\theoremstyle{plain}
\newtheorem{proposition}{Proposition}[section]
\theoremstyle{plain}
\newtheorem{theorem}{Theorem}[section]
\numberwithin{equation}{section}	
\theoremstyle{plain}
\newtheorem{lemma}[theorem]{Lemma}
\theoremstyle{plain}
\newtheorem{corollary}{Corollary}[theorem]
\theoremstyle{definition}
\DeclarePairedDelimiter{\abs}{\lvert}{\rvert}
\DeclarePairedDelimiter{\norma}{\lVert}{\rVert}
\DeclareMathOperator{\defi}{def}
\DeclareMathOperator{\tr}{tr}
\DeclareMathOperator{\Id}{Id_\textit{n}}
\DeclareMathOperator{\diver}{div}
\newcommand{\numberset}{\mathbb}
\newcommand{\N}{\numberset{N}}			
\newcommand{\R}{\numberset{R}}			
\newcommand{\sfera}{\numberset{S}}		
\newcommand{\Haus}{\mathcal{H}}			
\newcommand{\past}{p^\ast}			
\newcommand{\loc}{{\rm loc}}
\newcommand{\stressu}{{a\!\left(\nabla u\right)}}
\newcommand{\stressv}{{a\!\left(\nabla v\right)}}
\def\Xint#1{\mathchoice
	{\XXint\displaystyle\textstyle{#1}}%
	{\XXint\textstyle\scriptstyle{#1}}%
	{\XXint\scriptstyle\scriptscriptstyle{#1}}%
	{\XXint\scriptscriptstyle\scriptscriptstyle{#1}}%
	\!\int}
\def\XXint#1#2#3{{\setbox0=\hbox{$#1{#2#3}{\int}$ }
		\vcenter{\hbox{$#2#3$ }}\kern-.6\wd0}}
\def\dashint{\Xint-}
\begin{document}
	
	\title[Stability of the critical~$p$-Laplace equation]{On the stability of the critical~$p$-Laplace equation}
	
	\author[Giulio Ciraolo]{Giulio Ciraolo \orcidlink{0000-0002-9308-0147}}
	\address[]{Giulio Ciraolo. Dipartimento di Matematica ‘Federigo Enriques’, Università degli Studi di Milano, Via Cesare Saldini 50, 20133, Milan, Italy}
	\email{giulio.ciraolo@unimi.it}
	\author[Michele Gatti]{Michele Gatti \orcidlink{0009-0002-6686-9684}}
	\address[]{Michele Gatti. Dipartimento di Matematica ‘Federigo Enriques’, Università degli Studi di Milano, Via Cesare Saldini 50, 20133, Milan, Italy}
	\email{michele.gatti1@unimi.it}
	
	\subjclass[2020]{Primary 35B33, 35B35, 35J92; Secondary 35B09}
	\date{\today}
	\dedicatory{}
	\keywords{Critical $p$-Laplace equation, quantitative estimates, quasilinear elliptic equations, stability}
	
	\begin{abstract}
		For~$1<p<n$, it is well-known that non-negative, energy weak solutions to~$\Delta_p u + u^{\past-1} =0$ in~$\R^n$ are completely classified. Moreover, due to a fundamental result by Struwe and its extensions, this classification is stable up to bubbling.
		
		In the present work, we investigate the stability of perturbations of the critical~$p$-Laplace equation for any~$1<p<n$, under a condition that prevents bubbling. In particular, we show that any solution~$u \in \mathcal{D}^{1,p}(\R^n)$ to such a perturbed equation must be quantitatively close to a bubble. This result generalizes a recent work by the first author, together with Figalli and Maggi~\cite{cfm}, in which a sharp quantitative estimate was established for~$p=2$. However, our analysis differs completely from theirs and is based on a quantitative~$P$-function approach.
	\end{abstract}
	
	\maketitle
	
	
	\section{Introduction}
	
	For~$n \in \N$ and~$1<p<n$, the critical~$p$-Laplace equation
	\begin{equation}
	\label{eq:eqcritica}
		\Delta_p u + u^{\past-1} =0 \quad \mbox{in } \R^n
	\end{equation}
	arises as the Euler-Lagrange equation associated to the problem of finding the optimal
	constant in the Sobolev inequality, that is
	\begin{equation}
	\label{eq:sob-const}
		S_p = S \coloneqq \min_{u \in \mathcal{D}^{1,p}(\R^n) \setminus \{ 0 \}} \frac{ \norma*{\nabla u}_{L^p(\R^n)}}{\norma*{u}_{L^{\past}\!(\R^n)}}.
	\end{equation}
	This constant is attained by the functions belonging to the~$(n+2)$-dimensional manifold of the \textit{Talenti bubbles}
	\begin{equation*}
		\mathcal{M} \coloneqq \left\{U_{a,b,z} \,\Big\lvert\, U_{a,b,z}(x) \coloneqq a \left(1+b \,\abs*{x-z}^\frac{p}{p-1}\right)^{\!-\frac{n-p}{p}}, \; a \in \R\setminus\{0\},b>0,z \in \R^n\right\},
	\end{equation*}
	as shown by Aubin~\cite{au} and Talenti~\cite{tal}. Furthermore,~$\mathcal{M}$ coincides with the space of all weak solutions to
	\begin{equation*}
		\Delta_p u + S^p \,\norma*{u}_{L^{\past}\!(\R^n)}^{p-\past} \,\abs*{u}^{\past-2} u =0 \quad \mbox{in } \R^n
	\end{equation*}
	which do not change sign. Among these functions, a significant role is played by the $p$-\textit{bubbles}, defined as
	\begin{equation}
	\label{eq:pbubb}
		U_p[z,\lambda] (x) \coloneqq \left( \frac{\lambda^\frac{1}{p-1} \, n^\frac{1}{p} \left(\frac{n-p}{p-1}\right)^{\!\!\frac{p-1}{p}}}{\lambda^\frac{p}{p-1}+\abs*{x-z}^\frac{p}{p-1}} \right)^{\!\!\frac{n-p}{p}} \!,
	\end{equation}
	where~$\lambda>0$ and~$z \in \R^n$ are the scaling and translation parameters, respectively, and they satisfy~\eqref{eq:eqcritica}. Moreover, we notice that any~$p$-bubble~$U=U_p[z,\lambda]$ satisfies
	\begin{equation}
	\label{eq:energ-bubb}
		\norma{U}_{L^{\past}\!(\R^n)}^{\past} = \norma{\nabla U}_{L^{p}(\R^n)}^p = S^n.
	\end{equation}

	In the case~$p=2$, the seminal works of Gidas, Ni \& Nirenberg~\cite{gnn}, Caffarelli, Gidas \& Spruck~\cite{cgs}, and Chen \& Li~\cite{cl}, established that these functions are the only positive classical solutions to~\eqref{eq:eqcritica}.
	
	Moreover, for~$1<p<n$, it is known from the more recent works of Damascelli, Merch\'an, Montoro \& Sciunzi~\cite{dm}, V\'etois~\cite{vet}, and Sciunzi~\cite{sciu} that the~$p$-bubbles~\eqref{eq:pbubb} are the only positive weak solutions to~\eqref{eq:eqcritica} in the energy space~$\mathcal{D}^{1,p}(\R^n)$, where
	\begin{equation*}
		\mathcal{D}^{1,p}(\R^n) \coloneqq \{ u \in L^{\past}\!(\R^n) \,\lvert\, \nabla u \in L^p(\R^n) \}.
	\end{equation*}
	See also~\cite{cfr} for the classification in an anisotropic setting. Finally, we also mention that this classification result has recently been extended without the energy assumption for certain values of~$p$ depending on~$n$ -- see~\cite{catino,ou,vet-plap}. \newline

	A related question to the study of the optimal constant in~\eqref{eq:sob-const} is the stability of the Sobolev inequality
	\begin{equation}
	\label{eq:sobolev-ineq}
		\norma*{u}_{L^{\past}\!(\R^n)} \leq S^{-1} \norma*{\nabla u}_{L^p(\R^n)} \quad \mbox{for every } u \in \mathcal{D}^{1,p}(\R^n),
	\end{equation}
	which is naturally associated with the~$p$-\textit{Sobolev deficit}
	\begin{equation}
	\label{eq:Sob-def}
		\delta_{Sob}(u) \coloneqq \frac{ \norma*{\nabla u}_{L^p(\R^n)}}{\norma*{u}_{L^{\past}\!(\R^n)}} - S \quad \mbox{for } u \in \mathcal{D}^{1,p}(\R^n).
	\end{equation}
	Clearly,~$\delta_{Sob}(u) \geq 0$ and it vanishes only on the manifold of minimizers to the Sobolev quotient~\eqref{eq:sob-const}, i.e., on~$\mathcal{M}$.
	
	In Problem~A of~\cite{brezis-lieb}, Brezis \& Lieb asked whether, for~$p=2$ and a function~$u \in \mathcal{D}^{1,2}(\R^n)$, it is possible to control an appropriate distance between~$u$ and the manifold~$\mathcal{M}$ in terms of the~$2$-Sobolev deficit~\eqref{eq:Sob-def}. This question was affirmatively answered by Bianchi \& Egnell~\cite{bianchi-eg}, who proved that
	\begin{equation}
	\label{eq:bianchi-eg}
		c_n \inf_{v \in \mathcal{M}} \left(\frac{\norma*{\nabla u - \nabla v}_{L^2(\R^n)}}{\norma*{\nabla u}_{L^2(\R^n)}}\right)^{\!\!2} \leq \delta_{Sob}(u),
	\end{equation}
	for some dimensional constant~$c_n>0$. Moreover, the exponent at the left-hand side of~\eqref{eq:bianchi-eg} is sharp.
	
	The technique developed in~\cite{bianchi-eg} strongly relies on the Hilbert structure of~$\mathcal{D}^{1,2}(\R^n)$, making it unsuitable for addressing the problem in the general case~$1<p<n$. Consequently, the study of the stability issue for a general~$p$ has led to several significant contributions -- see, for instance,~\cite{cfmp,figalli-maggi-prat,figalli-neu,neumayer}. The final and sharp resolution of the problem, both in terms of the strength of the distance from~$\mathcal{M}$ and the optimal exponent, was provided by Figalli \& Zhang~\cite{figalli-zhang}. Specifically, they showed that
	\begin{equation*}
	\label{eq:est-fig-neu}
		c_{n,p} \inf_{v \in \mathcal{M}} \left(\frac{\norma*{\nabla u - \nabla v}_{L^{p}(\R^n)}}{\norma*{\nabla u}_{L^{p}(\R^n)}}\right)^{\!\max\left\{2,p\right\}} \leq \delta_{Sob}(u) \quad\mbox{for all } u \in \mathcal{D}^{1,p}(\R^n),
	\end{equation*}
	where the exponent is sharp, thereby fully settling the question of stability for~\eqref{eq:sobolev-ineq} à la Bianchi-Egnell. \newline
	

	Another relevant line of research concerns the stability issue for critical points of the Sobolev quotient~\eqref{eq:sob-const}. In the case~$p=2$, Struwe~\cite{struwe} established a qualitative stability result for~\eqref{eq:eqcritica}. In particular, he showed that if a non-negative~$u$ \textit{almost} solves~\eqref{eq:eqcritica}, then~$u$ is \textit{close} to the sum of $2$-bubbles in the~$\mathcal{D}^{1,2}$-norm. This phenomenon is known as \textit{bubbling}. We also mention that an analogous compactness result in~$\R^n$ was obtained by Benci \& Cerami~\cite{benci-cer}.

	From a PDEs perspective, this problem reduces to studying
	\begin{equation}
	\label{eq:maineq-bubb_p2}
		\Delta u + \kappa(x) u^{2^\ast -1} =0 \quad \mbox{in } \R^n,
	\end{equation}
	for~$\kappa$ close to a constant, which can be regarded as a perturbation of~\eqref{eq:eqcritica} when~$p=2$.

	A quantitative version of Struwe's theorem for \eqref{eq:maineq-bubb_p2} was studied by the first author, Figalli \& Maggi~\cite{cfm} under the a priori energy assumption
	\begin{equation}
	\label{eq:ipotesi-energ_p2}
		\frac{1}{2} S^n \leq \int_{\R^n} \,\abs*{\nabla u}^2 \, dx \leq \frac{3}{2} S^n.
	\end{equation}
	Recalling~\eqref{eq:energ-bubb}, we see that~\eqref{eq:ipotesi-energ_p2} ensures that the energy of~$u$ is nearly that of a single~$p$-bubble. Therefore,~\eqref{eq:ipotesi-energ_p2} is the main ingredient to prevent bubbling. A sharp study of this phenomenon was subsequently carried out by Figalli \& Glaudo~\cite{fg} and Deng, Sun \& Wei~\cite{dsw} by assuming the counterpart of~\eqref{eq:ipotesi-energ_p2} for multiple bubbles.
	
	In~\cite{cfm,dsw,fg}, the quantitative stability result is understood in the sense of~$\mathcal{D}^{1,2}(\R^n)$, thus providing estimates on the $L^2$-norm of the gradient of the difference between the solution and a suitable sum of $2$-bubbles in terms of some \textit{deficit}~$\defi(u,\kappa)$, which measures how close is~$\kappa$ to being constant. 

	The analyses conducted in~\cite{cfm,dsw,fg} mainly rely on the knowledge of the family~\eqref{eq:pbubb}, along with their spectral properties, and the Hilbert structure of the energy space~$\mathcal{D}^{1,2}(\R^n)$. To the best of our knowledge, these aspects have so far hindered the extension of the techniques developed therein to the case~$p \neq 2$.
	
	For completeness, we also mention that the stability of the fractional Sobolev inequality in the space~$\dot{W}^{s,2}(\R^n)$, following the approach of~\cite{cfm,dsw,fg}, has been studied by Aryan~\cite{aryan} and De Nitti \& K\"onig~\cite{denit-kon}. Moreover, stability issues for more general semilinear equations and for non-energy solutions have been investigated by the authors, together with Cozzi, in~\cite{ccg}.
	

	\subsection{Main results}
	
	We now return to the stability problem for~\eqref{eq:eqcritica} in the general case~$1<p<n$ and present our main results, along with a rough outline of the proof.
	
	When~$1<p<n$, Alves~\cite{alves} and Mercuri \& Willem~\cite{merc-will} extended Struwe's result to the~$p$-Laplacian. Nevertheless, as mentioned above, its quantitative counterpart and the stability of critical points of the Sobolev inequality~\eqref{eq:sobolev-ineq} remain mostly unexplored. 
	
	In this case, in order to obtain quantitative stability results, one has to face 
	\begin{equation}
	\label{eq:maineq-bubb}
		\Delta_p u + \kappa(x) u^{\past-1} =0 \quad \mbox{in } \R^n,
	\end{equation}
	for $\kappa$ close to a constant, which can be seen as a perturbation of~\eqref{eq:eqcritica}, with the a priori energy assumption 	
	\begin{equation}
	\label{eq:ipotesi-energ}
		\frac{1}{2} S^n \leq \int_{\R^n} \,\abs*{\nabla u}^p \, dx \leq \frac{3}{2} S^n.
	\end{equation}
		
	In particular, when~$\kappa \equiv \kappa_0$, any solution~$u \in \mathcal{D}^{1,p}(\R^n)$ to~\eqref{eq:maineq-bubb} must be a Talenti bubble and, by testing the equation with~$u$, it follows that
	\begin{equation}
	\label{eq:defk0}
		\kappa_0=\kappa_0(u) \coloneqq \frac{\int_{\R^n} \kappa(x) u^{\past} dx}{\int_{\R^n} u^{\past} dx} = \frac{\int_{\R^n} \,\abs{\nabla u}^p \, dx}{\int_{\R^n} u^{\past} dx}.
	\end{equation}
	Thus, it is natural to measure the proximity of~$\kappa$ to~$\kappa_0$ using the deficit
	\begin{equation}
		\label{eq:def_cfm}
		\defi(u,\kappa) \coloneqq \norma*{\left(\kappa-\kappa_0\right)u^{\past-1}}_{L^{(\past)'}\!(\R^n)}.
	\end{equation}
	For further details regarding this choice, we refer to~\cite{cfm}.
	
	With this notation, the main result of~\cite{cfm} states that if~$p=2$ and~$u \in \mathcal{D}^{1,2}(\R^n)$ is a non-negative function satisfying~\eqref{eq:maineq-bubb}--\eqref{eq:ipotesi-energ}, then there exists a~$2$-bubble~$\mathcal{U}_0$ of the form~\eqref{eq:pbubb}, i.e.,~$\mathcal{U}_0=U_2[z,\lambda]$ for some~$z\in\R^n$ and~$\lambda >0$, such that
	\begin{equation*}
		\norma*{u-\mathcal{U}_0}_{\mathcal{D}^{1,2}(\R^n)} \leq c_n \defi(u,\kappa),
	\end{equation*}
	for a dimensional constant~$c_n>0$.
	
	
	In the present paper, we aim to establish a quantitative stability result for positive weak solutions~$u \in \mathcal{D}^{1,p}(\R^n)$ to~\eqref{eq:maineq-bubb}, under a condition that prevents bubbling. Our main contributions are the following. 
	
	\begin{theorem}
	\label{th:main-bubbles}
		Let~$n \in \N$,~$1<p<n$, and~$\kappa \in L^\infty(\R^n) \cap C^{1,1}_{\loc}(\R^n)$ a positive function. Let~$u \in \mathcal{D}^{1,p}(\R^n)$ be a positive weak solution to~\eqref{eq:maineq-bubb} satisfying~\eqref{eq:ipotesi-energ}. Moreover, suppose that
		\begin{equation*}
			\kappa_0(u)=1,
		\end{equation*}
		where~$\kappa_0(u)$ is defined in~\eqref{eq:defk0}.
		
		Then, there exist a large constant~$C \geq 1$, a small~$\vartheta \in (0,1)$, and a~$p$-bubble~$\mathcal{U}_0$, of the form~\eqref{eq:pbubb}, such that
		\begin{equation}
		\label{eq:close-toabub}
			\norma*{u-\mathcal{U}_0}_{\mathcal{D}^{1,p}(\R^n)} \leq C \defi(u,\kappa)^{\vartheta}.
		\end{equation}
		The constant~$C$ depends only on~$n$,~$p$, and~$\norma*{\kappa}_{L^\infty(\R^n)}$, while~$\vartheta$ depends only on~$n$ and~$p$.
	\end{theorem}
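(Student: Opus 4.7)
The plan is to combine a \emph{qualitative} closeness input with a \emph{quantitative} $P$-function argument adapted to the critical $p$-Laplace structure, thereby bypassing the Hilbert-space and spectral machinery used in~\cite{cfm,fg,dsw} for $p=2$. As a starting point, the $p$-Laplacian version of Struwe's theorem due to Alves~\cite{alves} and Mercuri--Willem~\cite{merc-will}, combined with the energy bound~\eqref{eq:ipotesi-energ} that rules out multi-bubble concentration, guarantees that if $\defi(u,\kappa)$ is small then $u$ is $o(1)$-close in $\mathcal{D}^{1,p}(\R^n)$ to a single $p$-bubble. Using the translation and dilation invariances of~\eqref{eq:eqcritica}, I may normalize so that the approximating bubble is the standard $\mathcal{U}_0 = U_p[0,1]$; the whole task is then to promote this qualitative closeness to the polynomial rate~\eqref{eq:close-toabub}.

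The second step is to transfer the $\mathcal{D}^{1,p}$-closeness just obtained into \emph{pointwise} closeness between $u$ and $\mathcal{U}_0$. Serrin-type $L^\infty$ bounds, the DiBenedetto--Tolksdorf $C^{1,\alpha}$ theory, and the $C^{1,1}_{\loc}$-hypothesis on $\kappa$ furnish uniform $C^{1,\alpha}$ estimates on $u$ on compact sets, while comparison with bubble-shaped barriers yields the correct decay at infinity. This pointwise control substitutes for the spectral and $L^2$-orthogonality tools available only when $p=2$, and is the workhorse of the subsequent analysis.

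The heart of the proof is a quantitative $P$-function inequality. For any $p$-bubble $U$ one has an exact differential identity $\diver(X(U)) = 0$ of Pohozaev/Serrin type, where $X$ is a vector field built from $\nabla U$, $U$ and $x$, together with a non-negative auxiliary functional $P(U)$ that vanishes precisely on $\mathcal{M}$ and characterizes bubbles among solutions to~\eqref{eq:eqcritica}. Running the same computation on a solution $u$ of~\eqref{eq:maineq-bubb} produces instead
\[
	\diver\bigl(X(u)\bigr) + P(u) = R(u,\kappa),
\]
with a remainder $R$ pointwise dominated by $|\kappa - \kappa_0|\, u^{\past - 1} |\nabla u|$ against bubble-type weights. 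Integrating against a cut-off adapted to $\mathcal{U}_0$ and inserting the pointwise bounds from the previous step, one controls $\int_{\R^n} P(u)\,\omega\, dx$ by a positive power of $\defi(u,\kappa)$.

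Finally, a weighted Hardy--Poincar\'e inequality around $\mathcal{U}_0$ is used to convert this integrated smallness of $P(u)$ into the $\mathcal{D}^{1,p}$-bound~\eqref{eq:close-toabub}. The tangent directions to $\mathcal{M}$ (zero modes) are neutralized by selecting $\mathcal{U}_0$ as a minimizer of the $\mathcal{D}^{1,p}$-distance from $u$ to $\mathcal{M}$, which imposes the natural orthogonality conditions; the exponent $\vartheta \in (0,1)$ then emerges by interpolating the weighted integrated bound against the uniform $C^{1,\alpha}$ estimate, with a loss that precludes $\vartheta = 1$. I expect the main technical obstacle to be precisely this last coercivity step: spherical-harmonic diagonalization is unavailable for $p\neq 2$, so a new weighted Hardy--Poincar\'e inequality for the linearization of $P$ at $\mathcal{U}_0$ must be proved directly in the degenerate/singular range $1 < p < n$, with constants that do not depend on the parameters of $\mathcal{U}_0$.
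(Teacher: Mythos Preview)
Your first two steps (Struwe--type compactness plus quantitative decay and $C^{1,\alpha}$ bounds) match the paper's Steps~1--2, and the idea of a $P$-function identity with a remainder controlled by $\defi(u,\kappa)$ is in the right spirit. But from there the two arguments diverge, and your final step is precisely the one the paper works hard to \emph{avoid}.

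You propose to close the argument by a weighted Hardy--Poincar\'e inequality around $\mathcal{U}_0$, with orthogonality conditions eliminating the tangent directions to $\mathcal{M}$. This is exactly the linearization/spectral route of~\cite{cfm,fg,dsw}, and you yourself flag it as the main obstacle: for $p\neq 2$ the linearized operator is degenerate or singular on $\mathcal{Z}_{\mathcal{U}_0}$, no spherical-harmonic diagonalization is available, and there is no known coercivity statement of this type with constants independent of the bubble. Saying that such an inequality ``must be proved directly'' is not a proof plan; it is the whole difficulty.

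The paper's substitute is constructive rather than functional-analytic. It passes to $v\coloneqq u^{-p/(n-p)}$, for which the $P$-function of~\eqref{eq:pfunct-intro} is \emph{constant} on bubbles (not zero), and derives via an approximation argument the integral inequality of Proposition~\ref{prop:fund-ineq}. The output is not smallness of some scalar $P(u)$ but of the weighted $L^2$-norm of the \emph{traceless stress tensor} $\mathring{W}=\mathring{\nabla}\,a(\nabla v)$, together with a weighted bound on $\nabla P$. From $\mathring{W}$ small one gets, by an ordinary Poincar\'e inequality on a large ball (no bubble-adapted Hardy inequality needed), that $\nabla a(\nabla v)$ is close to $\frac{\overline{P}}{n}\,\mathrm{Id}$. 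The paper then \emph{explicitly builds} a $p$-paraboloid $\mathcal{Q}$ with this prescribed stress field and matches the zeroth-order term using the $C^{1,\alpha}$ estimate at the maximum point $x_0$; inverting $\mathcal{Q}$ through~\eqref{eq:defv} produces a genuine $p$-bubble, and the $\mathcal{D}^{1,p}$-estimate follows by direct computation plus a tail bound. No kernel has to be quotiented out, because the approximating bubble is manufactured from the data $(\overline{P},x_0)$ rather than selected by a minimization over $\mathcal{M}$.

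In short: your outline is correct up to the integrated $P$-function estimate, but the conversion step you propose is the known bottleneck for $p\neq 2$. The missing idea is to trade the coercivity argument for control of $\mathring{W}$ and an explicit paraboloid construction in the $v$-variable.
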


	The general case can be reduced to~$\kappa_0(u)=1$ by scaling, as already noticed in~\cite{cfm}. Thus, we have the following corollary.
	
	\begin{corollary}
	\label{cor:main-bubbles}
		Let~$n \in \N$ be an integer,~$1<p<n$, and~$\kappa \in L^\infty(\R^n) \cap C^{1,1}_{\loc}(\R^n)$ a positive function. Let~$u \in \mathcal{D}^{1,p}(\R^n)$ be a positive weak solution to~\eqref{eq:maineq-bubb} satisfying
		\begin{equation}
		\label{eq:ip-energ-k0}
			\frac{1}{2} \,\kappa_0(u)^{\frac{p}{p-\past}} S^n \leq \int_{\R^n} \,\abs*{\nabla u}^p \, dx \leq \frac{3}{2} \,\kappa_0(u)^{\frac{p}{p-\past}} S^n,
		\end{equation}
		where~$\kappa_0(u)$ is defined in~\eqref{eq:defk0}.
		
		Then, there exist a large constant~$C \geq 1$, a small~$\vartheta \in (0,1)$, and a function~$\mathcal{U} \in \mathcal{D}^{1,p}(\R^n)$ given by
		\begin{equation*}
			\mathcal{U} = \kappa_0(u)^{\frac{1}{p-\past}} \,\mathcal{U}_0,
		\end{equation*}
		for some~$p$-bubble~$\mathcal{U}_0$ of the form~\eqref{eq:pbubb}, such that
		\begin{equation*}
			\norma*{u-\mathcal{U}}_{\mathcal{D}^{1,p}(\R^n)} \leq C \defi(u,\kappa)^{\vartheta}.
		\end{equation*}
		The constant~$C$ depends only on~$n$,~$p$,~$\norma*{\kappa}_{L^\infty(\R^n)}$, and~$\kappa_0(u)$, while~$\vartheta$ depends only on~$n$ and~$p$.
	\end{corollary}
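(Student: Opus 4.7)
The plan is to reduce Corollary~\ref{cor:main-bubbles} to Theorem~\ref{th:main-bubbles} by a single amplitude rescaling, as the authors explicitly suggest in the sentence preceding the statement. The key observation is that the equation~\eqref{eq:maineq-bubb} is invariant under the transformation~$u \mapsto \alpha u$,~$\kappa \mapsto \alpha^{p-\past}\kappa$ for any~$\alpha>0$, as one sees by comparing the~$p$-homogeneity of~$\Delta_p$ with the~$(\past-1)$-homogeneity of the nonlinearity. This is exactly the freedom needed to normalize~$\kappa_0$ to~$1$.

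Concretely, I would set~$\alpha \coloneqq \kappa_0(u)^{-1/(p-\past)}$ and introduce~$\tilde{u} \coloneqq \alpha u$,~$\tilde{\kappa} \coloneqq \kappa/\kappa_0(u) = \alpha^{p-\past}\kappa$. The first task is to check that~$(\tilde{u},\tilde{\kappa})$ satisfies the hypotheses of Theorem~\ref{th:main-bubbles}: the function~$\tilde{u} \in \mathcal{D}^{1,p}(\R^n)$ is a positive weak solution of the rescaled equation;~$\tilde{\kappa}$ is positive and inherits from~$\kappa$ the~$L^\infty$-bound (now depending also on~$\kappa_0(u)$) and the~$C^{1,1}_{\loc}$-regularity; a direct computation gives~$\kappa_0(\tilde{u}) = \alpha^{p-\past}\kappa_0(u) = 1$; and the weighted energy assumption~\eqref{eq:ip-energ-k0} rescales into~\eqref{eq:ipotesi-energ} for~$\tilde{u}$, because~$\int_{\R^n}\abs*{\nabla\tilde{u}}^p\,dx = \alpha^p \int_{\R^n}\abs*{\nabla u}^p\,dx = \kappa_0(u)^{-p/(p-\past)}\int_{\R^n}\abs*{\nabla u}^p\,dx$. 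The weight~$\kappa_0(u)^{p/(p-\past)}$ appearing in~\eqref{eq:ip-energ-k0} is chosen precisely so that this identity lands the rescaled energy in the bracket~$[\tfrac{1}{2}S^n,\tfrac{3}{2}S^n]$.

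Applying Theorem~\ref{th:main-bubbles} to~$(\tilde{u},\tilde{\kappa})$ then produces a~$p$-bubble~$\mathcal{U}_0$ and constants~$C\geq 1$,~$\vartheta\in(0,1)$ such that~$\norma*{\tilde{u}-\mathcal{U}_0}_{\mathcal{D}^{1,p}(\R^n)} \leq C\,\defi(\tilde{u},\tilde{\kappa})^{\vartheta}$. I would conclude by defining~$\mathcal{U} \coloneqq \alpha^{-1}\mathcal{U}_0 = \kappa_0(u)^{1/(p-\past)}\mathcal{U}_0$, which matches the form prescribed by the statement, and using the scaling identities~$\norma*{u-\mathcal{U}}_{\mathcal{D}^{1,p}(\R^n)} = \alpha^{-1}\norma*{\tilde{u}-\mathcal{U}_0}_{\mathcal{D}^{1,p}(\R^n)}$ and~$\defi(\tilde{u},\tilde{\kappa}) = \alpha^{p-1}\defi(u,\kappa)$. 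The second identity follows by writing~$\tilde{\kappa}-1 = (\kappa-\kappa_0(u))/\kappa_0(u)$ and~$\tilde{u}^{\past-1} = \alpha^{\past-1}u^{\past-1}$, then using~$\kappa_0(u) = \alpha^{\past-p}$ to collapse the two powers of~$\alpha$ into~$\alpha^{p-1}$. Combining these gives the claimed estimate, with the multiplicative constant absorbing a power~$\alpha^{(p-1)\vartheta - 1}$ and therefore depending also on~$\kappa_0(u)$.

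There is no genuine analytic obstacle in this reduction, since the entire substantive work has been pushed into Theorem~\ref{th:main-bubbles}. The only point requiring some care is the bookkeeping of the scaling exponents, and in particular the verification that the factor~$\kappa_0(u)^{p/(p-\past)}$ in~\eqref{eq:ip-energ-k0} is the unique weight that makes the rescaled energy bracket coincide with the one in~\eqref{eq:ipotesi-energ}; any other normalization would either fail to match Theorem~\ref{th:main-bubbles} or force a spatial rescaling as well.
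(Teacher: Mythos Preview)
Your proposal is correct and follows essentially the same argument as the paper: the authors define $w \coloneqq \kappa_0^{1/(\past-p)} u$ (which is your $\tilde{u}=\alpha u$), verify that $w$ solves $\Delta_p w + \kappa_0^{-1}\kappa\, w^{\past-1}=0$ with $\kappa_0(w)=1$ and energy in $[\tfrac12 S^n,\tfrac32 S^n]$, apply Theorem~\ref{th:main-bubbles}, and then undo the scaling using the identity $\defi(w,\kappa_0^{-1}\kappa)=\kappa_0^{(p-1)/(\past-p)}\defi(u,\kappa)$, which is exactly your $\defi(\tilde u,\tilde\kappa)=\alpha^{p-1}\defi(u,\kappa)$. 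The resulting constant picks up the factor $\kappa_0^{((p-1)\vartheta-1)/(\past-p)}$, matching your $\alpha^{(p-1)\vartheta-1}$.
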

	
	While writing this paper we learned that, independently of us, Liu \& Zhang~\cite{liu-zhang} have proved an alternative result, which will appear in a forthcoming paper. Their approach, inspired by~\cite{figalli-zhang}, allows them to catch the sharp exponent in~\eqref{eq:close-toabub}.

	As far as we know, Theorem~\ref{th:main-bubbles} and Corollary~\ref{cor:main-bubbles}, as well as the main result in~\cite{liu-zhang}, are the first results establishing the quantitative closeness of a solution to~\eqref{eq:maineq-bubb}, viewed as a perturbation of the critical equation~\eqref{eq:eqcritica}, to a $p$-bubble. The exponent~$\vartheta$ in~\eqref{eq:close-toabub} is far from optimal. However, we believe that our approach is suitable for being used in more general contexts -- such as in the anisotropic setting.

	Our strategy to proving Theorem~\ref{th:main-bubbles} follows a more PDE-oriented strategy compared to that of~\cite{cfm}. Specifically, we employ a quantitative argument based on integral identities, similar to the one used in~\cite{cfr} as well as in~\cite{catino,ou,vet-plap}, all of which are inspired by the seminal work of Serrin \& Zou~\cite{serr-zou}. However, in our case, instead of directly applying an inequality for vector fields, we establish a different integral inequality tailored to our setting -- this is the content of Proposition~\ref{prop:fund-ineq} below. This new inequality is more in the spirit of those used by the first author, Farina \& Polvara~\cite{cami} and Ou \cite{ou}, and is obtained by exploiting a positive subsolution to a suitable PDE -- see Subsection~\ref{subsec:out-proof} below.
	
	In the following subsection, we describe the proof strategy in more detail.
	
	
	\subsection{Outline of the proof.}
	\label{subsec:out-proof}
	
	The proof begins by applying the result in~\cite{merc-will} to identify the~$p$-bubble close to~$u$, denoted by~$U$. We then reduce the problem to the case where~$U = U_p[0,1]$. This reduction is possible due to the symmetries of~\eqref{eq:eqcritica} and the family~\eqref{eq:pbubb} -- see Section~\ref{sec:symm} and Lemma~\ref{lem:sym} below. Once~$U$ is fixed, we establish upper and lower quantitative decay estimates for~$u$ and derive an upper bound for its gradient in~\ref{step:decay}. Additionally, we prove that there must be at least one point~$x_0 \in \R^n$ where~$u$ attains its maximum, which will be further localized later in the proof.
	
	Next, we introduce the fundamental auxiliary function~$v$ and the~$P$-function, defined as
	\begin{equation}
	\label{eq:pfunct-intro}
		v \coloneqq u^{-\frac{p}{n-p}} \quad\mbox{and}\quad P \coloneqq n \,\frac{p-1}{p} v^{-1} \,\abs*{\nabla v}^p + \left(\frac{p}{n-p}\right)^{\! p-1} v^{-1}.
	\end{equation}
	We also define the stress field associated with~$v$ by setting~$\stressv \coloneqq \abs*{\nabla v}^{p-2} \,\nabla v$, the tensor~$W \coloneqq \nabla \stressv$, and its traceless version~$\mathring{W}$.
	
	In~\ref{step:Pfunct}, we show that~$v$ is trapped between two~$p$-paraboloids, using terminology analogous to the case~$p=2$ -- see also Figure~\ref{fig:functions-E}. Thus, one heuristically expects~$v$ to be close to a~$p$-paraboloid. The key idea of the proof is to construct such an approximation for~$v$ and then translate this information back to~$u$.
	
	It is well-established that~$v$ satisfies an elliptic equation involving~$P$ and a remainder term -- see~\eqref{eq:eqforv} below. Moreover, this equation can be rewritten in terms of~$W$. Exploiting this formulation, we deduce an integral inequality involving~$P$ and~$W$ -- the content of Proposition~\ref{prop:fund-ineq} below. This result is obtained through the rather technical~\ref{step:differ-id} and~\ref{step:approx}. Notice that this is where the regularity of~$\kappa$ is required.
	
	From Proposition~\ref{prop:fund-ineq}, in~\ref{step:quant-est} and~\ref{step:fund-W}, we infer an integral weighted estimate for~$\abs{\mathring{W}}$ -- see~\eqref{eq:stima-def}. Notably, for a~$p$-bubble, the~$P$-function defined by~\eqref{eq:pfunct-intro} must be constant and
	\begin{equation*}
		W = \frac{P}{n} \Id.
	\end{equation*}
	Therefore, when~$u$ is close to a~$p$-bubble, we expect~$P$ to be approximately constant and, consequently, close to its mean on a small ball~$B_\mathsf{t}(x_0)$ -- recall that~$x_0$ is the point where~$u$ attains its maximum and~$v$ its minimum -- denoted by~$\overline{P}$.  This observation suggests that we should seek a weighted estimate for~$\abs{W-\mu\Id}$, where~$\mu \in \R$ is properly chosen.
	
	It turns out that such an estimate holds if we set~$\mu = \overline{P} / n$ -- see~\eqref{eq:int-to-prove} below. However, we are only able to establish it within a large ball~$B_r$, with~$r$ to be determined by the end of the proof.
	
	Using~\eqref{eq:int-to-prove}, in~\ref{step:approx-func}, we construct a~$p$-paraboloid~$\mathsf{Q}$ centered at~$x_0$, with~$v(x_0)$ as its value at the center, such that both~$v-\mathsf{Q}$ and~$\nabla v - \nabla \mathsf{Q}$ are small -- see estimates~\eqref{eq:norma-p-grad} and~\eqref{eq:norma-p-func}. Unfortunately, when going back to~$u$, the~$p$-paraboloid~$\mathsf{Q}$ does not yield a~$p$-bubble of the form~\eqref{eq:pbubb}.
	
	To fix this issue, we introduce a refined approximation. Specifically, we define a new~$p$-paraboloid~$\mathcal{Q}$, also centered at~$x_0$, ensuring that when inverted to recover~$u$, it precisely yields a~$p$-bubble of the form~\eqref{eq:pbubb}. In practice, this requires selecting a parameter~$\lambda$, which represents the bubble's scaling factor, to uniquely determine~$\mathcal{Q}$. We impose the condition~$\nabla \mathsf{Q} = \nabla \mathcal{Q}$, thereby fixing~$\lambda$. Moreover, it turns out that, up to a factor,~$\lambda = 1/\overline{P}$. Finally, we aim to keep~$v - \mathcal{Q}$ small.
	
	At first glance, it may seem that we have no remaining degrees of freedom to adjust~$\mathcal{Q}$. Nevertheless, this poses no issue since~$v$ and~$\mathsf{Q}$ are already comparable. To estimate~$v - \mathcal{Q}$, it therefore suffices to control~$\mathsf{Q} - \mathcal{Q}$. Moreover, since~$\mathsf{Q}$ and~$\mathcal{Q}$ coincide at the first order, their difference is dictated by their zeroth-order terms. This reduces the problem to comparing~$v(x_0)$ and~$\lambda$, which, up to a factor, amounts to comparing~$v(x_0)$ and~$1/\overline{P}$. Such a comparison is possible by choosing the radius~$\mathsf{t}$ sufficiently small -- see also Figure~\ref{fig:functions-E} for the heuristics.
	
	Once we establish that both~$v-\mathcal{Q}$ and~$\nabla v - \nabla \mathcal{Q}$ are small in the appropriate weighted norm, we return to~$u$ by defining the~$p$-bubble
	\begin{equation*}
		\mathcal{U} \coloneqq \mathcal{Q}^{-\frac{n-p}{p}}.
	\end{equation*}
	This allows us to conclude that~$\nabla u - \nabla \mathcal{U}$ is small in~$L^p(B_r)$. Finally, we select~$r$ such that the decay estimates for~$\nabla u$ and~$\nabla \mathcal{U}$ ensure that~$\norma*{\nabla u - \nabla \mathcal{U}}_{L^p(\R^n \setminus B_r)}$ is also small. The result then follows by reversing the initial reduction argument, with the aid of Lemma~\ref{lem:sym}.
	

	\subsection{Structure of the paper.}
	In Section~\ref{sec:symm}, we recall some symmetry properties of the equation~\eqref{eq:eqcritica} and prove  Corollary~\ref{cor:main-bubbles}. Section~\ref{sec:mainsec-proofofmainres} is dedicated to the proof of Theorem~\ref{th:main-bubbles}. In Section~\ref{sec:sharp-res} we present an example which provides insights on the optimal exponent. Finally, as an application, we derive a quasi-symmetry result for solutions to~\eqref{eq:maineq-bubb} in Section~\ref{sec:application}.

	
	\section{Symmetries of the problem and proof of Corollary~\ref{cor:main-bubbles}}
	\label{sec:symm}
	
	This brief section is inspired by Section~2.1 in~\cite{fg}, where the analogous properties for the case~$p=2$ are listed. Additionally, we provide the proof of Corollary~\ref{cor:main-bubbles}. \newline
	
	Given~$\lambda >0$ and~$z \in \R^n$, let~$T_{z,\lambda}: C^\infty_c(\R^n) \to C^\infty_c(\R^n)$ be the operator defined as
	\begin{equation*}
		T_{z,\lambda}(\phi)(x) \coloneqq \lambda^{\frac{n-p}{p}} \phi\left(\lambda(x-z)\right).
	\end{equation*}
	Clearly, this operator can be extended to functions which are non smooth with compact support in the same fashion.
	
	The following lemma provides some fundamental properties of the family of transformations~$T_{z,\lambda}$ which will be useful in the following.
	
	\begin{lemma}
		\label{lem:sym}
		The operator~$T_{z,\lambda}$ enjoys the subsequent properties.
		\begin{enumerate}[leftmargin=*,label=$(\arabic*)$]
			\item \label{it:cons-normapas} For any~$\phi \in C^\infty_c(\R^n)$, it holds that
			\begin{equation*}
				\int_{\R^n} T_{z,\lambda}(\phi)^{\past} dx = \int_{\R^n} \phi^{\past} dx.
			\end{equation*}
			\item \label{it:cons-normap} For any~$\phi \in C^\infty_c(\R^n)$, it holds that
			\begin{equation*}
				\int_{\R^n} \,\abs*{\nabla T_{z,\lambda}(\phi)}^{p} \, dx = \int_{\R^n} \,\abs*{\nabla \phi}^{p} \, dx.
			\end{equation*}
			\item \label{it:inver} For any~$\phi \in C^\infty_c(\R^n)$, it holds that
			\begin{equation*}
				T_{z,\lambda}\!\left(T_{-z\lambda,1/\lambda}(\phi)\right) = T_{-z\lambda,1/\lambda}\!\left(T_{z,\lambda}(\phi)\right) = \phi.
			\end{equation*}
			\item \label{it:stillbub} For any $p$-bubble~$U$,~$T_{z,\lambda} \!\left(U\right)$ is still a $p$-bubble.
			\item \label{it:transf-unitbub} The $p$-bubbles satisfy
			\begin{equation*}
				U_p[z,\lambda] = T_{z,1/\lambda} \!\left(U_p[0,1]\right) \quad\mbox{and}\quad U_p[0,1] = T_{-z/\lambda,\lambda} \!\left(U_p[z,\lambda]\right).
			\end{equation*}
		\end{enumerate}
	\end{lemma}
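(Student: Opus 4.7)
The whole lemma is a sequence of change-of-variable bookkeeping exercises. The unifying observation is that the operator $T_{z,\lambda}$ is a translation by $z$ followed by an isotropic dilation by $\lambda$, multiplied by the factor $\lambda^{(n-p)/p}$, which is precisely the critical Sobolev scaling exponent for $\mathcal{D}^{1,p}(\R^n)$. Every item in the statement ultimately reduces to explicit computation using the substitution $y = \lambda(x-z)$, whose Jacobian is $\lambda^n$.

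For \ref{it:cons-normapas}, I would just write
\begin{equation*}
    \int_{\R^n} T_{z,\lambda}(\phi)^{\past} dx = \lambda^{\frac{n-p}{p}\past} \!\int_{\R^n} \phi(\lambda(x-z))^{\past}\, dx = \lambda^{\frac{n-p}{p}\past - n} \!\int_{\R^n} \phi(y)^{\past}\, dy,
\end{equation*}
and note that $\frac{n-p}{p}\past = n$, which cancels the Jacobian. For \ref{it:cons-normap}, the chain rule produces an extra factor $\lambda$ inside each partial derivative, so the integrand picks up $\lambda^{(n-p) + p} = \lambda^n$, again cancelled by the change of variables. Property \ref{it:inver} is the direct verification
\begin{equation*}
    T_{z,\lambda}\!\left(T_{-z\lambda,1/\lambda}(\phi)\right)\!(x) = \lambda^{\frac{n-p}{p}} \cdot \lambda^{-\frac{n-p}{p}} \phi\!\left(\tfrac{\lambda(x-z)}{\lambda} + z\right) = \phi(x),
\end{equation*}
and similarly for the opposite composition.

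Property \ref{it:transf-unitbub} is an algebraic calculation: applying $T_{z,1/\lambda}$ to $U_p[0,1]$ and factoring $\lambda^{p/(p-1)}$ from the denominator of $U_p[0,1]((x-z)/\lambda)$ produces
\begin{equation*}
    T_{z,1/\lambda}(U_p[0,1])(x) = \left(\frac{\lambda^{\frac{1}{p-1}} \,n^{\frac{1}{p}} \!\left(\frac{n-p}{p-1}\right)^{\!\frac{p-1}{p}}}{\lambda^{\frac{p}{p-1}} + \abs{x-z}^{\frac{p}{p-1}}}\right)^{\!\frac{n-p}{p}} = U_p[z,\lambda].
\end{equation*}
The second identity in \ref{it:transf-unitbub} then follows from \ref{it:inver}. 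Finally, for \ref{it:stillbub}, the cleanest route is to first derive the composition rule
\begin{equation*}
    T_{z,\lambda} \circ T_{w,1/\mu} = T_{z + w/\lambda,\; \lambda/\mu},
\end{equation*}
which is another direct computation of the same type as \ref{it:inver}. Then any $p$-bubble $U_p[w,\mu]$ equals $T_{w,1/\mu}(U_p[0,1])$ by \ref{it:transf-unitbub}, hence
\begin{equation*}
    T_{z,\lambda}(U_p[w,\mu]) = T_{z+w/\lambda,\,\lambda/\mu}(U_p[0,1]) = U_p\!\left[z + w/\lambda,\, \mu/\lambda\right],
\end{equation*}
which is again a $p$-bubble. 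There is no real obstacle here; the only delicate point is tracking the $p/(p-1)$ exponent inside the denominator of the bubble when performing the factorization in \ref{it:transf-unitbub}, since this is where the $p$-Laplacian structure enters.
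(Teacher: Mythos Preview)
Your proposal is correct; each computation checks out, including the composition rule and the exponent bookkeeping in \ref{it:transf-unitbub}. The paper does not actually supply a proof of this lemma --- it is stated and immediately followed by the remark that the properties extend to non-smooth functions --- so your explicit change-of-variable verification is exactly the argument the authors implicitly rely on.
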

	Obviously all the properties of Lemma~\ref{lem:sym} hold also if the functions are non smooth with compact support, provided that the involved integrals are finite. \newline
	
	We observe that both the quantities~$\kappa_0(u)$ and~$\defi(u,\kappa)$, given in~\eqref{eq:defk0} and~\eqref{eq:def_cfm}, respectively, are invariant under the action of the operators~$T_{z,\lambda}$.
	
	More precisely, if~$u \in \mathcal{D}^{1,p}(\R^n)$ is a weak solution to~\eqref{eq:maineq-bubb} and~$v \coloneqq T_{z,\lambda}(u)$, then~$v \in \mathcal{D}^{1,p}(\R^n)$ clearly is a weak solution to
	\begin{equation*}
		\Delta_p v + \widehat{\kappa}(x) v^{\past-1} =0 \quad \mbox{in } \R^n
	\end{equation*}
	where~$\widehat{\kappa}(x) = \kappa(\lambda(x-z))$, moreover
	\begin{equation}
	\label{eq:inv-norma}
		\widehat{\kappa} \in L^\infty(\R^n) \quad\mbox{with}\quad \norma*{\widehat{\kappa}}_{L^\infty(\R^n)}=\norma*{\kappa}_{L^\infty(\R^n)},
	\end{equation}
	and also
	\begin{equation}
	\label{eq:inv-k}
		\kappa_0(u) = \kappa_0(v) \quad\mbox{and}\quad \defi(u,\kappa) = \defi(v,\widehat{\kappa}).
	\end{equation}

	The transformations~$T_{z,\lambda}$ play a central role in the study of the Sobolev inequality, as they preserve the two quantities~$\norma*{\cdot}_{L^{\past}\!(\R^n)}$ and~$\norma*{\nabla \cdot}_{L^{p}(\R^n)}$. In particular, we will use these symmetries to reduce the problem to the case where, instead of considering a generic~$p$-bubble, we can take the fundamental bubble~$U_p[0,1]$. \newline
	
	We conclude this section by providing the proof of Corollary~\ref{cor:main-bubbles}.

	\begin{proof}[Proof of Corollary~\ref{cor:main-bubbles}]
		The proof is a direct consequence of Theorem~\ref{th:main-bubbles} and a scaling argument. Let us set~$\kappa_0=\kappa_0(u)$ and
		\begin{equation*}
			w \coloneqq \kappa_0^{\frac{1}{\past-p}} \, u.
		\end{equation*}
		It is clear that~$w \in \mathcal{D}^{1,p}(\R^n)$ is a positive weak solution to
		\begin{equation*}
			\Delta_p w + \kappa_0^{-1} \kappa(x) w^{\past-1} =0 \quad \mbox{in } \R^n.
		\end{equation*}
		Furthermore, by~\eqref{eq:ip-energ-k0},~$w$ satisfies~\eqref{eq:ipotesi-energ} and, by~\eqref{eq:defk0}, also
		\begin{equation*}
			\kappa_0(w) = \frac{\int_{\R^n} \,\abs{\nabla w}^p \, dx}{\int_{\R^n} w^{\past} dx} = \kappa_0^{-1} \,\frac{\int_{\R^n} \,\abs{\nabla u}^p \, dx}{\int_{\R^n} u^{\past} dx} = 1.
		\end{equation*}
		By applying Theorem~\ref{th:main-bubbles}, we infer that there exist a large~$C \geq 1$, a small~$\vartheta \in (0,1)$, and a~$p$-bubble~$\mathcal{U}_0$, of the form~\eqref{eq:pbubb}, such that
		\begin{equation}
		\label{eq:D1p-to-multip}
			\norma*{w-\mathcal{U}_0}_{\mathcal{D}^{1,p}(\R^n)} \leq C \defi\!\left(w,\kappa_0^{-1}\kappa\right)^{\!\vartheta} \!,
		\end{equation}
		where the constant~$C$ depends only on~$n$,~$p$,~$\norma*{\kappa}_{L^\infty(\R^n)}$, and~$\kappa_0$, whereas~$\vartheta$ depends only on~$n$ and~$p$. Therefore, we define
		\begin{equation*}
			\mathcal{U} \coloneqq \kappa_0^{\frac{1}{p-\past}} \,\mathcal{U}_0 \in \mathcal{D}^{1.p}(\R^n)
		\end{equation*}
		and observe that
		\begin{equation*}
			\defi\!\left(w,\kappa_0^{-1}\kappa\right) = \kappa_0^{\frac{p-1}{\past-p}} \defi(u,\kappa).
		\end{equation*}
		Hence, multiplying~\eqref{eq:D1p-to-multip} by~$\kappa_0^{\frac{1}{p-\past}}$, we conclude that
		\begin{equation*}
			\norma*{u-\mathcal{U}}_{\mathcal{D}^{1,p}(\R^n)} \leq C \kappa_0^{\frac{(p-1)\vartheta-1}{\past-p}} \defi(u,\kappa)^{\vartheta}.
		\end{equation*}
	\end{proof}
	
	
	\section{Proof of Theorem~\ref{th:main-bubbles}}
	\label{sec:mainsec-proofofmainres}
	
	Since~$\kappa \in L^\infty(\R^n)$, it is well-known, by the results of Peral~\cite{peral-ictp}, Serrin~\cite{serr}, DiBenedetto~\cite{diben}, and Tolksdorf~\cite{tolk}, that any solution~$u \in \mathcal{D}^{1,p}(\R^n)$ of~\eqref{eq:maineq-bubb} actually satisfies
	\begin{equation}
	\label{eq:u-C1alpha}
		u \in W^{1,\infty}(\R^n) \cap C^{1,\alpha}_{\loc}(\R^n) \quad\mbox{for some } \alpha \in (0,1).
	\end{equation}
	Moreover, by V\'etois' Lemma~2.2 in~\cite{vet}, we also have that~$u \in L^{p_\ast-1,\infty}(\R^n)$, where~$p_\ast\coloneqq\frac{p(n-1)}{n-p}$. Hence, by interpolation,
	\begin{equation}
		\label{eq:integrabilita}
		u \in L^q(\R^n) \quad\mbox{for every } q \in \left(p_\ast-1,+\infty\right].
	\end{equation}
	Furthermore, Theorem~1.1 in~\cite{vet} provides a priori estimate for~$u$ and its gradient, namely 
	\begin{equation}
		\label{eq:est-vetois}
		u(x) \leq \frac{C_u}{1+\abs*{x}^\frac{n-p}{p-1}} \quad \mbox{and} \quad \abs*{\nabla u(x)} \leq \frac{C_u}{1+\abs*{x}^\frac{n-1}{p-1}} \quad \mbox{for every } x \in \R^n
	\end{equation}
	for some constant~$C_u>0$ depending on~$u$.
	
	Concerning the regularity of the solution~$u$ to~\eqref{eq:maineq-bubb}, further results have been established by Antonini, the fist author \& Farina~\cite{carlos} -- see also the references therein. Let us define the critical set of~$u$ as
	\begin{equation*}
		\mathcal{Z}_u \coloneqq \left\{x \in \R^n \,\lvert\, \nabla u(x)=0 \right\},
	\end{equation*}
	and the \textit{stress field} associated with~$u$ by
	\begin{equation*}
		\stressu \coloneqq \abs*{\nabla u}^{p-2} \,\nabla u,
	\end{equation*}
	which is extended to zero on~$\mathcal{Z}_u$. Then, the set~$\mathcal{Z}_u$ is negligible, i.e.~$\abs*{\mathcal{Z}_u}=0$, and
	\begin{gather}
		\notag
		\stressu \in W^{1,2}_{\loc}(\R^n), \\
		\label{eq:regu}
		u \in W^{2,2}_{\loc}(\R^n \setminus \mathcal{Z}_u) \quad \mbox{and} \quad \abs*{\nabla u}^{p-2} \,\nabla^2 u \in L^{2}_{\loc}(\R^n \setminus \mathcal{Z}_u) \quad \mbox{for every } 1<p<n, \\
		\notag
		u \in W^{2,2}_{\loc}(\R^n) \quad \mbox{and} \quad \abs*{\nabla u}^{p-2} \,\nabla^2 u \in L^{2}_{\loc}(\R^n) \quad \mbox{for every } 1<p \leq 2.
	\end{gather}
	Finally, as~$\kappa \in C^{1,1}_{\loc}(\R^n)$, by Theorem~6.4 on page~284 of  Ladyzhenskaya \& Ural'tseva~\cite{lad-ur}, we have
	\begin{equation}
	\label{eq:u-C3alpha}
		u \in C^{3,\alpha}_{\loc} \!\left(\R^n \setminus \mathcal{Z}_u\right).
	\end{equation}
	
	Of course, it suffices to prove the theorem under the assumption that
	\begin{equation}
	\label{eq:def-small}
		\defi(u,\kappa) \leq \gamma,
	\end{equation}
	for some small~$\gamma \in (0,1)$, depending only on~$n$,~$p$, and~$\norma*{\kappa}_{L^\infty(\R^n)}$. Indeed, if~$\defi(u,\kappa) > \gamma$, then it follows that
	\begin{equation*}
		\norma*{u-\mathcal{U}_0}_{\mathcal{D}^{1,p}(\R^n)} \leq \norma*{\nabla u}_{L^{p}(\R^n)} + \norma*{\nabla \mathcal{U}_0}_{L^{p}(\R^n)} \leq 4 S^{\frac{n}{p}} \leq \frac{4 S^{\frac{n}{p}}}{\gamma} \defi(u,\kappa)
	\end{equation*}
	for any~$p$-bubble~$\mathcal{U}_0$, using~\eqref{eq:energ-bubb} and~\eqref{eq:ipotesi-energ}. Therefore, in what follows, we will assume that~\eqref{eq:def-small} is in force. \newline
	
	To improve readability, we divide the proof into several steps.
	
	\renewcommand\thesubsection{\bfseries Step \arabic{subsection}}
	
	
	\subsection{Application of the Struwe-type result and reduction.}
	\label{step:struwe}
	
	For any~$\epsilon>0$, by the Struwe-type result of Mercuri \& Willem~\cite{merc-will} -- see also~\cite[Theorem~2]{alves} for the case~$p>2$ -- and considering~\eqref{eq:ipotesi-energ}, there exits a~$\delta>0$, depending on~$\epsilon$, and a $p$-bubble~$U_\epsilon \coloneqq U_p[z_\epsilon,\lambda_\epsilon]$, for a couple of parameters~$z_\epsilon \in \R^n$ and~$\lambda_\epsilon>0$, such that if
	\begin{equation}
	\label{eq:delta-to-fix}
		\defi(u,\kappa) \leq \delta,
	\end{equation}
	then
	\begin{equation}
		\label{eq:struwe-0}
		\norma*{\nabla \!\left(u-U_\epsilon\right)}_{\mathcal{D}^{1,p}(\R^n)} \leq \epsilon.
	\end{equation}
	We now scale and translate our functions in order to lead~$U_\epsilon$ to be the~$p$-bubble with center the origin and unit scale factor. Taking into account point~\ref{it:transf-unitbub} of Lemma~\ref{lem:sym}, we set
	\begin{equation*}
		U \coloneqq U_p[0,1]=T_{-z_\epsilon/\lambda_\epsilon,\lambda_\epsilon} \!\left(U_\epsilon\right)
	\end{equation*}
	and define
	\begin{equation*}
		u_\epsilon \coloneqq T_{-z_\epsilon/\lambda_\epsilon,\lambda_\epsilon} \!\left(u\right).
	\end{equation*}
	Therefore, by point~\ref{it:cons-normap} of Lemma~\ref{lem:sym} and~\eqref{eq:struwe-0}, it follows that
	\begin{equation}
		\label{eq:struwe}
		\norma*{\nabla \!\left(u_\epsilon-U\right)}_{\mathcal{D}^{1,p}(\R^n)} \leq \epsilon.
	\end{equation}
	As noticed above,~$u_\epsilon$ is a weak solution to~\eqref{eq:maineq-bubb} for a different~$\kappa$ satisfying~\eqref{eq:inv-norma}--\eqref{eq:inv-k} and~$u_\epsilon$ enjoys the regularity properties listed in~\eqref{eq:regu}.
	
	Since, according to points~\ref{it:cons-normapas}--\ref{it:cons-normap} of Lemma~\ref{lem:sym} and~\eqref{eq:inv-norma}--\eqref{eq:inv-k}, all the relevant quantities are invariant, we will omit the subscript in what follows -- writing therefore~$u$ instead of~$u_\epsilon$. The subscript will be restored when needed in~\ref{step:conclusion}.
	
	\renewcommand\thesubsection{\bfseries Step \arabic{subsection}}
	
	
	\subsection{Sharp decay estimates in quantitative form.}
	\label{step:decay}
	
	We claim that, if~$\epsilon$ is properly selected, there exist two constants~$c_0,C_0>0$, depending only on~$n$,~$p$, and~$\norma*{\kappa}_{L^\infty(\R^n)}$, such that
	\begin{equation}
		\label{eq:bounds-u}
		\frac{c_0}{1+\abs*{x}^{\frac{n-p}{p-1}}} \leq u(x) \leq \frac{C_0}{1+\abs*{x}^\frac{n-p}{p-1}} \quad\mbox{for every } x \in \R^n.
	\end{equation}
	Furthermore, there exists a constant~$C_1 \geq 1$, depending only on~$n$,~$p$, and~$\norma*{\kappa}_{L^\infty(\R^n)}$, such that
	\begin{equation}
		\label{eq:bound-gradu}
		\abs*{\nabla u(x)} \leq \frac{C_1}{1+\abs*{x}^\frac{n-1}{p-1}} \quad\mbox{for every } x \in \R^n.
	\end{equation}
	Both the upper and lower bounds in~\eqref{eq:bounds-u} are essentially a consequence of~\eqref{eq:struwe} and the fact that~$u$ solves~\eqref{eq:maineq-bubb}. The upper bound is obtained via a quantification the proofs of Lemma~3.1 and Theorem~1.1 in~\cite{vet}. The lower bound is a consequence of this upper bound via the quantitative argument of Lemma~5.2 in~\cite{plap}.
	
	We start by proving a weaker version of the upper bound in~\eqref{eq:bounds-u} which is needed to reach the final sharp estimate. In particular, we first show that
	\begin{equation}
		\label{eq:ub-1}
		u(x) \leq C \abs*{x}^{\frac{p-n}{p}} \quad\mbox{for every } x \in \R^n \setminus B_{r_1},
	\end{equation}
	for some~$r_1>0$ and a constant~$C \geq 1$, depending only on~$n$,~$p$, and~$\norma*{\kappa}_{L^\infty(\R^n)}$, and where, from now on, we denote~$B_r \coloneqq B_r(0)$ for~$r>0$. By~\eqref{eq:struwe} and Sobolev inequality, we immediately get that
	\begin{equation*}
		\label{eq:normapast-esterno}
		\abs*{\norma*{u}_{L^{\past}\!(\R^n \setminus B_{r})} - \norma*{U}_{L^{\past}\!(\R^n \setminus B_{r})}} \leq \norma*{u-U}_{L^{\past}\!(\R^n \setminus B_{r})} \leq S^{-1} \epsilon,
	\end{equation*}
	which, in turn, implies
	\begin{equation}
		\label{eq:massau-infty}
		\int_{\R^n \setminus B_{r}} u^{\past} dx \leq 2^{\past-1} \int_{\R^n \setminus B_{r}} U^{\past} dx + 2^{\past-1} S^{-\past} \epsilon^{\past}
	\end{equation}
	for every~$r>0$. We now arbitrarily fix some~$\chi \in (0,1)$. Since~$U \in L^{\past}\!(\R^n)$, there exists a sufficiently large~$r_1>0$, depending only on~$n$ and~$p$, such that
	\begin{equation*}
		2^{\past-1} \int_{\R^n \setminus B_{r}} U^{\past} dx  \leq \chi \,\frac{S^n}{4} \quad\mbox{for every } r \geq r_1.
	\end{equation*}
	By further requiring that
	\begin{equation*}
		\epsilon \leq \left(2^{1-\past} \chi \,\frac{S^{n+\past}}{4}\right)^{\!\frac{1}{\past}} \eqqcolon \epsilon_0,
	\end{equation*}
	we infer from~\eqref{eq:massau-infty} that
	\begin{equation}
		\label{eq:boud-massa-ext}
		\int_{\R^n \setminus B_{r}} u^{\past} dx \leq \chi \,\frac{S^n}{2} \quad\mbox{for every } r \geq r_1.
	\end{equation}
	We can now conclude that~\eqref{eq:ub-1} holds true by Lemma~3.1 in~\cite{vet}.
	
	As second purpose, we shall prove a universal upper bound for~$u$. By~\eqref{eq:integrabilita}, we know that~$u \in L^\infty(\R^n)$. Moreover, we have already shown that~$u$ must decay at infinity, thus there exists a point~$x_0 \in \R^n$ where~$u$ attains its maximum. We now claim that
	\begin{equation}
		\label{eq:bound-Linf-univ}
		\norma*{u}_{L^\infty(\R^n)} = u(x_0) \leq \mathscr{M}
	\end{equation}
	for some~$\mathscr{M} \geq 1$ depending only on~$n$,~$p$, and~$\norma*{\kappa}_{L^\infty(\R^n)}$. Our strategy to prove~\eqref{eq:bound-Linf-univ} is that of exploiting~\eqref{eq:struwe} and the proof of Theorem~E.0.20 in~\cite{peral-ictp} in order to derive a universal upper bound for a suitably large Lebesgue norm of~$u$. Ultimately, we will deduce~\eqref{eq:bound-Linf-univ} by applying Theorem~1 in~\cite{serr}. For the sake of completeness we reproduce the full argument in~\cite{peral-ictp}.
	
	To this end, let us define, for~$k \geq 0$, the functions~$F_k, G_k: [0,+\infty) \to \R$ by
	\begin{equation*}
		F_k(t) \coloneqq
		\begin{dcases}
			t^\beta                                 		& \quad \mbox{if } t \leq k, \\
			\beta k^{\beta-1} \left(t-k\right) +k^\beta		& \quad \mbox{if } t > k, \\
		\end{dcases}
	\end{equation*}
	and
	\begin{equation*}
		G_k(t) \coloneqq
		\begin{dcases}
			t^{\left(\beta-1\right)p+1}   & \quad \mbox{if } t \leq k, \\
			\left(\left(\beta-1\right)p+1 \right)\beta \, k^{\left(\beta-1\right)p} \left(t-k\right)+k^{\left(\beta-1\right)p+1}    & \quad \mbox{if } t > k,
		\end{dcases}
	\end{equation*}
	for some~$\beta >1$ to be determined soon. Note that~$F_k,G_k \in C^{0,1}([0,+\infty))$, moreover one can check that the following inequalities hold true
	\begin{align}
		\label{eq:rel1}
		G_k(t) &\leq t G_k'(t), \\
		\label{eq:rel2}
		c_{\beta,p} \left(F_k'(t)\right)^p &\leq G_k'(t), \\
		\label{eq:rel3}
		t^{p-1} G_k(t) &\leq C_{\beta,p} \left(F_k(t)\right)^p \!,
	\end{align}
	for a couple of constants~$c_{\beta,p},C_{\beta,p}>0$ depending only on~$\beta$ and~$p$. Fix~$\beta >1$, depending on~$n$ and~$p$, such that~$\beta p < \past$, and consider
	\begin{equation*}
		\xi_1 \coloneqq \eta^p G_k(u),
	\end{equation*}
	where~$\eta \in C^\infty_c(\R^n)$ will be chosen later. We can take~$\xi_1$ as a test function in~\eqref{eq:maineq-bubb}, so that
	\begin{align}
		\label{eq:test-1}
			\int_{\R^n} &\,\abs*{\nabla u}^{p-2}  \left\langle \nabla u, \nabla\xi_1 \right\rangle dx \\
		\notag
			&= \int_{\R^n} p \,\abs*{\nabla u}^{p-2}  \eta^{p-1} G_k(u) \left\langle \nabla u, \nabla\eta \right\rangle + \abs*{\nabla u}^{p} \eta^p G'_k(u) \, dx = \int_{\R^n} \kappa u^{\past-1} \eta^p G_k(u) \, dx.
	\end{align}
	By exploiting~\eqref{eq:rel1} and Young's inequality we get
	\begin{equation*}
		\begin{split}
			&\abs*{\int_{\R^n} p \,\abs*{\nabla u}^{p-2}  \eta^{p-1} G_k(u) \left\langle \nabla u, \nabla\eta \right\rangle dx} \\
			&\quad\leq p \int_{\R^n} \,\abs{\nabla u}^{p-1} \eta^{p-1} (G_k'(u))^{1/p'} \, \left(u^{p-1}G_k(u)\right)^{\!1/p} \abs{\nabla \eta} \, dx \\
			&\quad\leq \left(p-1\right)\sigma \int_{\R^n} \,\abs{\nabla u}^{p} \eta^{p} G_k'(u) \, dx + \frac{1}{\sigma} \int_{\R^n} u^{p-1} G_k(u) \,\abs{\nabla \eta}^p \, dx.
		\end{split}
	\end{equation*}
	By taking~$\sigma=\left(2\left(p-1\right)\right)^{-1}$, the latter, together with~\eqref{eq:test-1}, yields
	\begin{equation}
		\label{eq:test-2}
		\begin{split}
			\int_{\R^n} &\,\abs{\nabla u}^p \eta^p G_k'(u) \, dx\\
			&\leq 4\left(p-1\right) \int_{\R^n} u^{p-1} G_k(u) \abs{\nabla \eta}^p \, dx + 2\norma*{\kappa}_{L^\infty(\R^n)} \int_{\R^n} u^{\past-1} G_k(u) \eta^p \, dx \\
			&\leq 4C_{\beta,p}\left(p-1\right) \int_{\R^n} F_k^p(u) \abs{\nabla \eta}^p \, dx + 2C_{\beta,p} \norma*{\kappa}_{L^\infty(\R^n)} \int_{\R^n} u^{\past-p} F_k^p(u) \eta^p \, dx,
		\end{split} 
	\end{equation}
	where we used~\eqref{eq:rel3} in the last line. Taking advantage of~\eqref{eq:rel2}, we can estimate the left-hand side of~\eqref{eq:test-2} getting
	\begin{align*}
		c_{\beta,p} \int_{\R^n} &\,\abs{\nabla u}^p \eta^p \left(F_k'(u)\right)^p \, dx\\
		&\leq 4C_{\beta,p}\left(p-1\right) \int_{\R^n} F_k^p(u) \abs{\nabla \eta}^p \, dx + 2C_{\beta,p} \norma*{\kappa}_{L^\infty(\R^n)} \int_{\R^n} u^{\past-p} F_k^p(u) \eta^p \, dx,
	\end{align*}
	which, in turn, implies
	\begin{equation*}
		\int_{\R^n} \,\abs{\nabla \left(\eta F_k(u)\right)}^p \, dx \leq C_2 \int_{\R^n} F_k^p(u) \abs{\nabla \eta}^p \, dx + C_3 \int_{\R^n} u^{\past-p} F_k^p(u) \eta^p \, dx
	\end{equation*}
	for some~$C_2>0$, depending only on~$n$ and~$p$, and with~$C_3 \coloneqq 2 \frac{C_{\beta,p}}{c_{\beta,p}} \norma*{\kappa}_{L^\infty(\R^n)}$.
	By Sobolev inequality we finally deduce
	\begin{equation}
		\label{eq:Fk-past}
		\left(\int_{\R^n} F_k^{\past}\!(u) \eta^{\past} dx \right)^{\!\frac{p}{\past}} \leq S^{-p}  C_2 \int_{\R^n} F_k^p(u) \abs{\nabla \eta}^p \, dx + S^{-p} C_3 \int_{\R^n} u^{\past-p} F_k^p(u) \eta^p \, dx.
	\end{equation}
	We now choose~$\eta \in C^\infty_c(\R^n)$ such that~$\mathrm{supp} \,\eta = B_{2 r_2}(x_0)$, for some universal~$r_2>0$ to be determined in such a way that
	\begin{equation}
		\label{eq:r2-tba}
		\norma*{u}^{\past-p}_{L^{\past}\!(B_{2 r_2}(x_0))} \leq \frac{S^p}{2C_3}.
	\end{equation}
	To prove the existence of an~$r_2>0$ for which~\eqref{eq:r2-tba} holds true, we argue as above to deduce
	\begin{equation*}
		\norma*{u}^{\past-p}_{L^{\past}\!(B_{r}(x_0))} \leq 2^{\max\left\{0,\past-p-1\right\}} \left(\norma*{U}^{\past-p}_{L^{\past}\!(B_{r}(x_0))} +S^{p-\past} \epsilon^{\past-p}\right) \quad\mbox{for every } r>0.
	\end{equation*}
	We shall choose~$r>0$ in such a way that
	\begin{equation}
		\label{eq:stima-Upast-r}
		\norma*{u}^{\past-p}_{L^{\past}\!(B_{r}(x_0))} \leq 2^{\max\left\{0,\past-p-1\right\}} \norma*{U}^{\past-p}_{L^{\past}\!(B_{r}(x_0))} \leq \frac{S^p}{4C_3},
	\end{equation}
	and
	\begin{equation*}
		\epsilon \leq \left(\frac{S^{\past}}{2^{\max\left\{2,\past-p+1\right\}} \, C_3}\right)^{\!\frac{1}{\past-p}} \eqqcolon \epsilon_1.
	\end{equation*}
	Since we can determine an~$r_2>0$, depending only on~$n$,~$p$, and~$\norma*{\kappa}_{L^\infty(\R^n)}$, such that
	\begin{equation*}
		\norma*{U}^{\past-p}_{L^{\past}\!(B_{2r_2}(x_0))} \leq \norma*{U}^{\past-p}_{L^{\past}\!(B_{2r_2})} \leq \frac{S^p}{ 2^{\max\left\{2,\past-p+1\right\}} \, C_3},
	\end{equation*}
	the second inequality in~\eqref{eq:stima-Upast-r} holds, and we conclude that~\eqref{eq:r2-tba} is verified. By H\"older inequality and~\eqref{eq:r2-tba}, we finally infer form~\eqref{eq:Fk-past} that
	\begin{equation*}
		\left(\int_{\R^n} F_k^{\past}\!(u) \eta^{\past} dx \right)^{\!\frac{p}{\past}} \leq 2S^{-p}  C_2 \int_{\R^n} F_k^p(u) \abs{\nabla \eta}^p \, dx,
	\end{equation*}
	and, taking the limit as~$k \to +\infty$, the monotone convergence theorem ensures that
	\begin{equation*}
		\left(\int_{\R^n} u^{\beta \past} \eta^{\past} dx \right)^{\!\frac{p}{\past}} \leq 2S^{-p}  C_2 \int_{\R^n} u^{\beta p} \abs{\nabla \eta}^p \, dx.
	\end{equation*}
	If we now suppose that~$\eta = 1$ in~$B_{r_2}(x_0)$, recalling that~$\beta p < \past$, we deduce
	\begin{equation*}
		\norma*{u}_{L^{\beta\past}\!(B_{r_2}(x_0))} \leq C,
	\end{equation*}
	for some~$C>0$ depending only on~$n$,~$p$, and~$\norma*{\kappa}_{L^\infty(\R^n)}$. With this estimate at hand, we can apply Theorem~1 in~\cite{serr}, from which~\eqref{eq:bound-Linf-univ} directly follows.
	
	As in the proof of Theorem~1.1 in~\cite{vet}, we now show that the upper bound~\eqref{eq:ub-1} can be promoted to the stronger one in~\eqref{eq:bounds-u}. We first claim that
	\begin{equation}
		\label{eq:norma-past-1}
		\norma*{u}_{L^{\past-1}(\R^n)} \leq C,
	\end{equation}
	for some~$C >0$ depending only on~$n$,~$p$, and~$\norma*{\kappa}_{L^\infty(\R^n)}$. We immediately note that, by~\eqref{eq:norma-past-1} and Lemma~2.2 in~\cite{vet}, we have
	\begin{equation}
		\label{eq:norma-weak-leb}
		\norma*{u}_{L^{p_\ast-1,\infty}(\R^n)} \leq C,
	\end{equation}
	for some~$C >0$ depending only on~$n$,~$p$, and~$\norma*{\kappa}_{L^\infty(\R^n)}$.
	
	We plan to achieve~\eqref{eq:norma-past-1} by adapting to our case a procedure which originates from Brezis \& Kato~\cite{brezis-kato}. To this aim, let~$\psi \in C^\infty_c(\R^n)$ be a cut-off function such that~$\psi \equiv 1$ in~$B_r(-z_\epsilon)$,~$\psi \equiv 0$ in~$\R^n \setminus B_{2r}(-z_\epsilon)$, and~$\abs*{\nabla \psi} \leq 2/r$ in~$B_{2r}(-z_\epsilon) \setminus B_{r}(-z_\epsilon)$. We now test~\eqref{eq:maineq-bubb} with
	\begin{equation*}
		\xi_2 \coloneqq u^\sigma \psi
	\end{equation*}
	for~$\sigma>0$, and get
	\begin{align*}
		\int_{\R^n} &\,\abs{\nabla u}^{p-2} \left\langle \nabla u, \nabla \xi_2 \right\rangle dx \\
		&= \int_{\R^n} \sigma \,\abs*{\nabla u}^{p}  u^{\sigma-1} \psi + \abs*{\nabla u}^{p-2} u^\sigma \left\langle \nabla u, \nabla \psi \right\rangle dx = \int_{\R^n} \kappa u^{\past-1+\sigma} \psi \, dx.
	\end{align*}
	Thus, we deduce
	\begin{align}
	\notag
		\sigma \left(\frac{p}{p+\sigma-1}\right)^{\! p} \int_{\R^n} \,\abs*{\nabla \!\left(u^{\frac{p+\sigma-1}{p}}\right)}^p \psi \, dx + \int_{\R^n} &\,\abs*{\nabla u}^{p-2} u^\sigma \left\langle \nabla u, \nabla \psi \right\rangle dx \\
	\label{eq:passare-limite}
		&= \int_{\R^n} \kappa u^{\past-1+\sigma} \psi \, dx.
	\end{align}
	We observe that, for~$r$ large enough, we have
	\begin{equation*}
		\begin{split}
			\abs*{\int_{\R^n} \,\abs*{\nabla u}^{p-2} u^\sigma \left\langle \nabla u, \nabla \psi \right\rangle dx} &\leq \int_{B_{2r}(-z_\epsilon) \setminus B_{r}(-z_\epsilon)} \,\abs*{\nabla u}^{p-1} \abs*{\nabla \psi} \, u^\sigma \, dx \\
			&\leq \frac{C}{r} \int_{B_{2r}(0) \setminus B_{r}(0)} \frac{dx}{\abs*{x}^{n+\frac{n-p}{p-1} \sigma-1}} \leq C \Haus^{n-1}(\sfera^{n-1}) \, r^{-\frac{n-p}{p-1} \sigma},
		\end{split}
	\end{equation*}
	for some~$C>0$ depending on~$n$,~$p$,~$u$, and~$\epsilon$, where we used a properly rescaled and translated version of~\eqref{eq:est-vetois}. Therefore, letting~$r \to +\infty$ in~\eqref{eq:passare-limite} and taking into account~\eqref{eq:integrabilita},  the monotone convergence theorem ensures
	\begin{equation}
		\label{eq:passata-limite}
		\sigma \left(\frac{p}{p+\sigma-1}\right)^{\! p} \int_{\R^n} \,\abs*{\nabla \!\left(u^{\frac{p+\sigma-1}{p}}\right)}^p \, dx = \int_{\R^n} \kappa u^{\past-1+\sigma} \, dx.
	\end{equation}
	Since~$\left(p+\sigma-1\right) \past > \left(p_\ast-1\right) p$ for every~$\sigma>0$, as a consequence of~\eqref{eq:integrabilita} and~\eqref{eq:passata-limite}, we deduce that~$u^{\frac{p+\sigma-1}{p}} \in \mathcal{D}^{1,p}(\R^n)$. Hence, Sobolev inequality and~\eqref{eq:passata-limite} yield
	\begin{equation*}
		\sigma \left(\frac{p \, S}{p+\sigma-1}\right)^{\! p} \left(\int_{\R^n} u^{\frac{p+\sigma-1}{p} \past} dx\right)^{\!\!\frac{p}{\past}} \leq \int_{\R^n} \kappa u^{\past-1+\sigma} \, dx.
	\end{equation*}
	Let~$r>0$ to be chosen shortly. By H\"older inequality, the previous estimate entails
	\begin{multline}
	\label{eq:toreabsord}
		\sigma \left(\frac{p \, S}{p+\sigma-1}\right)^{\! p} \left(\int_{\R^n} u^{\frac{p+\sigma-1}{p} \past} dx\right)^{\!\!\frac{p}{\past}} \\
		\leq \int_{B_r} \kappa u^{\past-1+\sigma} \, dx + \norma*{\kappa}_{L^\infty(\R^n)} \left(\int_{\R^n \setminus B_r} u^{\frac{p+\sigma-1}{p} \past} \, dx\right)^{\!\!\frac{p}{\past}} \left(\int_{\R^n \setminus B_r} u^{\past} dx\right)^{\!\!\frac{\past-p}{\past}} \!.
	\end{multline}
	By taking advantage of the bound~\eqref{eq:bound-Linf-univ}, for the first integral on the right-hand side of~\eqref{eq:toreabsord} we deduce
	\begin{equation}
		\label{eq:stima-primo-int}
		\int_{B_r} \kappa u^{\past-1+\sigma} \, dx \leq  \mathscr{M}^{\past-1+\sigma} \norma*{\kappa}_{L^\infty(\R^n)} \abs*{B_1} \, r^n.
	\end{equation}
	To handle the second summand on the right-hand side of~\eqref{eq:toreabsord} we start by noticing that
	\begin{equation*}
		\norma*{u}_{L^{\past}\!(\R^n \setminus B_{r})}^{\past-p} \leq
		2^{\max\left\{0,\past-p-1\right\}} \left(\norma*{U}_{L^{\past}\!(\R^n \setminus B_{r})}^{\past-p} +S^{p-\past} \epsilon^{\past-p}\right).
	\end{equation*}
	We now choose~$\epsilon>0$ sufficiently small and~$r>0$ large enough in such a way that the second term on the right-hand side of~\eqref{eq:toreabsord} can be reabsorbed. Specifically, we require that
	\begin{equation}
		\label{eq:ep-r-tochose}
		2^{\max\left\{0,\past-p-1\right\}} \norma*{\kappa}_{L^\infty(\R^n)} \left(\norma*{U}_{L^{\past}\!(\R^n \setminus B_{r})}^{\past-p} +S^{p-\past} \epsilon^{\past-p}\right) \leq \frac{\sigma}{2} \left(\frac{p \, S}{p+\sigma-1}\right)^{\! p}\!.
	\end{equation}
	Note that we will choose~$\sigma>0$ shortly depending only on~$n$ and~$p$. By assuming that
	\begin{equation*}
		\epsilon \leq \left\{\frac{\sigma S^{\past-p} }{2^{\max\left\{2,\past-p+1\right\}} \norma*{\kappa}_{L^\infty(\R^n)}} \left(\frac{p \, S}{p+\sigma-1}\right)^{\! p}\right\}^{\!\frac{1}{\past-p}} \eqqcolon \epsilon_2,
	\end{equation*}
	which depends only on~$n$,~$p$, and~$\norma*{\kappa}_{L^\infty(\R^n)}$, and since there exists~$r_3>0$, depending only on~$n$,~$p$, and~$\norma*{\kappa}_{L^\infty(\R^n)}$, such that
	\begin{equation*}
		2^{\max\left\{0,\past-p-1\right\}} \norma*{\kappa}_{L^\infty(\R^n)} \norma*{U}_{L^{\past}\!(\R^n \setminus B_{r_3})}^{\past-p} \leq \frac{\sigma}{4} \left(\frac{p \, S}{p+\sigma-1}\right)^{\! p},
	\end{equation*}
	we infer that~\eqref{eq:ep-r-tochose} holds true. Therefore, combining~\eqref{eq:toreabsord},~\eqref{eq:stima-primo-int}, and~\eqref{eq:ep-r-tochose} we get
	\begin{equation*}
		\frac{\sigma}{2} \left(\frac{p \, S}{p+\sigma-1}\right)^{\! p} \left(\int_{\R^n} u^{\frac{p+\sigma-1}{p} \past} dx\right)^{\!\!\frac{p}{\past}} \leq C,
	\end{equation*}
	for some~$C >0$ depending only on~$n$,~$p$, and~$\norma*{\kappa}_{L^\infty(\R^n)}$. Hence,~\eqref{eq:norma-past-1} follows by choosing~$\sigma=1-\frac{p}{\past}$ in the previous estimate. We remark that the above approximation procedure used to obtain~\eqref{eq:passata-limite} is necessary.
	
	As a consequence of~\eqref{eq:bound-Linf-univ},~\eqref{eq:norma-weak-leb}, and of the proof of Theorem~1.1 in~\cite{vet}, we deduce the upper bound in~\eqref{eq:bounds-u}.
	
	Finally, the lower bound in~\eqref{eq:bounds-u} is obtained by noticing that, from~\eqref{eq:ipotesi-energ},~\eqref{eq:boud-massa-ext}, and the assumption~$\kappa_0=1$, we get
	\begin{equation*}
		\int_{B_{r_1}} u^{\past} dx \geq \left(1-\chi\right) \frac{S^n}{2}
	\end{equation*}
	and arguing as in Lemma~5.2 of~\cite{plap} -- see also~\cite[Lemma~2.3]{serr-zou}.
	
	Overall, by setting~$\epsilon\coloneqq \min\left\{\epsilon_0,\epsilon_1,\epsilon_2\right\}$, we deduce that~\eqref{eq:bounds-u} holds true. This also fixes the value of~$\delta$ in~\eqref{eq:delta-to-fix}. To prove the gradient bound, we observe that~\eqref{eq:bound-Linf-univ} and Theorem~1 in~\cite{diben} yield
	\begin{equation*}
		\norma*{u}_{C^1(\R^n)} \leq C_1,
	\end{equation*}
	for some~$C_1 \geq 1$, depending only on~$n$,~$p$, and~$\norma*{\kappa}_{L^\infty(\R^n)}$. Therefore, the validity of~\eqref{eq:bound-gradu} follows from the proof of Theorem~1.1 in~\cite{vet}, up to possibly enlarging~$C_1$.
	
	
	\subsection{Introduction of the~$P$-function and of the relevant vector fields.}
	\label{step:Pfunct}
	
	As~$u>0$, we now introduce the function
	\begin{equation}
	\label{eq:defv}
		v \coloneqq u^{-\frac{p}{n-p}}.
	\end{equation}
	A pictorial representation of the construction we consider is given in Figure~\ref{fig:functions-E}.
	\begin{figure}
		\centering
		\begin{tikzpicture}[scale=1.2]
			\draw plot[smooth] file {pgfplots/pgfmanual-bub.table};
			\draw[dashed] plot[smooth] file {pgfplots/pgfmanual-quadsotto.table};
			\draw plot[smooth,yshift=0.2cm] file {pgfplots/pgfmanual-invbub.table};
			\draw[dashed] plot[smooth] file {pgfplots/pgfmanual-quadsopra.table};
			\draw[blue,densely dotted] plot[smooth,yshift=0.2cm] file {pgfplots/pgfmanual-2supfunct.table};
			
			\draw[dotted] (0.5,2.632148026) -- (0.5,0);
			\draw[rotate around={-45:(-2.8,0.199835391)},teal] (-2.79,0.3) -- (-2.79,0.1);
			\draw[rotate around={-45:(3.39,0.199835391)},teal] (3.39,0.3) -- (3.39,0.1);
			\draw[teal] (-2.8,0.199835391) -- (3.39,0.199835391);
			
			\draw (0.5,0) node [anchor=north][inner sep=0.75pt]  [align=center] {$x_0$};
			\draw (-0.375,0.9) node [anchor=south][inner sep=0.75pt]  [align=center] {$u$};
			\draw (-0.375,2.3) node [anchor=south][inner sep=0.75pt]  [align=center] {$v$};
			\draw (2,0.17) node [anchor=north][inner sep=0.75pt]  [align=center] {$B_r$};
			\draw (-3.15,0.2) node [anchor=south][inner sep=0.75pt]  [align=center] {$1/P$};
		\end{tikzpicture}
		\caption{The functions considered and the region~$B_r$ where the energy is concentrated. The dotted blue line is~$1/P$ up to the correct factor.}
		\label{fig:functions-E}
	\end{figure}
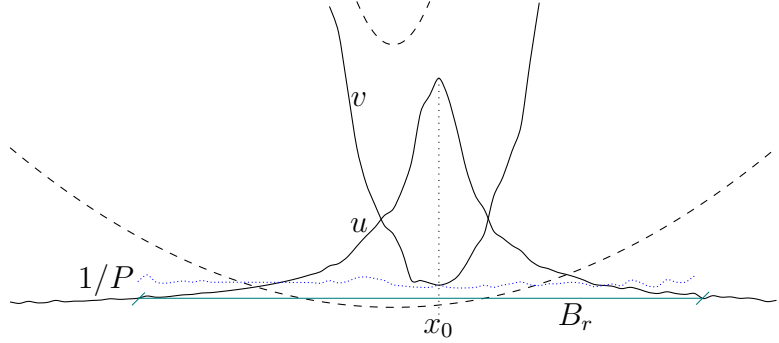
	From this definition and~\eqref{eq:maineq-bubb}, it follows that~$v$ weakly solves
	\begin{equation}
		\label{eq:eqforv}
		\Delta_p v = P + R \quad \mbox{in } \R^n,
	\end{equation}
	where
	\begin{gather}
	\label{eq:defPfunct}
		P \coloneqq n \,\frac{p-1}{p} v^{-1} \,\abs*{\nabla v}^p + \left(\frac{p}{n-p}\right)^{\! p-1} v^{-1}, \\
	\label{eq:defRem}
		R \coloneqq \left(\frac{p}{n-p}\right)^{\! p-1} \left(\kappa-1\right) v^{-1}
	\end{gather}
	The function defined in~\eqref{eq:defPfunct} is the so-called~$P$-function exploited in~\cite{ou,vet-plap} to obtain the classification result for local weak solutions of the~$p$-Laplace equation and in~\cite{cami} for the classification of solution to semilinear PDEs on Riemmanian manifolds. Whereas~\eqref{eq:defRem} represents the remainder term arising from the perturbation, i.e., due to the fact that~$\kappa$ is not necessarily constant.
	
	We first observe that
	\begin{equation}
	\label{eq:D1p}
		u \in \mathcal{D}^{1,p}(\R^n) \quad\mbox{if and only if}\quad \int_{\R^n} v^{-n} \, dx +  \int_{\R^n} v^{-n} \,\abs*{\nabla v}^p \, dx < +\infty.
	\end{equation}
	Furthermore,~$v$ inherits some regularity properties from those of~$u$ listed in~\eqref{eq:u-C1alpha} and in~\eqref{eq:regu}--\eqref{eq:u-C3alpha}, more precisely
	\begin{gather}
	\notag
		\mathcal{Z}_v = \mathcal{Z}_u, \\
	\label{eq:regv}
		v \in C^{1,\alpha}_{\loc}(\R^n) \cap C^{3,\alpha}_{\loc} \!\left(\R^n \setminus \mathcal{Z}_v\right) \quad \mbox{and} \quad \stressv \in W^{1,2}_{\loc}(\R^n), \\
	\notag
		v \in W^{2,2}_{\loc}(\R^n \setminus \mathcal{Z}_v) \quad \mbox{and} \quad \abs*{\nabla v}^{p-2} \,\nabla^2 v \in L^{2}_{\loc}(\R^n \setminus \mathcal{Z}_v) \quad \mbox{for every } 1<p<n, \\
	\notag
		v \in W^{2,2}_{\loc}(\R^n) \quad \mbox{and} \quad \abs*{\nabla v}^{p-2} \,\nabla^2 v \in L^{2}_{\loc}(\R^n) \quad \mbox{for every } 1<p \leq 2.
	\end{gather}
	We claim that~\eqref{eq:bounds-u} and~\eqref{eq:bound-gradu} give some pointwise bounds for~$v$, its gradient, and the $P$-function. Indeed, from the sharp decay estimates~\eqref{eq:bounds-u}, we deduce
	\begin{equation}
	\label{eq:bounds-v}
		\hat{c}_0 \left(1+\abs*{x}^\frac{p}{p-1}\right) \leq v(x) \leq \widehat{C}_0 \left(1+\abs*{x}^\frac{p}{p-1}\right) \quad\mbox{for every } x \in \R^n.
	\end{equation}
	for a couple of constants~$\hat{c}_0,\widehat{C}_0>0$ depending only on~$n$,~$p$, and~$\norma*{\kappa}_{L^\infty(\R^n)}$. Moreover, by~\eqref{eq:bound-gradu} and~\eqref{eq:bounds-v}, we get
	\begin{equation}
		\label{eq:bound-gradv}
		\abs*{\nabla v(x)} \leq \widehat{C}_1 \left(1+\abs*{x}^\frac{1}{p-1}\right) \quad\mbox{for every } x \in \R^n,
	\end{equation}
	for some~$\widehat{C}_1 \geq 1$ depending only on~$n$,~$p$, and~$\norma*{\kappa}_{L^\infty(\R^n)}$. As a consequence of~\eqref{eq:bounds-v} and~\eqref{eq:bound-gradv}, it follows that~$v \geq \hat{c}_0$ and
	\begin{equation}
	\label{eq:bound-fgrad}
		v^{-1} \,\abs*{\nabla v}^p \in L^\infty(\R^n) \quad\mbox{with}\quad v^{-1} \,\abs*{\nabla v}^p \leq \widehat{C}_1^p \,\hat{c}_0^{-1},
	\end{equation}
	thus
	\begin{gather}
	\label{eq:P-Linf}
		P \in L^\infty(\R^n) \quad\mbox{with}\quad \norma*{P}_{L^\infty(\R^n)} \leq n \,\frac{p-1}{p} \,\widehat{C}_1^p \,\hat{c}_0^{-1} + \left(\frac{p}{n-p}\right)^{\! p-1} \hat{c}_0^{-1}, \\
	\label{eq:R-Linf}
		R \in L^\infty(\R^n) \quad\mbox{with}\quad \norma*{R}_{L^\infty(\R^n)} \leq \left(\frac{p}{n-p}\right)^{\! p-1} \hat{c}_0^{-1} \,\norma*{\kappa-1}_{L^\infty(\R^n)}.
	\end{gather}

	For simplicity of notation, we also define the function~$V: \R^n \to [0,+\infty)$ by
	\begin{equation*}
		V \!\left(\xi\right) \coloneqq \frac{\abs*{\xi}^p}{p}.
	\end{equation*}
	Note that~$a(\xi)= \nabla V(\xi)$. The chain rule,~\eqref{eq:u-C1alpha},~\eqref{eq:regu}--\eqref{eq:u-C3alpha},~\eqref{eq:defPfunct}, and~\eqref{eq:regv} entail
	\begin{gather}
	\notag
		V \!\left(\nabla v\right) \in W^{1,2}_{\loc}(\R^n) \cap C^{0,\alpha}_{\loc}(\R^n) \cap C^{2,\alpha}_{\loc}(\R^n \setminus \mathcal{Z}_v), \\
	\label{eq:regP}
		P \in W^{1,2}_{\loc}(\R^n) \cap C^{0,\alpha}_{\loc}(\R^n) \cap C^{2,\alpha}_{\loc}(\R^n \setminus \mathcal{Z}_v).
	\end{gather}
	
	Finally, we introduce the relevant vector field
	\begin{equation*}
	\label{eq:def-gradA}
		W \coloneqq \nabla \stressv,
	\end{equation*}
	and extend it to zero on~$\mathcal{Z}_v$. We also define the associated traceless version as
	\begin{equation}
	\label{eq:tracless-W}
		\mathring{W} = \mathring{\nabla} \stressv \coloneqq \nabla \stressv - \frac{\tr \nabla \stressv}{n} \Id,
	\end{equation}
	where~$\Id$ is the~$n$-dimensional identity matrix. This vector field will play a central role in our proof.
	
	From~\eqref{eq:regv}, we deduce that
	\begin{equation*}
		W \in L^2_{\loc}(\R^n) \cap C^{1,\alpha}_{\loc} \!\left(\R^n \setminus \mathcal{Z}_v\right).
	\end{equation*}
	From the chain rule and~\eqref{eq:regv}, we can also write
	\begin{equation}
	\label{eq:defW}
		W(x)=
		\begin{dcases}
			A(x) D^2v(x)	& \quad \mbox{if } x \in \R^n \setminus \mathcal{Z}_v, \\
			0				& \quad \mbox{if } x \in \mathcal{Z}_v,
		\end{dcases}
	\end{equation}
	where~$A$ is the matrix with components
	\begin{equation*}
		\left[A(x)\right]_{ij} = \alpha_{ij}(x) \coloneqq \partial_{\xi_i\xi_j} V \!\left(\nabla v\right)\!(x) \quad\mbox{for } x \in \R^n \setminus \mathcal{Z}_v,
	\end{equation*}
	that is
	\begin{equation}
	\label{eq:def-A}
		A = \abs*{\nabla v}^{p-2} \Id + \left(p-2\right) \abs*{\nabla v}^{p-4} \,\nabla v \otimes \nabla v \quad\mbox{on } \R^n \setminus \mathcal{Z}_v.
	\end{equation}
	In particular,~$A$ is well-defined and of class~$C_{\loc}^{2,\alpha}$ in~$\R^n \setminus \mathcal{Z}_v$, which has full measure, hence~$A$ is measurable and we extend it to zero on~$\mathcal{Z}_v$.
	
	With this notation at hand, we can rewrite~\eqref{eq:eqforv} as
	\begin{equation}
	\label{eq:trW}
		\tr W = \alpha_{ij} v_{ij} = P + R \quad \mbox{a.e.\ in } \R^n.
	\end{equation}

	We conclude by noticing that a direct computation -- see also~\cite[Lemma~2.1]{ou} -- reveals
	\begin{equation}
	\label{eq:gradP}
		\nabla P = \frac{n}{v} \left(\nabla \stressv^{\!\mathsf{T}} - \frac{P}{n} \Id\right) \nabla v = \frac{n}{v} \left(W^\mathsf{T} - \frac{P}{n} \Id\right) \nabla v \quad \mbox{on } \R^n \setminus \mathcal{Z}_v.
	\end{equation}

	
	\subsection{A differential identity.}
	\label{step:differ-id}
	
	In this step, we prove a differential identity that holds for sufficiently smooth functions. This identity will be used in~\ref{step:approx}.
	
	\begin{proposition}
	\label{prop:id-diff}
		Let~$\Omega\subseteq \R^n$ be an open set and~$w\in C^3(\Omega)$ be a positive function. Let~$\mathscr{V}:\R^n \to [0,+\infty)$ be of class $ C^3(\R^n\setminus\{0\})$, and define
		\begin{equation*}
			\mathsf{P} \coloneqq \frac{n \left(p-1\right)}{w} \, \mathscr{V}\!\left(\nabla w\right) + \left(\frac{p}{n-p}\right)^{\! p-1} \frac{1}{w}
		\end{equation*}
		Then, by setting
		\begin{equation*}
			\mathsf{a}\!\left(\nabla w\right)\coloneqq \nabla_\xi \mathscr{V}\!\left(\nabla w\right),\quad \left[\mathsf{A}\right]_{ij} =\tilde{\alpha}_{ij}\coloneqq\partial_{\xi_i\xi_j}\mathscr{V}\!\left(\nabla w\right),\quad\text{and}\quad \mathscr{W}  \coloneqq\nabla \mathsf{a}\!\left(\nabla w\right),
		\end{equation*}
		the following differential identity holds in~$\Omega\setminus\{x \in \Omega \,\lvert\, \nabla w(x)=0\}$
		\begin{multline}
		\label{eq:id-diff}
			\mathrm{div}\left(w^{2-n}\mathsf{A}\,\nabla \mathsf{P} \right)=w^{1-n}\Bigl\{-n \left\langle \mathsf{A}\,\nabla\mathsf{P}, \nabla w\right\rangle-\mathsf{P} \tr\mathscr{W}
			\\
			+n\left(p-1\right)\left[\tr\mathscr{W}^2+ \left\langle \nabla\!\left(\tr\mathscr{W}\right), \mathsf{a}\!\left(\nabla w\right) \right\rangle \right]-\mathsf{P} w_j\partial_{\xi_i\xi_j\xi_k}\mathscr{V}\!\left(\nabla w\right) w_{ki}\Bigr\}.
		\end{multline}
	\end{proposition}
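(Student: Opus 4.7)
The proof will be a direct pointwise calculation on the open set \(\Omega\setminus\{x\in\Omega : \nabla w(x)=0\}\), where the hypotheses \(\mathscr{V}\in C^3(\R^n\setminus\{0\})\) and \(w\in C^3(\Omega)\) make \(\mathsf{a}(\nabla w)\), \(\mathsf{A}(\nabla w)\) and \(\mathscr{W}\) all of class \(C^1\). My plan is threefold: first, to compute \(\mathsf{A}\,\nabla\mathsf{P}\) in closed form in terms of \(\mathscr{W}\), \(\mathsf{a}(\nabla w)\) and \(\mathsf{A}\nabla w\); second, to expand the left-hand side through the product rule \(\mathrm{div}(fX)=f\,\mathrm{div}\,X+\langle X,\nabla f\rangle\); and third, to reduce the problem to evaluating the two auxiliary divergences \(\mathrm{div}(\mathscr{W}\mathsf{a}(\nabla w))\) and \(\mathrm{div}(\mathsf{A}\nabla w)\) separately. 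Throughout, I will freely exploit the chain-rule identity \(\mathscr{W}_{ij}=\tilde{\alpha}_{ik}w_{kj}\).

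For the first step, rewriting the definition as \(w\,\mathsf{P}=n(p-1)\mathscr{V}(\nabla w)+(p/(n-p))^{p-1}\) and differentiating by the chain rule yields \(w\,\partial_i\mathsf{P}=n(p-1)\mathsf{a}_j(\nabla w)w_{ji}-\mathsf{P}\,w_i\). Contracting against \(\tilde{\alpha}_{ki}\) and using the symmetry of \(D^2w\) to recognise \(\tilde{\alpha}_{ki}w_{ij}=\mathscr{W}_{kj}\) gives the compact formula
\begin{equation*}
    w\,\mathsf{A}\,\nabla\mathsf{P}=n(p-1)\mathscr{W}\,\mathsf{a}(\nabla w)-\mathsf{P}\,\mathsf{A}\,\nabla w.
\end{equation*}
Multiplying by \(w^{1-n}\) and expanding via the divergence product rule then produces
\begin{equation*}
    \mathrm{div}(w^{2-n}\mathsf{A}\,\nabla\mathsf{P})=(1-n)w^{1-n}\langle\nabla w,\mathsf{A}\,\nabla\mathsf{P}\rangle+w^{1-n}\bigl\{n(p-1)\mathrm{div}(\mathscr{W}\,\mathsf{a}(\nabla w))-\mathrm{div}(\mathsf{P}\,\mathsf{A}\,\nabla w)\bigr\}.
\end{equation*}

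The two remaining divergences are then straightforward to expand. Using \(\partial_k\mathsf{a}_j(\nabla w)=\mathscr{W}_{jk}\), the first one contributes \(\mathscr{W}_{kj}\mathscr{W}_{jk}=\tr\mathscr{W}^2\) plus the extra term \((\partial_k\mathscr{W}_{kj})\mathsf{a}_j\); using \(\partial_k\tilde{\alpha}_{kj}=\partial_{\xi_k\xi_j\xi_m}\mathscr{V}(\nabla w)\,w_{mk}\), the second one contributes \(\tr\mathscr{W}+w_j\partial_{\xi_i\xi_j\xi_k}\mathscr{V}(\nabla w)\,w_{ki}\). Invoking the symmetry of \(\mathsf{A}\) to write \(\langle\mathsf{A}\nabla w,\nabla\mathsf{P}\rangle=\langle\nabla w,\mathsf{A}\nabla\mathsf{P}\rangle\), substituting these expressions back, and combining the \((1-n)\) coefficient produced by the outer product rule with the \(-1\) arising from \(\mathrm{div}(\mathsf{P}\mathsf{A}\nabla w)\) yields exactly the coefficient \(-n\) in front of \(\langle\mathsf{A}\nabla\mathsf{P},\nabla w\rangle\) and the remaining terms on the right-hand side.

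The only genuinely delicate point is the identification \((\partial_k\mathscr{W}_{kj})\mathsf{a}_j=\mathsf{a}_j\,\partial_j\tr\mathscr{W}=\langle\mathsf{a}(\nabla w),\nabla\tr\mathscr{W}\rangle\), which is the very reason why the remainder assembles into the compact form \(n(p-1)[\tr\mathscr{W}^2+\langle\nabla\tr\mathscr{W},\mathsf{a}(\nabla w)\rangle]\). Writing each side as \(\mathsf{a}_j\,\partial_{\xi_k\xi_l\xi_m}\mathscr{V}(\nabla w)\) contracted with two copies of \(D^2w\), plus \(\mathsf{a}_j\tilde{\alpha}_{kl}w_{lkj}\), the cubic-in-\(D^2w\) pieces match via the full symmetry of \(\partial_{\xi_i\xi_j\xi_k}\mathscr{V}\) in its three indices after relabelling dummy indices, while the third-order-in-\(w\) pieces coincide by Schwarz's theorem applied to \(w\in C^3\). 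This symmetry matching -- and the careful bookkeeping of the third-order derivatives of \(\mathscr{V}\) it governs -- is where the only real difficulty of the proof sits; the rest of the computation is mechanical algebra.
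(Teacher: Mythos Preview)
Your proof is correct. Both your argument and the paper's are direct pointwise computations, but they are organised differently. The paper computes \(\mathsf{P}_{ij}\) explicitly and then expands \(\mathrm{div}(w^{2-n}\tilde{\alpha}_{ij}\mathsf{P}_j)\) by differentiating each factor in turn, splitting the result into the pieces \(A_1=w\,\tilde{\alpha}_{ij}\mathsf{P}_{ij}\) and \(A_2=w\,(\partial_i\tilde{\alpha}_{ij})\mathsf{P}_j\); the term \(\tilde{\alpha}_{ij}\partial_{ij}\mathscr{V}(\nabla w)\) is then manipulated to extract \(\tr\mathscr{W}^2\) and \(\langle\nabla\tr\mathscr{W},\mathsf{a}(\nabla w)\rangle\). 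You instead first derive the closed-form identity \(w\,\mathsf{A}\nabla\mathsf{P}=n(p-1)\mathscr{W}\mathsf{a}(\nabla w)-\mathsf{P}\,\mathsf{A}\nabla w\) and take a single divergence of \(w^{1-n}\) times this vector field, which bypasses the second derivatives of \(\mathsf{P}\) entirely and reduces the work to the two elementary divergences \(\mathrm{div}(\mathscr{W}\mathsf{a})\) and \(\mathrm{div}(\mathsf{A}\nabla w)\). The crucial identity \((\partial_k\mathscr{W}_{kj})\mathsf{a}_j=\mathsf{a}_j\partial_j\tr\mathscr{W}\), which you correctly isolate as the only nontrivial step and justify via the full symmetry of \(\partial_{\xi_i\xi_j\xi_k}\mathscr{V}\) and Schwarz's theorem for \(w\in C^3\), plays the same role as the paper's rearrangement of \(\tilde{\alpha}_{ij}\partial_{ij}\mathscr{V}(\nabla w)\). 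Your route is somewhat more streamlined and makes the structure of the identity more transparent; the paper's route is more explicitly index-based but requires no vector-field repackaging.
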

	\begin{proof}
		In the proof all the calculations are done in the open set~$\Omega\setminus\{x \in \Omega \,\lvert\, \nabla w(x)\neq 0\}$, where~$\mathsf{P}$ is of class~$C^2$, thanks to our assumptions of~$w$ and~$\mathscr{V}$.
		
		By the chain rule, we have
		\begin{gather}
		\label{eq:deriv-V}
			\partial_j \mathscr{V}\!\left(\nabla w\right)=\mathsf{a}_k \!\left(\nabla w\right) w_{kj} \\
		\label{eq:deriv-W}
			\mathscr{W}_{ik}=\partial_k \mathsf{a}_i(\nabla w)=\tilde{\alpha}_{ij} w_{jk},
		\end{gather}
		where~$\mathsf{a}_k(\xi)$ denotes the~$k$-th component of~$\mathsf{a}(\xi)$.
		
		Then, we compute
		\begin{equation}
		\label{eq:Pi}
			\mathsf{P}_{\! j}=
			\frac{n \left(p-1\right)}{w}\, \partial_j \mathscr{V}\!\left(\nabla w\right) - \frac{\mathsf{P}}{w}\, w_j,
		\end{equation}
		so that
		\begin{align}
		\notag
			\mathsf{P}_{\! ij}&=-\frac{\mathsf{P}_{\! i} w_j}{w}+\frac{\mathsf{P}}{w^2}\, w_{i}w_j-\frac{\mathsf{P}}{w}\, w_{ij}-\frac{n\left(p-1\right)}{w^2}\, w_i\partial_{j}\mathscr{V}(\nabla w)+\frac{n\left(p-1\right)}{w}\, \partial_{ij}\mathscr{V}(\nabla w) \\
		\label{eq:Pij}
			&=-\frac{\mathsf{P}_{\! i} w_j}{w}-\frac{\mathsf{P}_{\! j} w_i}{w}-\frac{\mathsf{P}}{w}\, w_{ij}+\frac{n\left(p-1\right)}{w}\, \partial_{ij}\mathscr{V}(\nabla w),
		\end{align}
		where in the last equality we used~\eqref{eq:Pi}.
		
		Our goal is to compute
		\begin{equation}
		\label{eq:diver-to-comp}
			\begin{split}
				\diver\!\left(w^{2-n}\tilde{\alpha}_{ij} \mathsf{P}_{\! j}  \right)&=w^{1-n}\left[w\,\tilde{\alpha}_{ij}\mathsf{P}_{\! ij}+w\,\partial_i\tilde{\alpha}_{ij}\mathsf{P}_{\! j}+\left(2-n\right)w_i\tilde{\alpha}_{ij}\mathsf{P}_{\! j} \right] \\
				&\eqqcolon w^{1-n}\left[A_1+A_2+\left(2-n\right)w_i\tilde{\alpha}_{ij}\mathsf{P}_{\! j} \right].
			\end{split}
		\end{equation}
		We now proceed with the computation of~$A_1$ and~$A_2$. From~\eqref{eq:Pij}, and using the symmetry property~$\tilde{\alpha}_{ij}=\tilde{\alpha}_{ji}$, we obtain
		\begin{equation}
		\label{eq:A1-1}
			\begin{split}
				A_1=&  -2\tilde{\alpha}_{ij}\mathsf{P}_{\! i} w_j-\mathsf{P}\tilde{\alpha}_{ij} w_{ij}+n\left(p-1\right)\tilde{\alpha}_{ij}\partial_{ij}\mathscr{V}(\nabla w)
				\\
				&=-2\tilde{\alpha}_{ij} \mathsf{P}_{\! i} w_j-\mathsf{P} \tr\mathscr{W}+n\left(p-1\right)\tilde{\alpha}_{ij}\partial_{ij}\mathscr{V}(\nabla w),
			\end{split}
		\end{equation}
		where in the last equality we used~\eqref{eq:deriv-W}. For the last summand in~\eqref{eq:A1-1}, exploiting~\eqref{eq:deriv-V}, we get
		\begin{equation*}
			\begin{split}
				\tilde{\alpha}_{ij}\partial_{ij}\mathscr{V}(\nabla w)&=\tilde{\alpha}_{ij}\partial_i \!\left(\partial_j \mathscr{V}(\nabla w)\right)=\tilde{\alpha}_{ij}\partial_i \!\left(\mathsf{a}_k \!\left(\nabla w\right)w_{kj}\right)=
				\\
				&=\tilde{\alpha}_{ij}\tilde{\alpha}_{kl}w_{kj}w_{li}+\tilde{\alpha}_{ij} \mathsf{a}_k \!\left(\nabla w\right) w_{kji}=\tr\mathscr{W}^2+\tilde{\alpha}_{ij} \mathsf{a}_k \!\left(\nabla w\right) w_{kji}=
				\\
				&=\tr\mathscr{W}^2+\partial_k \!\left(\tilde{\alpha}_{ij} w_{ij}\right)\mathsf{a}_k(\nabla w)-\left(\partial_k \tilde{\alpha}_{ij}\right) \mathsf{a}_k \!\left(\nabla w\right) w_{ij}.
			\end{split}
		\end{equation*}
		Since 
		\begin{equation}
		\label{eq:taijk}
			\partial_k \tilde{\alpha}_{ij} = \partial_{\xi_i\xi_j\xi_l}\mathscr{V} \!\left(\nabla w\right)w_{lk},
		\end{equation}
		from the previous equation, we obtain
		\begin{equation*}
			\begin{split}
				\tilde{\alpha}_{ij}\partial_{ij}\mathscr{V}(\nabla w) & =\tr\mathscr{W}^2+\partial_k \!\left(\tr\mathscr{W}\right)\mathsf{a}_k(\nabla w)-\partial_{\xi_i\xi_j\xi_l}\mathscr{V}\!\left(\nabla w\right) w_{lk} \mathsf{a}_k\!\left(\nabla w\right) w_{ij} \\
				& =\tr\mathscr{W}^2+\partial_k\!\left(\tr\mathscr{W}\right)\mathsf{a}_k(\nabla w)-\partial_{\xi_i\xi_j\xi_l}\mathscr{V}\!\left(\nabla w\right)\partial_l \mathscr{V}\!\left(\nabla w\right) w_{ij},
			\end{split}
		\end{equation*}
		where in the last equality we used~\eqref{eq:deriv-V}. Substituting this into~\eqref{eq:A1-1}, we get 
		\begin{multline}
		\label{eq:A1-2}
			A_1= -2\tilde{\alpha}_{ij} \mathsf{P}_{\! i} w_j-\mathsf{P} \tr\mathscr{W} \\
			+n\left(p-1\right) \left\{\tr\mathscr{W}^2+\partial_k\!\left(\tr\mathscr{W}\right)\mathsf{a}_k(\nabla w)-\partial_{\xi_i\xi_j\xi_l}\mathscr{V}\!\left(\nabla w\right)\partial_l \mathscr{V}\!\left(\nabla w\right) w_{ij}\right\}.
		\end{multline}
		
		Now, from~\eqref{eq:Pi} and~\eqref{eq:taijk}, we deduce that
		\begin{equation}
		\label{eq:A2}
			A_2=\left(-\mathsf{P}w_j+n\left(p-1\right)\partial_j \mathscr{V}\!\left(\nabla w\right) \right)\partial_{\xi_i\xi_j\xi_k} \mathscr{V}\!\left(\nabla w\right) w_{ki},
		\end{equation} 
		therefore, inserting~\eqref{eq:A1-2} and~\eqref{eq:A2} into~\eqref{eq:diver-to-comp}, we establish the validity of~\eqref{eq:id-diff}.
	\end{proof}

	
	\subsection{The approximation procedure.}
	\label{step:approx}
	
	In this step, we define an approximation for~\eqref{eq:eqforv} to ensure that solutions enjoy the necessary regularity to apply Proposition~\ref{prop:id-diff} of~\ref{step:differ-id}. We aim to derive from this an integral identity valid for solutions of the approximate problem -- namely~\eqref{eq:fund-ident} below --, and ultimately take the limit to establish an integral inequality for solutions of the original problem~\eqref{eq:eqforv}.
	
	We begin by defining the approximation. First, let~$r>0$ be fixed. Given~$\varepsilon \in (0,1)$ and~$\xi \in \R^n$, we define
	\begin{equation*}
		V^\varepsilon(\xi) \coloneqq \frac{1}{p} \left[\varepsilon^2+\abs*{\xi}^2\right]^{\frac{p}{2}} \quad\mbox{and}\quad a^\varepsilon(\xi) \coloneqq \nabla V^\varepsilon(\xi).
	\end{equation*}
	
	Let~$v^{\varepsilon} = v^{\varepsilon,r} \in W^{1,p}(B_r)$ be a weak solution to
	\begin{equation}
	\label{eq:approx}
		\begin{cases}
			\diver\left(a^\varepsilon \!\left(\nabla v^\varepsilon\right) \right) = P + R                    & \quad \mbox{in } B_r, \\
			v^\varepsilon = v        & \quad \mbox{on } \partial B_r. \\
		\end{cases}
	\end{equation}
	The direct method of the calculus of variations -- see, for instance,~\cite[Theorem~3.30]{dac} -- applied to the functional
	\begin{equation*}
		\mathcal{F}_\varepsilon(v) \coloneqq \int_{B_r} V^\varepsilon(\nabla v^\varepsilon) + \left(P+R\right) v^\varepsilon \, dx
	\end{equation*}
	ensures the existence of a unique minimizer~$v^{\varepsilon} \in W^{1,p}(B_r)$, with~$v^{\varepsilon} - v \in W^{1,p}_0(B_r)$, satisfying~\eqref{eq:approx} in weak sense.
	
	By the regularity of~$v$,~$P$, and~$\kappa$, classical elliptic estimates -- see, for instance,~\cite{lad-ur} -- give that  
	\begin{gather}
	\label{eq:reg-vep}
		v^\varepsilon \in W^{2,2}_{\loc}(B_r) \cap C^{1,\alpha}_{\loc}(B_r), \\
	\label{eq:conv-vep}
		v^\varepsilon \to v \quad \mbox{in } W^{1,p}(B_r) \cap C^{1,\alpha}_{\loc}(B_r),
	\end{gather}
	see also~\cite{carlos} for~\eqref{eq:conv-vep}. Moreover, for any~$B_{\rho} \Subset B_r$, there exists a constant~$C>0$, independent of~$\varepsilon$, such that 
	\begin{gather}
	\label{eq:ub1}
		\int_{B_{\rho}} \left[\varepsilon^2+\abs*{\nabla v^\varepsilon}^2\right]^{ p-2} \norma{D^2 v^\varepsilon}^2 \, dx \leq C, \\
	\label{eq:ub3}
		\norma*{v^\varepsilon}_{C^{1,\alpha}(B_{\rho})} \leq C \quad \mbox{and} \quad \norma*{a^\varepsilon \!\left(\nabla v^\varepsilon\right)}_{W^{1,2}(B_{\rho})} \leq C.
	\end{gather}
	Finally, by setting~$\mathcal{Z}_{v^\varepsilon} \coloneqq \left\{x \in B_r \,\lvert\, \nabla v^\varepsilon(x)=0 \right\}$, we have
	\begin{equation}
	\label{eq:vep-C3}
		\abs*{\mathcal{Z}_{v^\varepsilon}} = 0 \quad\mbox{and}\quad v^\varepsilon \in C^3 \!\left(B_r \setminus \left(\mathcal{Z}_{v^\varepsilon} \cup \mathcal{Z}_{v}\right)\right).
	\end{equation}

	Taking advantage of the regularity of~$v$ and~$v^\varepsilon$, by defining
	\begin{equation*}
	\label{eq:def-Wep}
		W^\varepsilon \coloneqq \nabla a^\varepsilon \!\left(\nabla v^\varepsilon\right),
	\end{equation*}
	we have that~$W^\varepsilon \in L^2_{\loc}(B_r)$ and we can write
	\begin{equation}
	\label{eq:defWep}
		W^\varepsilon(x)=
		\begin{dcases}
			A^\varepsilon(x) D^2 v^\varepsilon(x)	& \quad \mbox{if } x \in B_r \setminus \left(\mathcal{Z}_{v^\varepsilon} \cup \mathcal{Z}_{v}\right), \\
			0				& \quad \mbox{if } x \in \mathcal{Z}_{v^\varepsilon} \cup \mathcal{Z}_{v}.
		\end{dcases}
	\end{equation}
	Here,~$A^\varepsilon$ is the matrix with components
	\begin{equation}
	\label{eq:defA-ep}
		\left[A^\varepsilon(x)\right]_{ij} = \alpha^\varepsilon_{ij}(x) \coloneqq \partial_{\xi_i\xi_j} V^\varepsilon \!\left(\nabla v^\varepsilon\right)\!(x) \quad\mbox{for } x \in \R^n \setminus \mathcal{Z}_{v^\varepsilon},
	\end{equation}
	that is
	\begin{equation}
	\label{eq:defA-ep-esplicit}
		A^\varepsilon = \left[\varepsilon^2+\abs*{\nabla v^\varepsilon}^2\right]^{ \frac{p-2}{2}} \Id + \left(p-2\right) \left[\varepsilon^2+\abs*{\nabla v^\varepsilon}^2\right]^{ \frac{p-4}{2}} \nabla v^\varepsilon \otimes \nabla v^\varepsilon \quad\mbox{on } B_r \setminus \mathcal{Z}_{v^\varepsilon}.
	\end{equation}
	As for the matrix~$A$ defined in~\eqref{eq:def-A}, we extend~$A^\varepsilon$ to zero on~$\mathcal{Z}_{v^\varepsilon}$ and observe that it is of class~$C^{1,\alpha}_{\loc}$ in~$B_r \setminus \left(\mathcal{Z}_{v^\varepsilon} \cup \mathcal{Z}_{v}\right)$. Hence,~$A^\varepsilon$ is measurable on~$B_r$. In addition, we also have
	\begin{equation}
	\label{eq:trWep}
		\tr W^\varepsilon = \alpha^\varepsilon _{ij} v^\varepsilon_{ij} = P + R \quad \mbox{a.e.\ in } B_r.
	\end{equation}

	We are now in position to state and prove the fundamental integral identity related to our approximation~\eqref{eq:approx}. This is exactly the content of the following result.
	
	\begin{proposition}
	\label{prop:fund-ident}
		Let~$v^\varepsilon=v^{\varepsilon,r}$ be the unique weak solution of~\eqref{eq:approx} and set
		\begin{equation}
		\label{eq:P-ep}
			P^\varepsilon \coloneqq \frac{n \left(p-1\right)}{v^\varepsilon} \, V^\varepsilon\!\left(\nabla v^\varepsilon\right) + \left(\frac{p}{n-p}\right)^{\! p-1} \frac{1}{v^\varepsilon}.
		\end{equation}
		Then, for any~$\phi\in C^\infty_{c}(B_r)$ and any~$t \in \R$, the following identity holds
		\begin{align}
			\notag
				-\int_{B_r} &(v^\varepsilon)^{2-n} \left\langle A^\varepsilon\nabla P^\varepsilon, \nabla (P^t \phi) \right\rangle dx = \int_{B_r}\diver\!\left((v^\varepsilon)^{2-n}A^\varepsilon\nabla P^\varepsilon \right) P^t \phi \, dx \\
			\label{eq:fund-ident}
				&=\int_{B_r}(v^\varepsilon)^{1-n} \,\Bigl\{-n \left\langle A^\varepsilon\nabla P^\varepsilon, \nabla v^\varepsilon \right\rangle -P^\varepsilon \tr W^\varepsilon \\
			\notag
				&\hspace{0.29cm}+ n \left(p-1\right) \Bigl[\tr[W^\varepsilon]^2+ \left\langle \nabla\!\left(\tr W^\varepsilon\right),  a^\varepsilon(\nabla v^\varepsilon) \right\rangle \Bigr]
				-P^\varepsilon v^\varepsilon_j\,\partial_{\xi_i\xi_j\xi_k}V^\varepsilon(\nabla v^\varepsilon)v^\varepsilon_{ki}\Bigr\}\, P^t \phi \, dx,
		\end{align}
		where~$P$ is the~$P$-function defined in~\eqref{eq:defPfunct} and~$A^\varepsilon$ is the matrix given by~\eqref{eq:defA-ep}.
	\end{proposition}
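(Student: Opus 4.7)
The identity naturally splits into two equalities: the first is an integration by parts, and the second is the translation of the general differential identity~\eqref{eq:id-diff} from Proposition~\ref{prop:id-diff} to the concrete setting at hand.

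\textbf{First equality.} The key point is that $P^t\phi$ has compact support in $B_r$. By~\eqref{eq:regP}, $P\in W^{1,2}_{\loc}(\R^n)\cap C^{0,\alpha}_{\loc}(\R^n)$, and $P$ is bounded below on $\mathrm{supp}\,\phi$ by a positive constant (since $P\geq\left(p/(n-p)\right)^{p-1}/v$ while $v$ is locally bounded above by~\eqref{eq:bounds-v}), so $P^t\phi\in W^{1,2}_{\loc}(B_r)$ is compactly supported. On the other hand, the non-degeneracy of~\eqref{eq:approx} for $\varepsilon>0$, together with~\eqref{eq:reg-vep}--\eqref{eq:ub3}, yields $(v^\varepsilon)^{2-n}A^\varepsilon\nabla P^\varepsilon\in L^2_{\loc}(B_r)$ with a locally integrable distributional divergence. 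A routine integration by parts then gives the first equality.

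\textbf{Second equality.} I would apply Proposition~\ref{prop:id-diff} pointwise with the identifications $w=v^\varepsilon$ and $\mathscr{V}=V^\varepsilon$. Both hypotheses are met: $V^\varepsilon\in C^\infty(\R^n)$ because $\varepsilon>0$, and $v^\varepsilon\in C^3\!\left(B_r\setminus(\mathcal{Z}_{v^\varepsilon}\cup\mathcal{Z}_v)\right)$ by~\eqref{eq:vep-C3}. The abstract quantities $\mathsf{P}$, $\mathsf{a}(\nabla w)$, $\mathsf{A}$, $\mathscr{W}$ from Proposition~\ref{prop:id-diff} correspond exactly to $P^\varepsilon$, $a^\varepsilon(\nabla v^\varepsilon)$, $A^\varepsilon$, $W^\varepsilon$ here, so~\eqref{eq:id-diff} produces the pointwise divergence identity on $B_r\setminus(\mathcal{Z}_{v^\varepsilon}\cup\mathcal{Z}_v)$. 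Multiplying by $P^t\phi$ and integrating over this set yields the second equality; since $|\mathcal{Z}_{v^\varepsilon}\cup\mathcal{Z}_v|=0$ by~\eqref{eq:vep-C3}, the integrals coincide with those over $B_r$.

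\textbf{Main obstacle.} The only delicate point is verifying that every summand on the right-hand side is locally integrable, so that discarding the null set $\mathcal{Z}_{v^\varepsilon}\cup\mathcal{Z}_v$ is legitimate. This is where the $\varepsilon$-regularization pays off: $V^\varepsilon$ is smooth on all of $\R^n$, hence the third-order derivatives $\partial_{\xi_i\xi_j\xi_k}V^\varepsilon(\nabla v^\varepsilon)$ are locally bounded; the bound~\eqref{eq:ub1} handles $D^2v^\varepsilon$ in $L^2_{\loc}$; and a positive lower bound on $v^\varepsilon$ on $\mathrm{supp}\,\phi$ (uniform in $\varepsilon$, coming from~\eqref{eq:bounds-v} together with the $C^0$-convergence in~\eqref{eq:conv-vep}) keeps the negative powers of $v^\varepsilon$ under control. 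I do not expect any $\varepsilon$-uniform estimate to be needed within this proposition; uniform control will be the task of the subsequent passage $\varepsilon\to 0$.
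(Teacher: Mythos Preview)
Your proposal is correct and follows essentially the same route as the paper: an integration by parts for the first equality, then an application of Proposition~\ref{prop:id-diff} with $w=v^\varepsilon$, $\mathscr{V}=V^\varepsilon$ on the full-measure set $B_r\setminus(\mathcal{Z}_{v^\varepsilon}\cup\mathcal{Z}_v)$ for the second. The one place where the paper is more explicit is the regularity needed to justify the first equality: rather than invoking ``non-degeneracy together with~\eqref{eq:reg-vep}--\eqref{eq:ub3}'' to assert that $(v^\varepsilon)^{2-n}A^\varepsilon\nabla P^\varepsilon$ has a locally integrable divergence, the paper bootstraps via Theorem~9.19 in~\cite{gt} --- first to $v^\varepsilon\in C^{2,\alpha}_{\loc}(B_r)$ (using that $P+R\in C^{0,\alpha}$ and $\alpha^\varepsilon_{ij}\in C^{0,\alpha}$), then to $v^\varepsilon\in W^{3,2}_{\loc}(B_r)$, whence $P^\varepsilon\in W^{2,2}_{\loc}(B_r)$ and $A^\varepsilon\nabla P^\varepsilon\in W^{1,2}_{\loc}(B_r)$. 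This makes the divergence an honest $L^2_{\loc}$ function on all of $B_r$ and cleanly rules out any singular contribution on the null set; your argument implicitly relies on the same bootstrapping but leaves it unstated.
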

	\begin{proof}
		From~\eqref{eq:reg-vep}, we know that~$v^\varepsilon \in W^{2,2}_{\loc}(B_r)$ is a strong solution to~\eqref{eq:trWep}. Moreover, from the explicit expression of~$A^\varepsilon$ given by~\eqref{eq:defA-ep-esplicit}, we deduce that~$\alpha^\varepsilon_{ij} \in C^{0,\alpha}_{\loc}(B_r)$. Since~$P \in C^{0,\alpha}(B_r)$, by~\eqref{eq:regP}, and~$R \in C^{0,\alpha}(B_r)$, by~\eqref{eq:regv} and the regularity of~$\kappa$, Theorem~9.19 in~\cite{gt} yields
		\begin{equation*}
			v^\varepsilon \in C^{2,\alpha}_{\loc}(B_r),
		\end{equation*}
		which, in turn, implies~$\alpha^\varepsilon_{ij} \in C^{1,\alpha}_{\loc}(B_r)$ and~$P^\varepsilon \in C^{1,\alpha}_{\loc}(B_r)$.
		
		Again, by Theorem~9.19 in~\cite{gt}, we have that~$v^\varepsilon \in W^{3,2}_{\loc}(B_r)$, which implies that~$V^\varepsilon \!\left(\nabla v^\varepsilon\right) \in W^{2,2}_{\loc}(B_r)$, since, by the chain rule
		\begin{equation*}
			\nabla V^\varepsilon \!\left(\nabla v^\varepsilon\right) = a^\varepsilon \!\left(\nabla v^\varepsilon\right) D^2 v^\varepsilon.
		\end{equation*}
		As a consequence, from~\eqref{eq:P-ep}, we infer
		\begin{equation*}
			P^\varepsilon \in W^{2,2}_{\loc}(B_r),
		\end{equation*}
		and thus~$\left(A^\varepsilon \nabla P^\varepsilon\right)_i = \alpha^\varepsilon_{ij} P^\varepsilon_j \in W^{1,2}_{\loc}(B_r)$.
		
		Therefore, for any test function~$\phi\in C^\infty_{c}(B_r)$, integrating by parts, we deduce
		\begin{equation*}
			- \int_{B_r} (v^\varepsilon)^{2-n} \left\langle A^\varepsilon\nabla P^\varepsilon, \nabla (P^t \phi) \right\rangle dx = \int_{B_r}\diver\!\left((v^\varepsilon)^{2-n}A^\varepsilon\nabla P^\varepsilon \right) P^t \phi \, dx.
		\end{equation*}
	
		Finally, taking advantage of~\eqref{eq:vep-C3}, we apply Proposition~\ref{prop:id-diff} with~$w=v^\varepsilon$,~$\mathscr{V}=V^\varepsilon$, and~$\Omega = B_r \setminus \left(\mathcal{Z}_{v^\varepsilon} \cup \mathcal{Z}_{v}\right)$ to conclude that~\eqref{eq:id-diff} holds in~$\Omega$. Since both~$\mathcal{Z}_{v^\varepsilon}$ and~$\mathcal{Z}_{v}$ are negligible sets in~$B_r$, we deduce that~\eqref{eq:id-diff} holds a.e.\ in~$B_r$, and the conclusion follows.
	\end{proof}

	We now extend Proposition~\ref{prop:fund-ident} to a solution of~\eqref{eq:eqforv} in order to derive the fundamental inequality required for our proof. This extension relies on a subtle argument to obtain a meaningful inequality in its full strength. The key difficulty is that the tensor~$W$ is not necessarily symmetric, except when~$p=2$.
	
	\begin{proposition}
	\label{prop:fund-ineq}
		Let~$v$ be a weak solution of~\eqref{eq:eqforv}. Then, we have
		\begin{equation*}
		\label{eq:toprove-AgradP}
			A \nabla P \in L^2_{\loc}(\R^n).
		\end{equation*}
		In addition, for every non-negative~$\phi \in C^\infty_c(\R^n)$ such that~$\mathrm{supp}\,\phi \subseteq B_{r}$ and any~$t \in \R$, it follows that
		\begin{equation}
			\label{eq:fund-ineq}
			\begin{split}
				-\int_{B_{r}} &v^{2-n} \left\langle A\nabla P, \nabla (P^t \phi) \right\rangle dx \geq n \left(p-1\right) \left(1-c_p\right) \int_{B_{r}} v^{1-n} \,\abs{\mathring{W}}^2 \, P^t \phi \, dx \\
				&+ \left(p-1\right) \int_{B_{r}} v^{1-n} \,\bigl\{n \left\langle \nabla R,  \stressv \right\rangle + R \left(P+R\right) \bigr\}\, P^t \phi \, dx,
			\end{split}
		\end{equation}
		where~$c_p \in [0,1)$ is defined as
		\begin{equation*}
			c_p \coloneqq \frac{\left(1-\varrho_p\right)^2}{1+\varrho_p^2} \quad\mbox{with}\quad \varrho_{p} \coloneqq \left(p-1\right)^{\mathrm{sgn}(2-p)}\!.
		\end{equation*}
	\end{proposition}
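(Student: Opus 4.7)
My plan is to derive \eqref{eq:fund-ineq} by starting from the identity \eqref{eq:fund-ident} for the approximating solutions $v^\varepsilon$, performing two algebraic cancellations on its right-hand side, applying a pointwise bound relating $\tr W^2$ to $|\mathring{W}|^2$, and finally passing to the limit $\varepsilon\to 0$. The regularity statement $A\nabla P\in L^2_\loc(\R^n)$ will follow as a by-product: formula \eqref{eq:gradP} expresses $A\nabla P$ in terms of $v$, $\nabla v$, $P$ and the first derivatives of $\stressv$, so combining the integrability in \eqref{eq:regv} with the lower bound $v\geq \hat{c}_0$ from \ref{step:decay} yields the claim.

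I would apply Proposition~\ref{prop:fund-ident} with a non-negative test function $\phi\in C^\infty_c(B_r)$ and parameter $t\in\R$, then simplify via two structural identities. First, Euler's identity for $V(\xi)=|\xi|^p/p$ gives $A\nabla v=(p-1)\stressv$, so $-n\langle A\nabla P,\nabla v\rangle$ cancels against the $\nabla P$-contribution of $n(p-1)\langle \nabla \tr W, \stressv\rangle=n(p-1)\langle \nabla(P+R), \stressv\rangle$, leaving the remainder $n(p-1)\langle \nabla R, \stressv\rangle$. Second, a direct expansion of $\partial_{\xi_i\xi_j\xi_k}V$ and comparison with $\Delta_p v$ yields the pointwise identity $v_j\,\partial_{\xi_i\xi_j\xi_k}V(\nabla v)\,v_{ki}=(p-2)\Delta_p v=(p-2)(P+R)$. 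At the $\varepsilon$-level both identities hold up to corrections of order $O(\varepsilon^2)$ which vanish in the limit thanks to \eqref{eq:conv-vep} and the uniform bounds \eqref{eq:ub1}--\eqref{eq:ub3}. Using $\tr W=P+R$ and the orthogonal decomposition $\tr W^2=\tr \mathring{W}^2+(P+R)^2/n$, the algebraic collapse $-(p-1)P(P+R)+(p-1)(P+R)^2=(p-1)R(P+R)$ reduces the limit of \eqref{eq:fund-ident} to
\begin{equation*}
    -\int_{B_r} v^{2-n}\langle A\nabla P,\nabla(P^t\phi)\rangle\, dx=\int_{B_r} v^{1-n}\bigl\{n(p-1)\tr \mathring{W}^2+(p-1)R(P+R)+n(p-1)\langle \nabla R,\stressv\rangle\bigr\}P^t\phi\, dx.
\end{equation*}

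The conceptual core is then the pointwise inequality $\tr \mathring{W}^2\geq(1-c_p)|\mathring{W}|^2$, which quantifies the asymmetry of $W=A\,D^2v$ when $p\neq 2$. Splitting $W=W_{\mathrm{sym}}+W_{\mathrm{anti}}$, one finds $\tr \mathring{W}^2=|\mathring{W}_{\mathrm{sym}}|^2-|W_{\mathrm{anti}}|^2$ and $|\mathring{W}|^2=|\mathring{W}_{\mathrm{sym}}|^2+|W_{\mathrm{anti}}|^2$, so the claim is equivalent to $c_p|\mathring{W}_{\mathrm{sym}}|^2\geq(2-c_p)|W_{\mathrm{anti}}|^2$. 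Choosing coordinates with $e_1=\nabla v/|\nabla v|$ and writing $D^2v$ in block form with entries $a\in\R$, $b\in\R^{n-1}$, $C\in\R^{(n-1)\times(n-1)}$, direct computation gives
\begin{equation*}
    |W_{\mathrm{anti}}|^2=\tfrac{(p-2)^2}{2}|\nabla v|^{2(p-2)}|b|^2, \quad |\mathring{W}_{\mathrm{sym}}|^2=|\nabla v|^{2(p-2)}\Bigl[(p-1)^2 a^2+\tfrac{p^2}{2}|b|^2+|C|^2-\tfrac{((p-1)a+\tr C)^2}{n}\Bigr].
\end{equation*}
With $c_p=(p-2)^2/(p^2-2p+2)$ and $2-c_p=p^2/(p^2-2p+2)$, the $|b|^2$-terms cancel exactly and the remaining inequality reduces to the Cauchy--Schwarz bound $((p-1)a+\tr C)^2\leq n[(p-1)^2a^2+|C|^2]$. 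Since the eigenvalue ratio of $A^\varepsilon$ is dominated by that of $A$, the same pointwise bound holds at the approximating level with the same constant $c_p$.

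The step I expect to be most delicate is the limit $\varepsilon\to 0$: one must combine lower semicontinuity of $\int v^{1-n}|\mathring{W}^\varepsilon|^2 P^t\phi\, dx$ (exploiting $\phi\geq 0$ and $P>0$) with strong convergence of the lower-order terms, while simultaneously controlling the $\varepsilon$-errors introduced by replacing $V$ with $V^\varepsilon$ in the cancellation and in the third-derivative identity. The pointwise algebraic inequality itself, though the structural heart of the argument, is elementary once the coordinate reduction is performed; the real work lies in the interplay between the convexity-based regularization and the structural identities that produce the key cancellations.
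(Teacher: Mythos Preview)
Your approach coincides with the paper's: both start from Proposition~\ref{prop:fund-ident}, exploit the Euler relation $A\nabla v=(p-1)\stressv$ and the third-derivative identity converging to $(p-2)(P+R)$, apply a pointwise bound $\tr[W^\varepsilon]^2\geq(1-c_p)|\mathring{W}^\varepsilon|^2+\tfrac{1}{n}(P+R)^2$ at the $\varepsilon$-level, and pass to the limit via weak lower semicontinuity. The one genuine difference is how the algebraic inequality is established: the paper invokes an abstract matrix lemma due to Guarnotta--Mosconi (Lemmas~\ref{lem:gu-mosc}--\ref{lem:matrix-est}), valid for any $X=PS$ with $P,S$ symmetric and $P$ positive definite, whereas you carry out a direct coordinate computation in the frame $e_1=\nabla v/|\nabla v|$, reducing the claim to the Cauchy--Schwarz bound $((p-1)a+\tr C)^2\leq n[(p-1)^2a^2+|C|^2]$. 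Your argument is more elementary, self-contained, and makes the value $c_p=(p-2)^2/(p^2-2p+2)$ transparent; the paper's route is more general (it does not use the specific form of $A$) and is shorter once the cited lemma is granted. One caution: your displayed limiting \emph{equality} with $n(p-1)\tr\mathring{W}^2$ on the right cannot be justified as written, since $\tr[W^\varepsilon]^2$ is quadratic in a merely weakly convergent sequence; but your final paragraph correctly identifies the fix---apply the pointwise inequality before the limit (this is where the comparison of eigenvalue ratios of $A^\varepsilon$ and $A$ is needed), then use lower semicontinuity of $\int (v^\varepsilon)^{1-n}|\mathring{W}^\varepsilon|^2 P^t\phi\,dx$---which is precisely what the paper does.
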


	We first recall an elementary yet fundamental observation from Lemma~3.1 in~\cite{guarn-mosconi}, due to Guarnotta \& Mosconi, whose proof can be found therein.
	
	\begin{lemma}
	\label{lem:gu-mosc}
		Let~$X=PS$, where~$P,S \in \mathrm{Sym}(n)$, with~$P$ being positive definite. Moreover, let~$\lambda_{\mathrm{min}}$ and~$\lambda_{\mathrm{max}}$ denote the minimum and maximum eigenvalues of~$P$, respectively, and define the ratio $\varrho \coloneqq \lambda_{\mathrm{min}} / \lambda_{\mathrm{max}} \in (0,1]$. Then, we have
		\begin{equation*}
			\abs*{X-X^\mathsf{T}}^{2} \leq 2c \,\abs{X}^2
		\end{equation*}
		with
		\begin{equation}
		\label{eq:def-c-gm}
			c \coloneqq \frac{\left(1-\varrho\right)^2}{1+\varrho^2} \in [0,1).
		\end{equation}
		Additionally, the map~$\varrho \mapsto c = c(\varrho)$ is decreasing on~$(0,1]$.
	\end{lemma}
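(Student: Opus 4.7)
The natural approach is to diagonalize $P$. Since $P \in \mathrm{Sym}(n)$ is positive definite, pick an orthonormal basis of eigenvectors of $P$; in this basis $P = \mathrm{diag}(\lambda_1,\dots,\lambda_n)$ with $\lambda_i \in [\lambda_{\mathrm{min}},\lambda_{\mathrm{max}}]$, while $S=(s_{ij})$ remains symmetric. Both $|X|^2$ and $|X-X^{\mathsf{T}}|^2$ are Frobenius norms, hence unchanged by an orthogonal change of basis, so it suffices to prove the estimate there. A direct computation gives $X_{ij}=\lambda_i s_{ij}$, whence
\begin{equation*}
|X-X^{\mathsf{T}}|^2 = \sum_{i,j} (\lambda_i-\lambda_j)^2 s_{ij}^2, \qquad |X|^2 = \sum_{i,j} \lambda_i^2 s_{ij}^2.
\end{equation*}

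The key step is to exploit $s_{ij}=s_{ji}$ to symmetrize the second sum: relabeling $i\leftrightarrow j$ yields $|X|^2 = \sum_{i,j}\lambda_j^2 s_{ij}^2$, and averaging produces $2|X|^2 = \sum_{i,j}(\lambda_i^2+\lambda_j^2) s_{ij}^2$. Consequently, the desired bound $|X-X^{\mathsf{T}}|^2 \le 2c\,|X|^2$ would follow at once from the pointwise inequality
\begin{equation*}
(\lambda_i-\lambda_j)^2 \leq c\,(\lambda_i^2+\lambda_j^2) \qquad \text{for all } i,j,
\end{equation*}
which is trivial when $i=j$ and, for $i\neq j$, reduces to a one-variable question: writing $t\coloneqq \min(\lambda_i,\lambda_j)/\max(\lambda_i,\lambda_j) \in [\varrho,1]$, it becomes $f(t) \le c$ with $f(t) \coloneqq (1-t)^2/(1+t^2)$.

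The last step is then a short calculus check. A direct differentiation gives $f'(t) = -2(1-t^2)/(1+t^2)^2$, so $f$ is decreasing on $[0,1]$ and hence $\max_{[\varrho,1]} f = f(\varrho) = c$, closing the proof of the inequality. The same derivative also yields the monotonicity statement, since $c(\varrho)=f(\varrho)$ and $c'(\varrho)<0$ on $(0,1)$. I do not anticipate any genuine obstacle; the only subtle point is that one must symmetrize $|X|^2$ before reducing to a pointwise bound, because a naive termwise comparison of $(\lambda_i-\lambda_j)^2 s_{ij}^2$ with $\lambda_i^2 s_{ij}^2$ alone does not give the sharp constant $c(\varrho)$ in terms of the eigenvalue ratio.
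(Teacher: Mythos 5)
Your proof is correct and self-contained. The paper does not itself prove this lemma---it cites Lemma~3.1 of Guarnotta \& Mosconi and refers the reader there---so there is no internal proof to compare against; your argument (orthogonal diagonalization of $P$, observation that $X_{ij}=\lambda_i s_{ij}$ so that $\abs*{X-X^\mathsf{T}}^2=\sum_{i,j}(\lambda_i-\lambda_j)^2 s_{ij}^2$, symmetrization $2\abs*{X}^2=\sum_{i,j}(\lambda_i^2+\lambda_j^2)s_{ij}^2$, and reduction to the scalar bound $(1-t)^2/(1+t^2)\le c$ on $[\varrho,1]$ via $f'(t)=-2(1-t^2)/(1+t^2)^2<0$) is the natural direct argument and correctly establishes both the inequality and the monotonicity of $c(\varrho)$. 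The subtle point you flag---that one must symmetrize $\abs*{X}^2$ in the index pair before comparing termwise, or else one does not recover the constant in terms of the eigenvalue ratio---is indeed the crux, and you handle it correctly.
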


	Building on this, we establish the following algebraic lemma. Although this result is also elementary, we provide a detailed proof due to its significance.
	\begin{lemma}
		\label{lem:matrix-est}
		Let~$X=PS$, where~$P,S \in \mathrm{Sym}(n)$, with~$P$ being positive definite. Then, the following inequality holds
		\begin{equation*}
			\tr\! X^2 - \abs*{X}^2 \geq -c \,\abs{\mathring{X}}^2,
		\end{equation*}
		where~$c \in [0,1)$ is the constant defined in~\eqref{eq:def-c-gm}.
	\end{lemma}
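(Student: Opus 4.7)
The plan is to reduce the inequality to a direct application of Lemma~\ref{lem:gu-mosc}, applied not to $X$ itself but to its traceless part~$\mathring{X}$. The bridge between the two sides of the claimed bound is the elementary identity
\begin{equation*}
\tr Y^{2} - \abs{Y}^{2} = -\tfrac{1}{2}\,\abs{Y-Y^{\mathsf{T}}}^{2}, \quad Y \in \R^{n\times n},
\end{equation*}
which I would verify by decomposing $Y$ into its symmetric and antisymmetric parts $Y_{s},Y_{a}$ and exploiting the orthogonality $\tr(Y_{s}Y_{a})=0$ together with $\tr Y_{a}^{2}=-\abs{Y_{a}}^{2}$. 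The right-hand side already depends only on the antisymmetric component of $Y$, which is precisely the reason why the subsequent reduction to the traceless case will not destroy it.

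Next, I would record two routine algebraic observations. First, subtracting a multiple of $\Id$ preserves the antisymmetric part, so $X-X^{\mathsf{T}}=\mathring{X}-\mathring{X}^{\mathsf{T}}$. Second, since $\tr\mathring{X}=0$, the Pythagorean decomposition of the Frobenius norm together with a direct expansion of $\tr X^{2}$ gives
\begin{equation*}
\tr X^{2}-\abs{X}^{2} = \tr\mathring{X}^{2}-\abs{\mathring{X}}^{2}.
\end{equation*}
Combining these two facts with the identity above applied to $Y=\mathring{X}$ yields
\begin{equation*}
\tr X^{2}-\abs{X}^{2} \,=\, -\tfrac{1}{2}\,\abs{\mathring{X}-\mathring{X}^{\mathsf{T}}}^{2}.
\end{equation*}

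The crucial, and really the only delicate, step is to show that $\mathring{X}$ still admits a factorisation of the form required by Lemma~\ref{lem:gu-mosc}, with the \emph{same} positive definite factor $P$. Since $P$ is symmetric and invertible, $P^{-1}$ is symmetric, and
\begin{equation*}
\mathring{X} \,=\, PS - \frac{\tr(PS)}{n}\,\Id \,=\, P\!\left(S-\frac{\tr(PS)}{n}\,P^{-1}\right) \,=:\, P\tilde{S},
\end{equation*}
with $\tilde{S}$ symmetric. The matrix $P$ is unchanged, hence so are its eigenvalues and the associated constant $c=c(\varrho)$. Applying Lemma~\ref{lem:gu-mosc} to the factorisation $\mathring{X}=P\tilde{S}$ therefore gives $\abs{\mathring{X}-\mathring{X}^{\mathsf{T}}}^{2}\le 2c\,\abs{\mathring{X}}^{2}$, and inserting this bound into the previous equality concludes the proof. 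I note that a naive application of Lemma~\ref{lem:gu-mosc} directly to $X=PS$ would only yield the weaker bound $\tr X^{2}-\abs{X}^{2}\ge -c\abs{X}^{2}$, with $\abs{X}^{2}$ in place of $\abs{\mathring{X}}^{2}$; this is insufficient for the subsequent use inside Proposition~\ref{prop:fund-ineq}, and explains why passing through the traceless part is essential.
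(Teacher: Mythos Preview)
Your proposal is correct and follows essentially the same approach as the paper's proof: both establish the identity $\tr X^{2}-\abs{X}^{2}=-2\abs{X_{A}}^{2}=-\tfrac12\abs{\mathring{X}-\mathring{X}^{\mathsf T}}^{2}$ via the symmetric/antisymmetric decomposition and the observation that $\mathring{X}_{A}=X_{A}$, then factor $\mathring{X}=P\bigl(S-\tfrac{\tr X}{n}P^{-1}\bigr)$ and apply Lemma~\ref{lem:gu-mosc}. Your side remark about why applying Lemma~\ref{lem:gu-mosc} directly to $X$ would be insufficient is apt and matches the motivation implicit in the paper.
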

	\begin{proof}
		We begin by computing
		\begin{align}
			\notag
			\tr\! X^2 - \abs*{X}^2 &= \tr\! X^2 - \tr\!\left[X X^\mathsf{T}\right] = \tr \!\left[X \left(X-X^\mathsf{T}\right)\right] = 2 \tr\!\left[X X_\mathrm{A}\right] \\
			\label{eq:diff-tr-norm}
			&= 2 \tr\!\left[\left(X_\mathrm{S}+X_\mathrm{A}\right) X_\mathrm{A}\right] = 2 \tr\!\left[X_\mathrm{S} X_\mathrm{A} + X_\mathrm{A}^2\right]
		\end{align}
		where~$X_\mathrm{S}$ and~$X_\mathrm{A}$ denote the symmetric and the antisymmetric part of~$X$, respectively. Moreover, we have 		\begin{equation*}
		\label{eq:XS-XA-tenere}
			4 X_\mathrm{S} X_\mathrm{A} = \left(X+X^\mathsf{T}\right) \left(X-X^\mathsf{T}\right) = X^2 - X X^\mathsf{T} + X^\mathsf{T} X - \left(X^\mathsf{T}\right)^{\!2},
		\end{equation*}
		whence, by the cyclic property of the trace, we have  
		\begin{equation}
		\label{eq:tracce}
			\tr\!\left[X_\mathrm{S} X_\mathrm{A}\right]=0.
		\end{equation}
		Therefore, using~\eqref{eq:diff-tr-norm},~\eqref{eq:tracce}, and the antisymmetry of~$X_\mathrm{A}$, we deduce that
		\begin{equation*}
			\tr\! X^2 - \abs*{X}^2 = - 2 \tr\!\left[X_\mathrm{A} X_\mathrm{A}^\mathsf{T}\right] = -2 \,\abs*{X_\mathrm{A}}^2
		\end{equation*}
		We observe that~$\mathring{X}_\mathrm{A}=X_\mathrm{A}$, thus the latter implies
		\begin{equation}
			\label{eq:tr-norm-senzatr}
			\tr\! X^2 - \abs*{X}^2 = -2 \,\abs*{\mathring{X}_\mathrm{A}}^2.
		\end{equation}
		Since~$P \in \mathrm{Sym}(n)$ is positive definite, it admits an inverse~$P^{-1} \in \mathrm{Sym}(n)$, allowing us to write
		\begin{equation*}
			\mathring{X} = PS - \frac{\tr X}{n} \Id = P \left(S - \frac{\tr X}{n} P^{-1}\right) \!,
		\end{equation*}
		which is the product of two symmetric matrices with the first one being positive definite. By applying Lemma~\ref{lem:gu-mosc}, we obtain
		\begin{equation*}
			\abs*{\mathring{X}-\mathring{X}^\mathsf{T}}^{2} \leq 2c \,\abs{\mathring{X}}^2,
		\end{equation*}
		that is
		\begin{equation*}
			2 \,\abs*{\mathring{X}_\mathrm{A}}^{2} \leq c \,\abs{\mathring{X}}^2
		\end{equation*}
		Combining this with~\eqref{eq:tr-norm-senzatr}, the conclusion easily follows.
	\end{proof}

	We are now in position to proceed with the proof of Proposition~\ref{prop:fund-ineq}.

	\begin{proof}[Proof of Proposition~\ref{prop:fund-ineq}]
		Let~$r>0$ be fixed and~$\phi \in C^\infty_c(\R^n)$ be non-negative and such that~$\mathrm{supp}\,\phi \subseteq B_{r}$.
		
		First, by~\eqref{eq:trWep}, we observe that
		\begin{equation*}
			\nabla\!\left(\tr W^\varepsilon\right) = \nabla P + \nabla R \quad\mbox{a.e.\ in } B_r.
		\end{equation*}
		Therefore, from this,~\eqref{eq:fund-ident}, and taking into account~\eqref{eq:trWep} once more, we deduce
		\begin{align}
		\notag
			-\int_{B_{r}} &(v^\varepsilon)^{2-n} \left\langle A^\varepsilon\nabla P^\varepsilon, \nabla (P^t \phi) \right\rangle dx \\
		\label{eq:passare-al-limite}
			&=\int_{B_{r}}(v^\varepsilon)^{1-n} \,\bigl\{-n \left\langle A^\varepsilon\nabla P^\varepsilon, \nabla v^\varepsilon \right\rangle -P^\varepsilon P -P^\varepsilon R + n \left(p-1\right) \bigl[\tr[W^\varepsilon]^2 \\
		\notag
			&\hspace{0,45cm} + \left\langle \nabla P,  a^\varepsilon(\nabla v^\varepsilon) \right\rangle + \left\langle \nabla R,  a^\varepsilon(\nabla v^\varepsilon) \right\rangle \bigr] 
			-P^\varepsilon v^\varepsilon_j\,\partial_{\xi_i\xi_j\xi_k}V^\varepsilon(\nabla v^\varepsilon)v^\varepsilon_{ki}\bigr\}\, P^t \phi \, dx.
		\end{align}
	
		We now shall let~$\varepsilon \to 0$ in~\eqref{eq:passare-al-limite}. To this aim we study the convergence of the terms on both sides of~\eqref{eq:passare-al-limite}.
		
		We first prove that
		\begin{gather}
		\label{eq:conv-V}
			V^\varepsilon \!\left(\nabla v^\varepsilon\right) \to V \!\left(\nabla v\right) \mbox{ in } C^{0,\alpha}_{\loc}(B_{r}) \mbox{ and } V^\varepsilon \!\left(\nabla v^\varepsilon\right) \rightharpoonup V \!\left(\nabla v\right) \mbox{ weakly in } W^{1,2}_{\loc}(B_{r}), \\
		\label{eq:conv-stress}
			a^\varepsilon \!\left(\nabla v^\varepsilon\right) \to a \!\left(\nabla v\right) \mbox{ in } C^{0}_{\loc}(B_{r}) \mbox{ and } a^\varepsilon \!\left(\nabla v^\varepsilon\right) \rightharpoonup a \!\left(\nabla v\right) \mbox{ weakly in } W^{1,2}_{\loc}(B_{r}), \\
		\label{eq:conv-P}
		P^\varepsilon \to  P \mbox{ in } C^{0,\alpha}_{\loc}(B_{r}) \mbox{ and } P^\varepsilon \rightharpoonup P \mbox{ weakly in } W^{1,2}_{\loc}(B_{r}).
		\end{gather}
		The H\"older and uniform convergences in~\eqref{eq:conv-V} and~\eqref{eq:conv-stress} follow from~\eqref{eq:conv-vep} and since~$V^\varepsilon \in C^1(\R^n)$.
		
		Next, taking advantage of the uniform bound~\eqref{eq:ub3}, we can extract a subsequence, relabeled as~$a^\varepsilon \!\left(\nabla v^\varepsilon\right)$, such that
		\begin{equation*}
			 a^\varepsilon \!\left(\nabla v^\varepsilon\right) \rightharpoonup \stressv \quad \mbox{weakly in } W^{1,2}_{\loc}(B_{r}).
		\end{equation*}
		Then, we have
		\begin{equation*}
			a^\varepsilon_k \!\left(\nabla v^\varepsilon\right) = \partial_{\xi_k} V^\varepsilon \!\left(\nabla v^\varepsilon\right) = \left[\varepsilon^2+\abs*{\nabla v^\varepsilon}^2\right]^{\frac{p-2}{2}} v^\varepsilon_k,
		\end{equation*}
		where~$a^\varepsilon_k(\xi)$ denotes the~$k$-th component of~$a_k(\xi)$. By the chain rule
		\begin{equation}
		\label{eq:djVep}
			\partial_j V^\varepsilon \!\left(\nabla v^\varepsilon\right) = a^\varepsilon_k \!\left(\nabla v^\varepsilon\right) v^\varepsilon_{kj} \quad \mbox{a.e.\ in } B_{r},
		\end{equation}
		so that, using~\eqref{eq:ub1} and~\eqref{eq:ub3}, we deduce
		\begin{equation*}
			\norma*{\nabla V^\varepsilon \!\left(\nabla v^\varepsilon\right)}_{L^2(B_\rho)} \leq C \quad \mbox{for any } \rho \in (0,r),
		\end{equation*}
		and for some~$C>0$ depending on~$\rho$. Thus, the weak convergence in~\eqref{eq:conv-V} follows, up to a subsequence. Since the argument holds for any subsequence, it follows that the weak
		convergences hold for the whole sequence.
	
		Finally, the convergences in~\eqref{eq:conv-P} are a consequence of the definition of~$P^\varepsilon$ in~\eqref{eq:P-ep}, \eqref{eq:conv-vep}, and~\eqref{eq:conv-V}.
		
		We now deal with the left-hand side of~\eqref{eq:passare-al-limite} and show that
		\begin{equation}
		\label{eq:conv-AgradP}
			A^\varepsilon\nabla P^\varepsilon \rightharpoonup A\nabla P \quad \mbox{weakly in } L^{2}_{\loc}(B_{r}).
		\end{equation}
		Recall that the matrices~$A$ and~$A^\varepsilon$, given in~\eqref{eq:def-A} and~\eqref{eq:defA-ep-esplicit} respectively, are well-defined and of class~$C^1$ in the set
		\begin{equation*}
			\mathcal{B}_r^\varepsilon \coloneqq B_{r} \setminus \left(\mathcal{Z}_{v^\varepsilon} \cup \mathcal{Z}_{v}\right),
		\end{equation*}
		which has full measure in~$B_{r}$. By~\eqref{eq:Pi}, we have
		\begin{equation}
		\label{eq:weak-conv-AgradP}
			\left(A^\varepsilon\nabla P^\varepsilon\right)_i = \alpha_{ij}^\varepsilon P_{j}^\varepsilon = - \frac{P^\varepsilon}{v^\varepsilon}\, \alpha^\varepsilon_{ij} v^\varepsilon_j + \frac{n \left(p-1\right)}{v^\varepsilon}\, \alpha^\varepsilon_{ij} \partial_j V^\varepsilon\!\left(\nabla v^\varepsilon\right) \quad \mbox{in } \mathcal{B}_r^\varepsilon,
		\end{equation}
		where, thanks to~\eqref{eq:defA-ep-esplicit}, we have
		\begin{equation*}
			\alpha^\varepsilon_{ij} = \left[\varepsilon^2+\abs*{\nabla v^\varepsilon}^2\right]^{ \frac{p-2}{2}} \delta_{ij} + \left(p-2\right) \left[\varepsilon^2+\abs*{\nabla v^\varepsilon}^2\right]^{ \frac{p-4}{2}} v^\varepsilon_i v^\varepsilon_j \quad \mbox{in } \mathcal{B}_r^\varepsilon.
		\end{equation*}
		Let~$\rho \in (0,r)$ be fixed. The latter, together with~\eqref{eq:ub3}, implies
		\begin{equation}
		\label{eq:bounded}
			\alpha^\varepsilon_{ij} v^\varepsilon_j \leq C \quad \mbox{in } \mathcal{B}_\rho^\varepsilon,
		\end{equation}
		for some~$C>0$ independent of~$\varepsilon$.
		
		We now study the pointwise convergence of~$A^\varepsilon\nabla P^\varepsilon$. To this end, we extract a subsequence~$\left\{\varepsilon_m\right\}_{m \in \N}$ and define
		\begin{equation*}
			\mathcal{B}_r \coloneqq \bigcap_{m \in \N} \mathcal{B}_r^{\varepsilon_m},
		\end{equation*}
		which is a Borel set of full measure in~$B_{r}$.
		
		Since~$\nabla_\xi^2 V^\varepsilon \to \nabla_\xi^2 V$ in~$\R^n \setminus \left\{0\right\}$, form~\eqref{eq:conv-vep}, we deduce
		\begin{equation*}
			\alpha^{\varepsilon_m}_{ij} v^{\varepsilon_m}_j \to \alpha_{ij} v_j \quad \mbox{in } \mathcal{B}_r
		\end{equation*}
		as~$m \to +\infty$. Taking advantage of~\eqref{eq:bounded}, the dominated convergence theorem entails
		\begin{equation}
		\label{eq:convL2-alpv}
			\alpha^{\varepsilon_m}_{ij} v^{\varepsilon_m}_j \to \alpha_{ij} v_j \quad \mbox{in } L^2(B_\rho).
		\end{equation}
		By using~\eqref{eq:djVep} and~\eqref{eq:defWep}, we obtain
		\begin{equation}
		\label{eq:id-alpV}
			\alpha^\varepsilon_{ij}\partial_j V^\varepsilon \!\left(\nabla v^\varepsilon\right) = \alpha^\varepsilon_{ij} a^\varepsilon_k \!\left(\nabla v^\varepsilon\right) v^\varepsilon_{kj} = a^\varepsilon_k \!\left(\nabla v^\varepsilon\right) \left(W^\varepsilon\right)_{ik} \quad \mbox{ in } \mathcal{B}_r,
		\end{equation}
		and, thanks to~\eqref{eq:conv-stress}, that
		\begin{equation}
		\label{eq:weakconv-aW}
			a^\varepsilon_k \!\left(\nabla v^\varepsilon\right) \left(W^\varepsilon\right)_{ik} \rightharpoonup a_k \!\left(\nabla v\right) W_{ik} \quad \mbox{weakly in } L^{2}(B_\rho).
		\end{equation}
		Furthermore, from~\eqref{eq:defW} and the chain rule~$\partial_j V \!\left(\nabla v\right) = a_k \!\left(\nabla v\right) v_{kj}$, we also get
		\begin{equation*}
			a_k \!\left(\nabla v\right) W_{ik} = a_k \!\left(\nabla v\right) \alpha_{ij} v_{jk} = \alpha_{ij} \partial_j V \!\left(\nabla v\right) \quad \mbox{in } \R^n \setminus \mathcal{Z}_v,
		\end{equation*}
		where we also used~\eqref{eq:regv}. The latter, together with~\eqref{eq:id-alpV} and~\eqref{eq:weakconv-aW}, yields
		\begin{equation}
		\label{eq:weak-conv-alpV}
			\alpha^\varepsilon_{ij}\partial_j V^\varepsilon \rightharpoonup \alpha_{ij} \partial_j V \!\left(\nabla v\right) \quad \mbox{weakly in } L^{2}(B_\rho).
		\end{equation}
	
		Piecing~\eqref{eq:conv-vep},~\eqref{eq:conv-P},~\eqref{eq:convL2-alpv}, and~\eqref{eq:weak-conv-alpV} with~\eqref{eq:weak-conv-AgradP}, we are led to
		\begin{equation*}
			\alpha_{ij}^{\varepsilon_m} P_{j}^{\varepsilon_m} \rightharpoonup - \frac{P}{v}\, \alpha_{ij} v_j + \frac{n \left(p-1\right)}{v}\, \alpha_{ij} \partial_j V \!\left(\nabla v\right) = \alpha_{ij} P_j \quad \mbox{weakly in } L^{2}(B_\rho),
		\end{equation*}
		where the last identity follows from~\eqref{eq:Pi}. As a consequence,~$A \nabla P \in L^2(B_\rho)$ and~\eqref{eq:conv-AgradP} follows since the previous argument hold for every subsequence~$\left\{\varepsilon_m\right\}_{m \in \N}$.
		
		We now handle the right-hand side of~\eqref{eq:passare-al-limite}. First, by exploiting~\eqref{eq:conv-vep},~\eqref{eq:conv-stress},~\eqref{eq:conv-P}, and~\eqref{eq:conv-AgradP}, we infer
		\begin{equation}
		\label{eq:conv-1}
			-n \left\langle A^\varepsilon\nabla P^\varepsilon, \nabla v^\varepsilon \right\rangle \rightharpoonup -n \left\langle A\nabla P, \nabla v \right\rangle \quad \mbox{weakly in } L^{2}_{\loc}(B_{r}),
		\end{equation}
		and
		\begin{multline}
		\label{eq:conv-2}
			-P^\varepsilon P -P^\varepsilon R + n \left(p-1\right) \bigl[ \left\langle \nabla P,  a^\varepsilon(\nabla v^\varepsilon) \right\rangle + \left\langle \nabla R,  a^\varepsilon(\nabla v^\varepsilon) \right\rangle \bigr] \to \\
			-P^2 -P R + n \left(p-1\right) \bigl[ \left\langle \nabla P,  \stressv \right\rangle + \left\langle \nabla R, \stressv \right\rangle \bigr] \quad \mbox{in } C^{0}_{\loc}(B_{r}).
		\end{multline}
		We shall now show that
		\begin{equation}
		\label{eq:weakconv-toprove}
			v^\varepsilon_j \,\partial_{\xi_i\xi_j\xi_k}V^\varepsilon(\nabla v^\varepsilon)v^\varepsilon_{ki} \rightharpoonup \left(p-2\right) \left(P+R\right) \quad \mbox{weakly in } L^{2}_{\loc}(B_{r}).
		\end{equation}
		We start by proving that
		\begin{align}
		\label{eq:reltrprove}
			v^\varepsilon_j \,\partial_{\xi_i\xi_j\xi_k}V^\varepsilon(\nabla v^\varepsilon)v^\varepsilon_{ki} &= \left(p-2\right) \left[1-\frac{\varepsilon^2}{\varepsilon^2 + \abs*{\nabla v^\varepsilon}^2}\right] \left(P+R\right) \\
		\notag
			&\quad+ 2 \left(p-2\right) \frac{\varepsilon^2 \, v^\varepsilon_i \, v^\varepsilon_k}{\left[\varepsilon^2 + \abs*{\nabla v^\varepsilon}^2\right]^2} \left[\varepsilon^2 + \abs*{\nabla v^\varepsilon}^2\right]^{\frac{p-2}{2}} v^\varepsilon_{ik} \quad \mbox{in } B_{r} \setminus \mathcal{Z}_{v^\varepsilon}.
		\end{align}
		In~$\R^n \setminus \left\{0\right\}$, a direct computation reveals
		\begin{gather}
		\notag
			a_i^\varepsilon(\xi) = \partial_{i}V^\varepsilon(\xi) = \left[\varepsilon^2 + \abs*{\xi}^2\right]^{\frac{p-2}{2}} \xi_i, \\
		\label{eq:Vij}
			\partial_{ij}V^\varepsilon(\xi) = \left[\varepsilon^2 + \abs*{\xi}^2\right]^{\frac{p-2}{2}} \delta_{ij} + \left(p-2\right) \left[\varepsilon^2 + \abs*{\xi}^2\right]^{\frac{p-4}{2}} \xi_i \,\xi_j,
		\end{gather}
		and
		\begin{equation*}
			\begin{split}
				\partial_{ijk}V^\varepsilon(\xi) &= \left(p-2\right) \left[\varepsilon^2 + \abs*{\xi}^2\right]^{\frac{p-4}{2}} \left(\xi_k \delta_{ij} + \xi_i \delta_{jk} +\xi_j \delta_{ik}\right) \\
				&\quad+ \left(p-2\right) \left(p-4\right) \left[\varepsilon^2 + \abs*{\xi}^2\right]^{\frac{p-6}{2}} \xi_i \xi_j \xi_k.
			\end{split}
		\end{equation*}
		The latter implies
		\begin{equation}
		\label{eq:torewrite}
			\begin{split}
				\partial_{ijk}V^\varepsilon(\xi) \xi_j &= \left(p-2\right) \left[\varepsilon^2 + \abs*{\xi}^2\right]^{\frac{p-4}{2}} \left(2 \xi_i \xi_k + \abs*{\xi}^2 \delta_{ik} \right) \\
				&\quad+ \left(p-2\right) \left(p-4\right) \left[\varepsilon^2 + \abs*{\xi}^2\right]^{\frac{p-6}{2}} \abs*{\xi}^2 \,\xi_i \xi_k \quad \mbox{in } \R^n \setminus \left\{0\right\}.
			\end{split}
		\end{equation}
		Finally, by~\eqref{eq:Vij}, we can rewrite~\eqref{eq:torewrite} in the form
		\begin{equation*}
			\partial_{ijk}V^\varepsilon(\xi) \xi_j = \left(p-2\right) \left[1-\frac{\varepsilon^2}{\varepsilon^2 + \abs*{\nabla v^\varepsilon}^2}\right] \partial_{ik}V^\varepsilon(\xi) + \frac{2 \left(p-2\right) \varepsilon^2 \,\xi_i \xi_k}{\left[\varepsilon^2 + \xi^2\right]^2} \left[\varepsilon^2 + \abs*{\xi}^2\right]^{\frac{p-2}{2}} \!.
		\end{equation*}
		Therefore, by~\eqref{eq:defA-ep}, we conclude that
		\begin{align*}
			v^\varepsilon_j \,\partial_{\xi_i\xi_j\xi_k}V^\varepsilon(\nabla v^\varepsilon)v^\varepsilon_{ki} &= \left(p-2\right) \left[1-\frac{\varepsilon^2}{\varepsilon^2 + \abs*{\nabla v^\varepsilon}^2}\right] \alpha_{ik}^\varepsilon v^\varepsilon_{ik}\\
			&\quad+ 2 \left(p-2\right) \frac{\varepsilon^2 \, v^\varepsilon_i \, v^\varepsilon_k}{\left[\varepsilon^2 + \abs*{\nabla v^\varepsilon}^2\right]^2} \left[\varepsilon^2 + \abs*{\nabla v^\varepsilon}^2\right]^{\frac{p-2}{2}} v^\varepsilon_{ik} \quad \mbox{in } B_{r} \setminus \mathcal{Z}_{v^\varepsilon},
		\end{align*}
		from which~\eqref{eq:reltrprove} follows by~\eqref{eq:trWep}.
		
		Since~$P, R \in L^\infty(\R^n)$, by the dominated convergence theorem
		\begin{equation}
		\label{eq:strongconv-lastterm}
			 \left(p-2\right) \left[1-\frac{\varepsilon^2}{\varepsilon^2 + \abs*{\nabla v^\varepsilon}^2}\right] \left(P+R\right) \to \left(p-2\right) \left(P+R\right) \quad \mbox{in } L^2_{\loc}(B_{r}).
		\end{equation}
		Then, we simply have
		\begin{equation*}
			\frac{\varepsilon^2 \,\abs*{v^\varepsilon_i \, v^\varepsilon_k}}{\left[\varepsilon^2 + \abs*{\nabla v^\varepsilon}^2\right]^2} \leq \frac{\varepsilon^2 \,\abs*{\nabla v^\varepsilon}^2}{\left[\varepsilon^2 + \abs*{\nabla v^\varepsilon}^2\right]^2} \leq \frac{1}{4}.
		\end{equation*}
		Thus, taking advantage of~\eqref{eq:ub1}, we deduce
		\begin{equation}
		\label{eq:L2bound}
			\begin{split}
				\int_{B_\rho} &\left\{ \varepsilon^2 \, v^\varepsilon_i \, v^\varepsilon_k \left[\varepsilon^2 + \abs*{\nabla v^\varepsilon}^2\right]^{-2} \left[\varepsilon^2 + \abs*{\nabla v^\varepsilon}^2\right]^{\frac{p-2}{2}} v^\varepsilon_{ik}\right\}^{\!2} dx \\
				&\leq \frac{1}{16} \, \norma*{\left[\varepsilon^2 + \abs*{\nabla v^\varepsilon}^2\right]^{\frac{p-2}{2}} D^2 v^\varepsilon}_{L^2(B_\rho)}^2 \leq C.
			\end{split}
		\end{equation}
		As above, we extract a subsequence~$\left\{\varepsilon_m\right\}_{m \in \N}$ and notice that
		\begin{equation}
		\label{eq:pointconv-frac}
			\frac{\varepsilon^2_m \, v^{\varepsilon_m}_i \, v^{\varepsilon_m}_k}{\left[\varepsilon_m^2 + \abs*{\nabla v^{\varepsilon_m}}^2\right]^2} \left[\varepsilon^2 + \abs*{\nabla v^\varepsilon}^2\right]^{\frac{p-2}{2}} v^{\varepsilon_m}_{ik} \to 0 \quad \mbox{in } \mathcal{B}_r.
		\end{equation}
		From~\eqref{eq:L2bound},~\eqref{eq:pointconv-frac}, and since the subsequence is arbitrary, we conclude that
		\begin{equation}
		\label{eq:weakconv-2term}
			2 \left(p-2\right) \frac{\varepsilon^2_m \, v^{\varepsilon_m}_i \, v^{\varepsilon_m}_k}{\left[\varepsilon_m^2 + \abs*{\nabla v^{\varepsilon_m}}^2\right]^2} \left[\varepsilon^2 + \abs*{\nabla v^\varepsilon}^2\right]^{\frac{p-2}{2}} v^{\varepsilon_m}_{ik} \rightharpoonup 0 \quad \mbox{weakly in } L^{2}_{\loc}(B_{r}).
		\end{equation}
		Hence,~\eqref{eq:weakconv-toprove} immediately follows from~\eqref{eq:strongconv-lastterm} and~\eqref{eq:weakconv-2term}.
		
		We now handle the remaining term in~\eqref{eq:passare-al-limite}, namely that involving~$\tr[W^\varepsilon]^2$, and notice that we cannot directly conclude by~\eqref{eq:conv-stress}. To this end, we write
		\begin{equation*}
			\tr[W^\varepsilon]^2 = \abs*{W^\varepsilon}^2 + \tr[W^\varepsilon]^2 - \abs*{W^\varepsilon}^2
		\end{equation*}
		and apply Lemma~\ref{lem:matrix-est} to~$W^\varepsilon = A^\varepsilon D^2v^\varepsilon$. Note that all these computations hold on~$B_r \setminus \left(\mathcal{Z}_{v^\varepsilon} \cup \mathcal{Z}_{v}\right)$ which has full measure in~$B_r$. Therefore, we deduce that
		\begin{equation}
		\label{eq:est-below-trW2}
			\tr[W^\varepsilon]^2 \geq \abs*{W^\varepsilon}^2 - c_{p,\varepsilon} \,\abs{\mathring{W}^\varepsilon}^2.
		\end{equation}
		In particular, the constant~$c_{p,\varepsilon}$ given in~\eqref{eq:def-c-gm} is determined by the ratio
		\begin{equation*}
			\varrho_{p,\varepsilon} \coloneqq \frac{\lambda_{\mathrm{min}}(A^\varepsilon)}{\lambda_{\mathrm{max}}(A^\varepsilon)}.
		\end{equation*}
		Moreover, if~$\varrho_{p}$ denotes the same ratio referred to~$A$, which is
		\begin{equation*}
			\varrho_{p} \coloneqq \frac{\min\left\{1,p-1\right\}}{\max\left\{1,p-1\right\}} = \left(p-1\right)^{\mathrm{sgn}(2-p)},
		\end{equation*}
		and a straightforward computation reveals that~$\varrho_{p,\varepsilon} \geq \varrho_{p}$. As a consequence, Lemma~\ref{lem:gu-mosc} yields~$c_{p,\varepsilon} \leq c_{p}$, where~$c_{p}$ is defined in~\eqref{eq:def-c-gm} and referred to~$A$, which depends only on~$p$. Combining this estimate with~\eqref{eq:est-below-trW2}, we obtain
		\begin{equation}
		\label{eq:est-below-trW2-2}
			\tr[W^\varepsilon]^2 \geq \abs*{W^\varepsilon}^2 - c_{p} \,\abs{\mathring{W}^\varepsilon}^2.
		\end{equation}
		Now, we notice that
		\begin{equation}
		\label{eq:norma-Wep-senzatr}
			\abs{\mathring{W}^\varepsilon}^2 = \abs{W^\varepsilon}^2 - \frac{1}{n} \left(\tr W^\varepsilon\right)^2 = \abs{W^\varepsilon}^2 - \frac{1}{n} \left(P+R\right)^2 \quad \mbox{a.e.\ in } B_r,
		\end{equation}
		where we used~\eqref{eq:trWep}. From~\eqref{eq:est-below-trW2-2} and~\eqref{eq:norma-Wep-senzatr}, we conclude that
		\begin{equation}
		\label{eq:trW2-final}
			\tr[W^\varepsilon]^2 \geq \left(1-c_p\right) \abs {\mathring{W}^\varepsilon}^2 + \frac{1}{n} \left(P+R\right)^2.
		\end{equation}
		
		Finally, we shall show that
		\begin{equation}
		\label{eq:liminf-senzatr}
			\liminf_{\varepsilon \to 0} \int_{B_{r}}(v^\varepsilon)^{1-n} \,\abs{\mathring{W}^\varepsilon}^2 P^t \phi \, dx \geq \int_{B_{r}} v^{1-n} \,\abs{\mathring{W}}^2 P^t \phi \, dx.
		\end{equation}
		We first prove that
		\begin{equation}
		\label{eq:liminf-toprove}
			\liminf_{\varepsilon \to 0} \int_{B_{r}}(v^\varepsilon)^{1-n} \,\abs*{\nabla a^\varepsilon \!\left(\nabla v^\varepsilon\right)}^2 P^t \phi \, dx \geq \int_{B_{r}} v^{1-n} \,\abs*{\nabla \stressv}^2 P^t \phi \, dx.
		\end{equation}
		From this,~\eqref{eq:conv-vep}, and~\eqref{eq:norma-Wep-senzatr}, we infer the validity of~\eqref{eq:liminf-senzatr}.
		
		We start by writing
		\begin{equation*}
			\begin{split}
				\int_{B_r}(v^\varepsilon)^{1-n} \,\abs*{\nabla a^\varepsilon \!\left(\nabla v^\varepsilon\right)}^2 P^t \phi \, dx - \int_{B_r} v^{1-n} \,\abs*{\nabla \stressv}^2 P^t \phi \, dx = I_1^\varepsilon + I_2^\varepsilon,
			\end{split}
		\end{equation*}
		where
		\begin{align*}
			I_1^\varepsilon &\coloneqq \int_{B_r} \left[(v^\varepsilon)^{1-n}-v^{1-n}\right] \,\abs*{\nabla a^\varepsilon \!\left(\nabla v^\varepsilon\right)}^2 \, P^t \phi \, dx, \\
			I_2^\varepsilon &\coloneqq \int_{B_r} v^{1-n} \left[\abs*{\nabla a^\varepsilon \!\left(\nabla v^\varepsilon\right)}^2 - \abs*{\nabla \stressv}^2\right] P^t \phi \, dx.
		\end{align*}
		Since~$v$ is bounded below, by~\eqref{eq:P-Linf},~\eqref{eq:conv-vep}, and~\eqref{eq:ub3}, we easily deduce that
		\begin{equation}
		\label{eq:limI1}
			\lim_{\varepsilon \to 0} I_1^\varepsilon = 0.
		\end{equation}
		Then, the weak lower semicontinuity of the~$L^2$-norm, together with~\eqref{eq:conv-stress}, yields
		\begin{equation*}
			\liminf_{\varepsilon \to 0} \int_{B_r} \,\abs*{\nabla a^\varepsilon \!\left(\nabla v^\varepsilon\right) v^{\frac{1-n}{2}} (P^t \phi)^{\frac{1}{2}}}^2 \, dx \geq \int_{B_r} v^{1-n} \,\abs*{\nabla a \!\left(\nabla v\right)}^2 \, P^t \phi \, dx. 
		\end{equation*}
		As a consequence, we infer that
		\begin{equation*}
			\liminf_{\varepsilon \to 0} I^\varepsilon_2 \geq 0.
		\end{equation*}
		This, together with~\eqref{eq:limI1}, implies the validity of~\eqref{eq:liminf-toprove}.
		
		By using~\eqref{eq:trW2-final}, we can rewrite~\eqref{eq:passare-al-limite} as
		\begin{align}
			\notag
			-\int_{B_{r}} &(v^\varepsilon)^{2-n} \left\langle A^\varepsilon\nabla P^\varepsilon, \nabla (P^t \phi) \right\rangle dx \geq\int_{B_{r}}(v^\varepsilon)^{1-n} \,\Bigl\{-n \left\langle A^\varepsilon\nabla P^\varepsilon, \nabla v^\varepsilon \right\rangle -P^\varepsilon P -P^\varepsilon R \\
			\notag
			& + n \left(p-1\right) \left[\left(1-c_p\right) \abs {\mathring{W}^\varepsilon}^2 + \frac{1}{n} \left(P+R\right)^2 + \left\langle \nabla P,  a^\varepsilon(\nabla v^\varepsilon) \right\rangle + \left\langle \nabla R,  a^\varepsilon(\nabla v^\varepsilon) \right\rangle \right] \\
			\label{eq:passare-al-limite-final}
			&-P^\varepsilon v^\varepsilon_j\,\partial_{\xi_i\xi_j\xi_k}V^\varepsilon(\nabla v^\varepsilon)v^\varepsilon_{ki}\Bigr\}\, P^t \phi \, dx.
		\end{align}
		Thus, by~\eqref{eq:conv-vep},~\eqref{eq:conv-AgradP},~\eqref{eq:conv-1},~\eqref{eq:conv-2},~\eqref{eq:weakconv-toprove}, and~\eqref{eq:liminf-toprove}, we let~$\varepsilon \to 0$ in~\eqref{eq:passare-al-limite-final} and deduce
		\begin{align}
		\notag
			-\int_{B_{r}} &v^{2-n} \left\langle A\nabla P, \nabla (P^t \phi) \right\rangle dx \geq n \left(p-1\right) \left(1-c_p\right) \int_{B_{r}} v^{1-n} \,\abs{\mathring{W}}^2 P^t \phi \, dx\\
		\label{eq:quasi-final}
			&+ \int_{B_{r}} v^{1-n} \,\bigl\{-n \left\langle A \nabla P, \nabla v \right\rangle + n \left(p-1\right) \bigl[\left\langle \nabla P,  \stressv \right\rangle + \left\langle \nabla R, \stressv \right\rangle \bigr]  \\
		\notag
			&\hspace{1cm} + \left(p-1\right) \left(P+R\right)^2 - \left(p-1\right) P \left(P+R\right) \bigr\}\, P^t \phi \, dx.
		\end{align}
		To conclude, we observe that by~$p$-homogeneity of~$V$, we have
		\begin{equation*}
			\alpha_{ij} v_i = \partial_{\xi_i\xi_j} V \!\left(\nabla v\right) v_i = \left(p-1\right) \partial_{\xi_j} V \!\left(\nabla v\right) = \left(p-1\right) a_j \!\left(\nabla v\right) \quad\mbox{in } \R^n \setminus \mathcal{Z}_v,
		\end{equation*}
		therefore
		\begin{equation}
		\label{eq:final-comp}
			\begin{split}
				-n \left\langle A \nabla P, \nabla v \right\rangle &= - n \,\alpha_{ij}  v_i P_j = -n \left(p-1\right)  P_j \, a_j \!\left(\nabla v\right) \\
				&= - n \left(p-1\right) \left\langle \nabla P,  \stressv \right\rangle \quad\mbox{a.e.\ in } \R^n.
			\end{split}
		\end{equation}
		From~\eqref{eq:quasi-final} and~\eqref{eq:final-comp}, we easily deduce~\eqref{eq:fund-ineq}.
	\end{proof}

	
	\subsection{Quantitative integral estimate.}
	\label{step:quant-est}
	
	We now aim to derive a quantitative estimate for an integral involving~$\abs{\mathring{W}}$, the traceless tensor defined in~\eqref{eq:tracless-W}, by exploiting Proposition~\ref{prop:fund-ineq}. Specifically, for any~$t \geq 1$, we shall prove that
	\begin{equation}
	\label{eq:fund-est-def}
		\int_{\R^n} v^{1-n} \,\abs{\mathring{W}}^2 P^t \, dx + \int_{\R^n} v^{2-n}\,\abs*{\nabla v}^{p-2} P^{t-1} \,\abs*{\nabla P}^{2} \, dx \leq C_\sharp \defi(u,\kappa),
	\end{equation}
	for some constant~$C_\sharp>0$ depending only on~$n$,~$p$,~$\norma*{\kappa}_{L^\infty(\R^n)}$, and~$t$.

	First, recalling~\eqref{eq:def-A}, note that
	\begin{equation}
	\label{eq:AgradP}
		A \nabla P = \abs*{\nabla v}^{p-2} \,\nabla P + \left(p-2\right) \abs*{\nabla v}^{p-4} \left\langle \nabla v, \nabla P \right\rangle \nabla v \quad\mbox{a.e.\ in } \R^n,
	\end{equation}
	therefore
	\begin{equation*}
		\left\langle A \nabla P , \nabla P \right\rangle = \abs*{\nabla v}^{p-2} \,\abs*{\nabla P}^2 + \left(p-2\right) \abs*{\nabla v}^{p-4} \,\abs*{\left\langle \nabla v, \nabla P \right\rangle}^2 \quad\mbox{a.e.\ in } \R^n.
	\end{equation*}
	Let us now define the annulus~$A_r \coloneqq B_{2r} \setminus B_r$. Moreover, let~$\phi \in C^\infty_c(\R^n)$ be a cut-off function such that~$0 \leq \phi \leq 1$, with~$\phi=1$ in~$B_{r}$,~$\phi=0$ in~$\R^n \setminus B_{2r}$, and~$\abs*{\nabla \phi} \leq 1/r$ in~$A_{r}$. By using~$\phi^2$ in~\eqref{eq:fund-ineq} as a test function, we obtain
	\begin{multline}
	\label{eq:est1}
		n \left(p-1\right) \left(1-c_p\right) \int_{B_{2r}} v^{1-n} \,\abs{\mathring{W}}^2 P^t \phi^2 \, dx + t \int_{B_{2r}} v^{2-n}\,\abs*{\nabla v}^{p-2} P^{t-1} \,\abs*{\nabla P}^{2} \phi^2 \, dx \\
		\leq I_1^r + I_2^r + I_3^r + I_4^r,
	\end{multline}
	where
	\begin{align*}
		I_1^r &\coloneqq \left(2-p\right) t \int_{B_{2r}} v^{2-n} \,\abs*{\nabla v}^{p-4} P^{t-1} \,\abs*{\left\langle \nabla v, \nabla P\right\rangle}^2 \,\phi^2  \, dx, \\
		I_2^r &\coloneqq - 2\int_{A_{r}} v^{2-n} \left\langle A\nabla P, \nabla \phi \right\rangle  P^t \phi \, dx, \\
		I_3^r &\coloneqq \left(1-p\right) \int_{B_{2r}} v^{1-n} \left(R^2+PR\right) P^t \phi^2 \, dx, \\
		I_4^r &\coloneqq n \left(1-p\right) \int_{B_{2r}} v^{1-n}  \left\langle \nabla R, \stressv \right\rangle P^t \phi^2 \, dx.
	\end{align*}
	Now, we shall estimate each of these integrals. In the following calculations~$C$ and~$C'$ will be positive constants, possibly differing from line to line, which do not depend on~$r$, but only on~$n$,~$p$,~$\norma*{\kappa}_{L^\infty(\R^n)}$, and~$t$.
	
	For the first integral, we trivially have
	\begin{equation}
	\label{eq:est2}
		I_1^r \leq \left(2-p\right)_{+} t \int_{B_{2r}} v^{2-n} \,\abs*{\nabla v}^{p-2} \,\abs*{\nabla P}^2 \,\phi^2  \, dx,
	\end{equation}
	and notice that~$1-\left(2-p\right)_{+} = \min \left\{p-1,1\right\}$.
	
	For~$I_2^r$, taking advantage of~\eqref{eq:trW} and~\eqref{eq:gradP}, we deduce
	\begin{equation*}
		\abs*{\nabla P} \leq C \left(v^{-1} \,\abs{\mathring{W}} \abs*{\nabla v} + v^{-1} \,\abs*{\nabla v} \abs{R} \right).
	\end{equation*}
	Therefore, by exploiting~\eqref{eq:AgradP} and applying Young's inequality, we get
	\begin{align}
	\notag
		\abs*{I_2^r} &\leq C \int_{A_{r}} v^{2-n} \,\abs*{\nabla v}^{p-2} \,\abs*{\nabla P} \abs*{\nabla \phi} P^t \phi \, dx \\
	\notag
		&\leq C \int_{A_r} v^{1-n} \,\abs*{\nabla v}^{p-1} \,\abs{\mathring{W}} \,\abs*{\nabla\phi} P^t  \phi \, dx +  C' \int_{A_r} v^{1-n} \,\abs*{\nabla v}^{p-1} \,\abs*{\nabla\phi} \abs{R} P^t \phi \, dx \\
	\notag
		&\leq \frac{n}{2} \left(p-1\right) \left(1-c_p\right) \int_{B_{2r}} v^{1-n} \,\abs{\mathring{W}}^2 P^t  \phi^2 \, dx +  C \int_{A_r} v^{1-n} \,\abs*{\nabla v}^{2(p-1)} \,\abs*{\nabla\phi}^2 P^t  \, dx \\
	\label{eq:est3}
		&\quad+ C' \int_{A_r} v^{1-n} \,\abs*{\nabla v}^{p-1} \,\abs*{\nabla\phi} \abs{R} P^t \phi \, dx.
	\end{align}

	We then observe that, for~$r$ large enough, by exploiting~\eqref{eq:defPfunct},~\eqref{eq:D1p},~\eqref{eq:bound-gradv}, and since~$t \geq 1$, we have
	\begin{equation}
	\label{eq:est4}
		\int_{A_r} v^{1-n} \,\abs*{\nabla v}^{2\left(p-1\right)} \,\abs*{\nabla\phi}^2 P^t \, dx \leq C \int_{A_r} v^{-n} + v^{-n} \,\abs*{\nabla v}^p \, dx \to 0,
	\end{equation}
	as~$r \to +\infty$, and analogously, by exploiting~\eqref{eq:defRem},~\eqref{eq:D1p},~\eqref{eq:bound-gradv}, and~\eqref{eq:P-Linf}, we get
	\begin{equation}
	\label{eq:est4'}
		\int_{A_r} v^{1-n} \,\abs*{\nabla v}^{p-1} \,\abs*{\nabla\phi} \abs{R} P^t \phi \, dx \leq C \int_{A_r} v^{-n} \, dx \to 0,
	\end{equation}
	as~$r \to +\infty$.
	
	For the third integral $I_3^r$, since~$P$ satisfies~\eqref{eq:P-Linf}, we start by writing
	\begin{equation*}
		I_3^r \leq C \int_{B_{2r}} v^{1-n} R^2 \,\phi^2 \, dx + C' \int_{B_{2r}} v^{1-n} \,\abs{R} P \,\phi^2 \, dx.
	\end{equation*}
	Then, by~\eqref{eq:defRem},~\eqref{eq:P-Linf}, and  H\"older inequality, we have
	\begin{multline}
	\label{eq:similar}
			\int_{B_{2r}} v^{1-n} \,\abs{R} P \,\phi^2 \, dx \leq C \int_{B_{2r}} v^{-n} \,\abs{\kappa-1} \, dx \\
			\leq C \left(\int_{B_{2r}} v^{-n} \, dx\right)^{\!\!\frac{1}{\past}} \left(\int_{B_{2r}} v^{-n} \,\abs{\kappa-1}^{(\past)'} \, dx\right)^{\!\!\frac{1}{(\past)'}} \leq C \defi(u,\kappa).
	\end{multline}
	Analogously, exploiting also~\eqref{eq:R-Linf}, we deduce
	\begin{equation*}
		\int_{B_{2r}} v^{1-n} R^2 \,\phi^2 \, dx \leq C \defi(u,\kappa).
	\end{equation*}
	As a result,
	\begin{equation}
	\label{eq:est5}
		\abs*{I_3^r} \leq C \defi(u,\kappa).
	\end{equation}

	For~$I_4^r$ we perform an integration by parts to obtain
	\begin{equation}
	\label{eq:est6}
		I_4^r = n \left(1-p\right) \left(J_1^r+J_2^r+J_3^r+J_4^r\right) \!,
	\end{equation}
	where
	\begin{align*}
		J_1^r &\coloneqq \left(1-n\right) \int_{B_{2r}} v^{-n}  \left\langle \nabla v,  \stressv \right\rangle  RP^t \phi^2 \, dx, \\
		J_2^r &\coloneqq t \int_{B_{2r}} v^{1-n}  \left\langle \nabla P, \stressv \right\rangle R P^{t-1} \phi^2 \, dx, \\
		J_3^r &\coloneqq  2 \int_{A_{r}} v^{1-n}  \left\langle \nabla \phi, \stressv \right\rangle  RP^t \phi \, dx, \\
		J_4^r &\coloneqq \int_{B_{2r}} v^{1-n} \left[\tr W \right] RP^t \phi^2 \, dx = \int_{B_{2r}} v^{1-n} \left(P+R\right) RP^t \phi^2 \, dx.
	\end{align*}

	For~$J_1^r$ we exploit~\eqref{eq:defRem},~\eqref{eq:P-Linf}, and~\eqref{eq:bound-fgrad} to deduce
	\begin{equation}
	\label{eq:est7}
		\abs*{J_1^r} \leq C \int_{B_{2r}} v^{-n} \,\abs*{\kappa-1} \, v^{-1}\,\abs*{\nabla v}^p \, dx \leq C \defi(u,\kappa),
	\end{equation}
	arguing as for~\eqref{eq:similar}.
	
	For the second integral we have
	\begin{align*}
	\notag
		\abs{J_2^r} &\leq t \int_{B_{2r}} v^{1-n}  \, \abs*{\nabla P} \abs*{\nabla v}^{p-1} \abs{R} P^{t-1} \phi^2 \, dx \leq C \int_{B_{2r}} v^{-n}  \, \abs*{\nabla P} \abs*{\nabla v}^{p-1} \,\abs*{\kappa-1} P^{t-1} \phi^2 \, dx \\
	\notag
		&= C \int_{B_{2r}} v^{\frac{2-n}{2}} \,\abs*{\nabla v}^{\frac{p-2}{2}} P^{\frac{t-1}{2}} \, \abs*{\nabla P} \,\phi \; v^{-\frac{n}{2}-1} \,\abs*{\nabla v}^{\frac{p}{2}} \,\abs*{\kappa-1} P^{\frac{t-1}{2}} \phi \, dx \\
	\notag
		&\leq \frac{t}{2} \min \left\{p-1,1\right\} \int_{B_{2r}} v^{2-n} \,\abs*{\nabla v}^{p-2} P^{t-1} \, \abs*{\nabla P}^2 \,\phi^2 \, dx \\
	\notag
		&\quad+ C \int_{B_{2r}} v^{-n-2} \,\abs*{\nabla v}^{p} \,\abs*{\kappa-1}^2 P^{t-1} \phi^2 \, dx \\
	\notag
		&\leq \frac{t}{2} \min \left\{p-1,1\right\} \int_{B_{2r}} v^{2-n} \,\abs*{\nabla v}^{p-2} P^{t-1} \,\abs*{\nabla P}^2 \,\phi^2 \, dx \\
	\notag
		&\quad+ C \,\norma*{\kappa-1}_{L^\infty(\R^n)} \int_{B_{2r}} v^{-2} \,\abs*{\nabla v}^{p} \, v^{-n} \,\abs*{\kappa-1} P^{t-1} \phi^2 \, dx.
	\end{align*}	
	Hence, using~\eqref{eq:bound-fgrad} and that~$v$ is bounded below in the last inequality, we obtain
	\begin{equation}
	\label{eq:est8}
		\abs{J_2^r} \leq \frac{t}{2} \min \left\{p-1,1\right\} \int_{B_{2r}} v^{2-n} \,\abs*{\nabla v}^{p-2} P^{t-1} \,\abs*{\nabla P}^2 \, dx + C \defi(u,\kappa).
	\end{equation}
	
	For~$J_3^r$ we exploit~\eqref{eq:defRem},~\eqref{eq:P-Linf}, and~\eqref{eq:D1p} to deduce that, for~$r$ sufficiently large,
	\begin{equation}
	\label{eq:est9}
		\abs*{J_3^r} \leq C \int_{A_{r}} v^{-n} \,\abs*{\nabla v}^{p-1} \,\abs*{\nabla \phi} \, dx \leq C \int_{A_{r}} v^{-n} \, dx \to 0, 
	\end{equation}
	as~$r \to +\infty$.
	
	The fourth term can be handled in a completely analogous way to~$I_3^r$, yielding
	\begin{equation}
	\label{eq:est10}
		\abs*{J_4^r} \leq C \defi(u,\kappa).
	\end{equation}
	
	By collecting~\eqref{eq:est1}--\eqref{eq:est4'} with~\eqref{eq:est5}--\eqref{eq:est10}, letting~$r \to +\infty$, and applying Fatou's lemma, we infer~\eqref{eq:fund-est-def}.
	
	
	\subsection{Fundamental estimate for~$W$.}
	\label{step:fund-W}
	
	Let~$r>1$ to be determined later in dependence of~$\defi(u,\kappa)$. We will choose~$r$ in such a way that~$r \to +\infty$ as~$\defi(u,\kappa) \to 0$.
	
	We aim to derive an estimate for~$\nabla \stressv$ in~$L^q_{w}(B_r)$ with~$q \in [1,2]$ and weight~$w\coloneqq v^{-n}$. Specifically, we will show that, by appropriately selecting a constant~$\mu$, it follows that
	\begin{equation}
	\label{eq:int-to-prove}
		\int_{B_{r}} v^{-n} \,\abs*{\nabla \stressv - \mu \Id}^q \, dx \leq C_\flat \,\mathscr{F}_{\! q} \quad \mbox{for every } q \in [1,2],
	\end{equation}
	for some~$C_\flat>0$ and some function of the deficit~$\mathscr{F}_{\! q}$.
	
	Notice that, by definition of the~$P$-function~\eqref{eq:defPfunct},~\eqref{eq:fund-est-def} with~$t=1$ easily yields
	\begin{equation}
		\label{eq:stima-def}
		\int_{\R^n} v^{-n} \,\abs{\mathring{W}}^2 \, dx \leq C_4 \defi(u,\kappa),
	\end{equation}
	for some~$C_4>0$ depending only  on~$n$,~$p$, and~$\norma*{\kappa}_{L^\infty(\R^n)}$. Therefore, by H\"older inequality, we deduce
	\begin{equation}
	\label{eq:est-W-traceless}
		\int_{\R^n} v^{-n}\,\abs{\mathring{W}}^q \, dx \leq \left(\int_{\R^n} v^{-n}\,\abs{\mathring{W}}^2 \, dx \right)^{\!\!\frac{q}{2}} \left(\int_{\R^n} v^{-n} \, dx \right)^{\!\!\frac{2-q}{2}} \leq C_5 \defi(u,\kappa)^{\frac{q}{2}},
	\end{equation}
	for some~$C_5>0$ depending only  on~$n$,~$p$,~$\norma*{\kappa}_{L^\infty(\R^n)}$, and~$q$.
	
	Recall that in~\ref{step:decay} we have identified a point~$x_0 \in \R^n$ where~$u$ attains its maximum. Consequently,~$v$ attains its minimum at~$x_0$. By taking advantage of~\eqref{eq:bounds-v}, we can quantitatively locate this point, indeed we must have
	\begin{equation}
		\label{eq:bound-vx0}
		v(x_0) \leq v(0) \leq \widehat{C}_0.
	\end{equation}
	Hence, by setting
	\begin{equation}
		\label{eq:defR-rad}
		\mathscr{R} \coloneqq \left(\frac{\widehat{C}_0-\hat{c}_0}{\hat{c}_0}\right)^{\!\!\frac{p-1}{p}},
	\end{equation}
	which depends only on~$n$,~$p$, and~$\norma*{\kappa}_{L^\infty(\R^n)}$, we can conclude that~$x_0 \in B_{\mathscr{R}}$. Let~$\mathsf{t} \in (0,1)$ to be determined later on. From this, we have~$B_\mathsf{t}(x_0) \subseteq B_r$ provided that
	\begin{equation}
		\label{eq:cond-r}
		r \geq \mathscr{R}+2.
	\end{equation}

	We now define
	\begin{equation*}
		\overline{P} \coloneqq \dashint_{B_\mathsf{t}(x_0)} P \, dx
	\end{equation*}
	and observe that, by exploiting~\eqref{eq:trW}, we can write
	\begin{multline}
	\label{eq:norma-L1-final}
		\int_{B_{r}} v^{-n}\,\abs*{W - \frac{\overline{P}}{n} \Id}^q \, dx \\
		\leq 4^{q-1} \int_{B_{r}} v^{-n}\,\abs{\mathring{W}}^q \, dx + 4^{q-1} \int_{B_{r}} v^{-n}\,\abs{P-\overline{P}}^q \, dx 
 		+ 4^{q-1} \int_{B_{r}} v^{-n}\,\abs*{R}^q \, dx \\ 
 		\eqqcolon 4^{q-1} \left\{\int_{B_{r}} v^{-n}\,\abs{\mathring{W}}^q \, dx + \mathcal{I}_1 + \mathcal{I}_2 \right\}\!.
	\end{multline}
	We shall proceed with the estimate of both~$\mathcal{I}_1$ and~$\mathcal{I}_2$. In the subsequent calculations~$C$ will denote a positive constant, possibly differing from line to line, which depends only on~$n$,~$p$,~$\norma*{\kappa}_{L^\infty(\R^n)}$, and~$q$.
	
	By~\eqref{eq:regP}, the Poincar\'e inequality of Equation~(7.45) in~\cite{gt},~\eqref{eq:bounds-v}, and since~$v$ is bounded below, we get 
	\begin{equation}
		\label{eq:est-1'}
		\mathcal{I}_1 \leq C \, r^{\frac{np}{p-1}} \, \mathcal{C}_P \!\left(r,\mathsf{t}\right)^q \int_{B_{r}} v^{-n}\,\abs{\nabla P}^q \, dx,
	\end{equation}
	where
	\begin{equation}
		\label{eq:def-CP}
		\mathcal{C}_P \!\left(r,\mathsf{t}\right) \coloneqq 2^n r^n \, \mathsf{t}^{1-n}.
	\end{equation}
	From~\eqref{eq:trW} and~\eqref{eq:gradP}, we deduce
	\begin{align}
	\notag
		\int_{B_{r}} v^{-n}\,\abs{\nabla P}^q \, dx &\leq C \int_{B_{r}} v^{-n-q} \,\abs{\mathring{W}}^q\abs{\nabla v}^q \, dx + C \int_{B_{r}} v^{-n-q} \,\abs{\nabla v}^q \abs{R}^q \, dx \\
	\label{eq:est-2'}
		&\eqqcolon \mathcal{J}_1 + \mathcal{J}_2.
	\end{align}
	Exploiting the definition of the remainder~\eqref{eq:defRem},~\eqref{eq:bound-fgrad}, and that~$v$ is bounded below, we have
	\begin{align}
	\notag
		\mathcal{J}_2 &\leq C \int_{B_{r}} v^{-n} v^{-2q+\frac{q}{p}} \, v^{-\frac{q}{p}}\,\abs{\nabla v}^q \abs*{\kappa-1}^q \, dx \leq C \int_{B_{r}} v^{-n} \, \abs*{\kappa-1} \, dx \\
	\label{eq:est-3'}
		&\leq C \defi(u,\kappa),
	\end{align}
	where for the last inequality one argues as for~\eqref{eq:similar}. In a similar fashion, we get
	\begin{equation}
	\label{eq:est-4'}
		\mathcal{J}_1 \leq C \int_{B_{r}} v^{-n}v^{-q+\frac{q}{p}} \, v^{-\frac{q}{p}}\,\abs{\nabla v}^q \abs{\mathring{W}}^q \, dx \leq C \defi(u,\kappa)^{\frac{q}{2}},
	\end{equation}
	where we also used~\eqref{eq:est-W-traceless}.
	
	For the second integral, by~\eqref{eq:defRem} and since~$v$ is bounded below, we have
	\begin{equation}
	\label{eq:est-5'}
		\mathcal{I}_2 \leq C \int_{B_r} v^{-n-q} \,\abs*{\kappa-1}^q \, dx \leq C \int_{B_r} v^{-n} \,\abs*{\kappa-1} \, dx \leq C \defi(u,\kappa).
	\end{equation}

	Since~\eqref{eq:def-small} is in force and we may assume~$r>1$, combining~\eqref{eq:norma-L1-final} with~\eqref{eq:est-W-traceless},~\eqref{eq:est-1'}, and~\eqref{eq:est-2'}--\eqref{eq:est-5'}, we deduce that
	\begin{equation*}
	\label{eq:stima-L1-gradA}
		\int_{B_{r}} v^{-n} \,\abs*{\nabla \stressv - \frac{\overline{P}}{n} \Id}^q \, dx \leq C_\flat \, r^{\frac{np}{p-1}} \left[1+ \mathcal{C}_P \!\left(r,\mathsf{t}\right)^q\right] \defi(u,\kappa)^{\frac{q}{2}},
	\end{equation*}
	for some~$C_\flat>0$, depending only on~$n$,~$p$,~$\norma*{\kappa}_{L^\infty(\R^n)}$, and~$q$. This is precisely~\eqref{eq:int-to-prove} with
	\begin{equation*}
		\mathscr{F}_{\! q} \coloneqq r^{\frac{np}{p-1}} \left[1+ \mathcal{C}_P \!\left(r,\mathsf{t}\right)^q\right] \defi(u,\kappa)^{\frac{q}{2}} \quad\mbox{and}\quad \mu = \frac{\overline{P}}{n},
	\end{equation*}
	where~$\mathcal{C}_P \!\left(r,\mathsf{t}\right)$ is given by~\eqref{eq:def-CP}.
	
	
	\subsection{Construction of the approximating functions.}
	\label{step:approx-func}
	
	Our first goal here is to construct a function~$\mathsf{Q}$ that approximates~$v$ in~$L^p_{\omega}(B_{r})$, for~$\omega \coloneqq v^{-n-p+1}$, and whose gradient approximates~$\nabla v$ in~$L^p_{w}(B_{r})$, with~$w=v^{-n}$. This will mainly follow from~\eqref{eq:int-to-prove}. Indeed, it captures proximity information between~$W$ and a suitable multiple of~$\Id$, which is arguably the strongest information one might hope to obtain.
	
	We start by defining the function
	\begin{equation*}
		\mathsf{Q}(x) \coloneqq v(x_0) + \frac{p-1}{p} \left(\frac{\overline{P}}{n}\right)^{\!\frac{1}{p-1}} \abs*{x-x_0}^{\frac{p}{p-1}},
	\end{equation*}
	which is of class~$C^\infty(\R^n \setminus \left\{x_0\right\})$ with~$\mathcal{Z}_{\mathsf{Q}} = \left\{x_0\right\}$. Moreover, up to uniquely extending the stress field on~$\mathcal{Z}_{\mathsf{Q}}$, we notice that
	\begin{gather}
	\label{eq:stressQ}
		a \!\left(\nabla \mathsf{Q}\right)\!(x) = \frac{\overline{P}}{n} \left(x-x_0\right) \quad\mbox{for } x \in \R^n, \\
	\notag
		\nabla a \!\left(\nabla \mathsf{Q}\right)\!(x) = \frac{\overline{P}}{n} \Id \quad\mbox{for } x \in \R^n.
	\end{gather}
	Therefore, we have~$a \!\left(\nabla \mathsf{Q}\right) \in C^\infty(\R^n)$ and, from~\eqref{eq:int-to-prove}, we deduce that
	\begin{equation}
	\label{eq:to-rewrite}
		\int_{B_{r}} v^{-n} \,\abs*{\nabla \stressv - \nabla a \!\left(\nabla \mathsf{Q}\right)}^q \, dx \leq C_\flat \,\mathscr{F}_{\! q} \quad \mbox{for every } q \in [1,2].
	\end{equation}
	For convenience of notation, we set
	\begin{equation*}
	\label{eq:def-rho}
		\zeta \coloneqq \stressv - a \!\left(\nabla \mathsf{Q}\right),
	\end{equation*}
	which is well-defined in~$\R^n$ since~$\stressv$ has been extended to zero on~$\mathcal{Z}_v$, and notice that we can rewrite~\eqref{eq:to-rewrite} as
	\begin{equation}
	\label{eq:stimagrad}
		\int_{B_{r}} v^{-n}\,\abs*{\nabla \zeta}^q \, dx \leq C_\flat \,\mathscr{F}_{\! q} \quad \mbox{for every } q \in [1,2],
	\end{equation}
	with~$\zeta \in W^{1,2}_{\loc}(\R^n)$ by~\eqref{eq:regv}.
	
	We now introduce the small parameter~$\tau \in (0,1)$, to be chosen later, and define, for each component of~$\zeta$,
	\begin{equation*}
		\overline{\zeta_{i}} \coloneqq \dashint_{B_\tau(x_0)} \zeta_{i} \, dx = \dashint_{B_\tau(x_0)} a_i \!\left(\nabla v\right) dx,
	\end{equation*}
	where we used that, by~\eqref{eq:stressQ}, each component of~$a \!\left(\nabla \mathsf{Q}\right)$ is harmonic in~$\R^n$. Therefore, we get
	\begin{equation}
	\label{eq:est1-media}
		\abs{\overline{\zeta_{i}}} \leq \dashint_{B_\tau(x_0)} \,\abs*{a_i \!\left(\nabla v\right)} \, dx .
	\end{equation}
	Furthermore, we also have
	\begin{equation}
	\label{eq:est2-media}
		\begin{split}
			\norma*{\stressv}_{L^1(B_{\tau}(x_0))} &\leq \norma*{\nabla v}_{L^{p-1}(B_{\tau}(x_0) )}^{p-1} = \norma*{\nabla v - \nabla v(x_0)}_{L^{p-1}(B_{\tau}(x_0))}^{p-1} \\
			&\leq C \tau^{n+\alpha\left(p-1\right)},
		\end{split}
	\end{equation}
	for some~$C>0$ depending only on~$n$,~$p$, and~$\norma*{\kappa}_{L^\infty(\R^n)}$.
	
	Note that this is the only one of two points in the proof where we exploit~$C^{1,\alpha}$ estimates in a quantitative form. We remark that this is possible since, by taking advantage of~\eqref{eq:bounds-v}, of the fact that~$x_0 \in B_{\mathscr{R}}$, and of Theorem~1.1 in~\cite{diben}, there exist~$C>0$ and~$\alpha \in (0,1)$, depending at most on~$n$,~$p$, and~$\norma*{\kappa}_{L^\infty(\R^n)}$, such that
	\begin{equation*}
		\abs*{\nabla v (x)- \nabla v (x_0)} \leq C \,\abs*{x-x_0}^\alpha \quad \mbox{for every } x \in B_2(x_0).
	\end{equation*}
	Additionally, by Theorem~3 in~\cite{teix}, we can assume that~$\alpha \in (0,\alpha_M)$, where~$\alpha_M \in (0,1)$ is the maximal exponent of H\"older regularity for the gradient of~$p$-harmonic functions, which depends only on~$n$ and~$p$. As a consequence,~$\alpha$ depends only on~$n$ and~$p$.
	
	As a result,~\eqref{eq:est1-media} simplifies to
	\begin{equation}
	\label{eq:L1-toest-1}
		\abs{\overline{\zeta_{i}}} \leq C \tau^{\alpha\left(p-1\right)},
	\end{equation}
	for some~$C>0$ depending only on~$n$,~$p$, and~$\norma*{\kappa}_{L^\infty(\R^n)}$.
	
	Observe that~\eqref{eq:cond-r} yields that~$B_\tau(x_0) \subseteq B_r$, hence, by applying once more the Poincar\'e inequality of Equation~(7.45) in~\cite{gt}, we get
	\begin{equation}
	\label{eq:L1-toest-2}
		\norma{\zeta_{i} - \overline{\zeta_{i}}}_{L^q_{w}(B_{r})}^q \leq C \, r^{\frac{np}{p-1}} \, \mathcal{C}_P \!\left(r,\tau\right)^q \int_{B_{r}} v^{-n}\,\abs*{\nabla \zeta}^q \, dx \leq C \, r^{\frac{np}{p-1}} \, \mathcal{C}_P \!\left(r,\tau\right)^q \mathscr{F}_{\! q},
	\end{equation}
	where~$\mathcal{C}_P \!\left(r,\tau\right)$ is given by~\eqref{eq:def-CP} and we used~\eqref{eq:stimagrad}.
	
	Since for each component of~$\zeta$ we have
	\begin{equation*}
		\norma{\zeta_{i}}_{L^q_{w}(B_{r})}^q \leq 2^{q-1}\norma{\zeta_{i} - \overline{\zeta_{i}}}_{L^q_{w}(B_{r})}^q + 2^{q-1} \abs{\overline{\zeta_{i}}}^q \,\norma{v^{-n}}_{L^1(\R^n)},
	\end{equation*}
	from~\eqref{eq:L1-toest-1} and~\eqref{eq:L1-toest-2}, we conclude that
	\begin{equation}
	\label{eq:L1-stress}
		\norma{\zeta}_{L^q_{w}(B_{r})}^q \leq C_6 \left[r^{\frac{np}{p-1}} \, \mathcal{C}_P \!\left(r,\tau\right)^q \mathscr{F}_{\! q}+ \tau^{\alpha q\left(p-1\right)} \right] \eqqcolon C_6 \,\mathscr{G}_{q},
	\end{equation}
	for some~$C_6>0$ depending only on~$n$,~$p$,~$\norma*{\kappa}_{L^\infty(\R^n)}$, and~$q$.

	Note that~\eqref{eq:L1-stress} encloses a smallness information at the level of the stress field. We aim to transfer it at the level of the gradient first and of the function~$v-\mathsf{Q}$ later.
	
	It is well-known -- see, for instance,~\cite[(I), Chapter 12]{lindq-plap} -- that for~$p \geq 2$ it holds
	\begin{equation*}
		\abs*{b-a}^p \leq 2^{p-2} \left\langle \abs*{b}^{p-2} \, b - \abs*{a}^{p-2} \, a , b-a\right\rangle \quad \mbox{for every } a,b \in \R^n,
	\end{equation*}
	from which
	\begin{equation*}
		 \abs*{b-a}^{p} \leq 2^{\frac{p(p-2)}{p-1}} \,\abs*{\abs*{b}^{p-2} \, b - \abs*{a}^{p-2} \, a }^{\frac{p}{p-1}} \quad \mbox{for every } a,b \in \R^n.
	\end{equation*}
	Thus, for~$p \geq 2$,~\eqref{eq:L1-stress} entails
	\begin{equation}
		\label{eq:norma-p-grad->2}
		\int_{B_{r}} v^{-n}\,\abs*{\nabla \!\left(v-\mathsf{Q}\right) }^{p} \, dx \leq  2^{\frac{p(p-2)}{p-1}} C_6 \,\mathscr{G}_{\frac{p}{p-1}}.
	\end{equation}
	
	For~$1<p \leq 2$, it is well-established -- see, for instance,~\cite[(VII), Chapter 12]{lindq-plap} -- that
	\begin{equation*}
		\left(p-1\right) \left(1+\abs*{a}^2+\abs*{b}^2\right)^{\!\!\frac{p-2}{2}}\abs*{b-a}^2 \leq \left\langle \abs*{b}^{p-2} \, b - \abs*{a}^{p-2} \, a , b-a\right\rangle \quad \mbox{for all } a,b \in \R^n \setminus \left\{0\right\},
	\end{equation*}
	from which
	\begin{equation*}
		\abs*{b-a}^p \leq \frac{1}{\left(p-1\right)^p} \left(1+\abs*{a}^2+\abs*{b}^2\right)^{\!\!\frac{p(2-p)}{2}} \abs*{\abs*{b}^{p-2} \, b - \abs*{a}^{p-2} \, a }^p \quad \mbox{for all } a,b \in \R^n \setminus \left\{0\right\}.
	\end{equation*}
	We also notice that, taking advantage of~\eqref{eq:bound-gradv} and of the definition of~$\mathsf{Q}$, it holds
	\begin{equation}
	\label{eq:bounds-grad-E}
		\abs*{\nabla v} \leq 2 C_1 \, r^{\frac{1}{p-1}} \quad\mbox{and}\quad \abs*{\nabla \mathsf{Q}} \leq C \, r^{\frac{1}{p-1}} \quad\mbox{in } B_r,
	\end{equation}
	for some~$C>0$ depending only on~$n$,~$p$, and~$\norma*{\kappa}_{L^\infty(\R^n)}$.
	As a consequence, by exploiting~\eqref{eq:L1-stress} and~\eqref{eq:bounds-grad-E}, we get
	\begin{align}
	\notag
		\int_{B_{r}} v^{-n}\,\abs*{\nabla \!\left(v-\mathsf{Q}\right)}^p \, dx &\leq \frac{1}{\left(p-1\right)^p} \int_{B_{r}} v^{-n}\left(1+\abs*{\nabla v}^2+\abs*{\nabla \mathsf{Q}}^2\right)^{\!\!\frac{p(2-p)}{2}} \abs*{\zeta}^p \, dx \\
	\label{eq:norma-p-1-<2}
		&\leq C \, r^{\frac{p(2-p)}{p-1}} \int_{B_{r}} v^{-n}\,\abs*{\zeta}^p \, dx \leq C \, C_6 \, r^{\frac{p\left(2-p\right)}{p-1}} \, \mathscr{G}_{p},
	\end{align}
	for some~$C>0$ depending only on~$n$,~$p$, and~$\norma*{\kappa}_{L^\infty(\R^n)}$. Thus, from~\eqref{eq:norma-p-grad->2} and~\eqref{eq:norma-p-1-<2}, we read
	\begin{equation}
	\label{eq:norma-p-grad}
		\int_{B_{r}} v^{-n}\,\abs*{\nabla \!\left(v-\mathsf{Q}\right)}^{p} \, dx \leq C_7 \, r^{\frac{p\left(2-p\right)_+}{p-1}} \, \mathscr{G}_{\min\left\{p,\frac{p}{p-1}\right\}},
	\end{equation}
	for some~$C_7>0$ depending only on~$n$,~$p$, and~$\norma*{\kappa}_{L^\infty(\R^n)}$.
	
	By~\eqref{eq:bounds-v} and the fact that~$x_0 \in B_{\mathscr{R}}$, we easily deduce that
	\begin{gather}
	\label{eq:bounds-v-2}
		\hat{c}_1 \left(1+\abs*{x-x_0}\right)^\frac{p}{p-1} \leq v(x) \leq \widehat{C}_1 \left(1+\abs*{x-x_0}\right)^\frac{p}{p-1} \quad\mbox{for every } x \in \R^n, \\
	\label{eq:bounds-v-3}
		\hat{c}_1 \left(1+\abs*{x}\right)^\frac{p}{p-1} \leq v(x) \leq \widehat{C}_1 \left(1+\abs*{x}\right)^\frac{p}{p-1} \quad\mbox{for every } x \in \R^n,
	\end{gather}
	for a couple of constants~$\hat{c}_1, \widehat{C}_1>0$ depending only on~$n$,~$p$, and~$\norma*{\kappa}_{L^\infty(\R^n)}$.
	
	Let~$\sigma>p-1$ to be determined soon. Since~$v(x_0)=\mathsf{Q}(x_0)$, the fundamental theorem of calculus and Jensen inequality yield
	\begin{align}
	\label{eq:norma-p-func-1}
		\int_{B_{r}} v^{-\sigma}(x)\,\abs*{\left(v-\mathsf{Q}\right)\!(x)}^{p} \, dx &= \int_{B_{r}} v^{-\sigma}(x) \,\abs*{\left(v-\mathsf{Q}\right)\!(x)-\left(v-\mathsf{Q}\right)\!(x_0)}^{p}\, dx \\
	\notag
		&= \int_{B_{r}} v^{-\sigma}(x)\,\abs*{\int_{0}^{1} \frac{d}{dt} \left(v-\mathsf{Q}\right)\!\left(tx+(1-t)x_0\right) \, dt}^{p} dx \\
	\notag
		&\leq \int_{B_{r}}  v^{-\sigma}(x)\,\abs*{x-x_0}^p \int_{0}^{1} \,\abs*{ \nabla \!\left(v-\mathsf{Q}\right)\!\left(tx+(1-t)x_0\right)}^{p} \, dt \, dx.
	\end{align}
	By~\eqref{eq:bounds-v-2}, we also have
	\begin{align*}
		v^{-\sigma}(x)\,\abs*{x-x_0}^p &\leq C  \left(1+\abs*{x-x_0}\right)^{-\sigma\frac{p}{p-1}+p}  \left(1+\abs*{x-x_0}\right)^{-p} \abs*{x-x_0}^p \\
		&\leq  C \left(1+\abs*{x-x_0}\right)^{-\sigma\frac{p}{p-1}+p},
	\end{align*}
	for some~$C>0$ depending only on~$n$,~$p$, and~$\norma*{\kappa}_{L^\infty(\R^n)}$, therefore
	\begin{align}
	\label{eq:norma-p-func-2}
		\int_{B_{r}} &\, v^{-\sigma}(x)\,\abs*{x-x_0}^p \int_{0}^{1} \,\abs*{ \nabla \!\left(v-\mathsf{Q}\right)\!\left(tx+(1-t)x_0\right)}^{p} \, dt \, dx \\
	\notag
		&\leq C \int_{B_{r}} \int_{0}^{1} \left(1+\abs*{x-x_0}\right)^{-\sigma\frac{p}{p-1}+p} \,\abs*{\nabla \!\left(v-\mathsf{Q}\right)\!\left(tx+(1-t)x_0\right)}^{p} \, dt \, dx.
	\end{align}
	On the other hand, by taking advantage of the fact that~$x_0 \in B_{\mathscr{R}}$, one can directly verify that
	\begin{equation*}
		\frac{1+\abs*{x-x_0}}{1+\abs*{tx+(1-t)x_0}} \geq \frac{1}{1+\abs*{x_0}} \geq \frac{1}{1+\mathscr{R}} \quad\mbox{for every } t \in [0,1].
	\end{equation*}
	This, together with~\eqref{eq:bounds-v-3},~\eqref{eq:norma-p-func-1},~\eqref{eq:norma-p-func-2}, and Tonelli’s theorem, gives
	\begin{align}
	\label{eq:norma-p-func-3}
		\int_{B_{r}} &\, v^{-\sigma}(x)\,\abs*{\left(v-\mathsf{Q}\right)\!(x)}^{p} \, dx \\
	\notag
		&\leq C \int_{B_{r}} \int_{0}^{1} \left(1+\abs*{tx+(1-t)x_0}\right)^{-\sigma\frac{p}{p-1}+p} \,\abs*{\nabla \!\left(v-\mathsf{Q}\right)\!\left(tx+(1-t)x_0\right)}^{p} \, dt \, dx \\
	\notag
		&\leq C \int_{0}^{1} \int_{B_{r}} \left(1+\abs*{y}\right)^{-\frac{p}{p-1}\left(\sigma-p+1\right)} \,\abs*{\nabla \!\left(v-\mathsf{Q}\right)\!\left(y\right)}^{p} \, dy \, dt \\
	\notag
		&\leq C \int_{B_{r}} v^{-\left(\sigma-p+1\right)}(y) \,\abs*{\nabla \!\left(v-\mathsf{Q}\right)\!\left(y\right)}^{p} \, dy,
	\end{align}
	for some~$C>0$ depending only on~$n$,~$p$,~$\norma*{\kappa}_{L^\infty(\R^n)}$, and~$\sigma$. By choosing~$\sigma=n+p-1$ in~\eqref{eq:norma-p-func-3} and using~\eqref{eq:norma-p-grad}, we deduce
	\begin{equation}
	\label{eq:norma-p-func}
		\int_{B_{r}} v^{-n-p+1}\,\abs*{\left(v-\mathsf{Q}\right)}^{p} \, dx \leq C_8 \, r^{\frac{p\left(2-p\right)_+}{p-1}} \, \mathscr{G}_{\min\left\{p,\frac{p}{p-1}\right\}},
	\end{equation}
	for some~$C_8>0$ depending only on~$n$,~$p$, and~$\norma*{\kappa}_{L^\infty(\R^n)}$.
	
	Note that~\eqref{eq:norma-p-grad} and~\eqref{eq:norma-p-func} imply that~$\mathsf{Q}$ approximates~$v$ both at the zero and first order in an appropriate sense. However, when inverting~$\mathsf{Q}$ through the transformation in~\eqref{eq:defv}, to go back to~$u$, the result is not a~$p$-bubble of the form~\eqref{eq:pbubb}, but only a function belonging to the manifold of Talenti bubbles. This suggests that a further approximation is required.
	
	To this purpose, we define
	\begin{equation*}
		\mathcal{Q}(x) \coloneqq \frac{\lambda^\frac{p}{p-1}+\abs*{x-x_0}^\frac{p}{p-1}}{\lambda^\frac{1}{p-1} \, n^\frac{1}{p} \left(\frac{n-p}{p-1}\right)^{\!\!\frac{p-1}{p}}} \quad\mbox{with}\quad \lambda = \frac{1}{\overline{P}} \left(\frac{p}{p-1}\right)^{\! p-1} n^\frac{1}{p} \left(\frac{n-p}{p-1}\right)^{\!\!- \frac{(p-1)^2}{p}}\!.
	\end{equation*}
	A direct computation reveals that
	\begin{equation}
	\label{eq:diffQ}
		\nabla \mathcal{Q} = \nabla \mathsf{Q} \quad\mbox{in } \R^n,
	\end{equation}
	therefore~\eqref{eq:norma-p-grad} holds with~$\mathcal{Q}$ in place of~$\mathsf{Q}$, that is
	\begin{equation}
	\label{eq:norma-p-grad-2}
		\int_{B_{r}} v^{-n}\,\abs*{\nabla \!\left(v-\mathcal{Q}\right) }^{p} \, dx \leq C_7 \, r^{\frac{p\left(2-p\right)_+}{p-1}} \, \mathscr{G}_{\min\left\{p,\frac{p}{p-1}\right\}},
	\end{equation}
	Moreover, we have
	\begin{equation}
	\label{eq:diff-per-Linf}
		\mathsf{Q}(x) - \mathcal{Q}(x) = v(x_0) - \frac{1}{\overline{P} }\left(\frac{p}{n-p}\right)^{\! p-1} \quad\mbox{for every } x \in \R^n,
	\end{equation}
	and we shall now show that the right-hand side is quantitatively small. To this aim, we compute
	\begin{align*}
		 v(x_0) \overline{P} &- \left(\frac{p}{n-p}\right)^{\! p-1} \\
		 &= \dashint_{B_\mathsf{t}(x_0)} \frac{n(p-1)}{p} \frac{v(x_0)}{v} \,\abs*{\nabla v-\nabla v(x_0)}^p + \left(\frac{p}{n-p}\right)^{\! p-1} \frac{v(x_0)-v}{v} \, dx.
	\end{align*}
	Thus, by exploiting~\eqref{eq:regv},~\eqref{eq:bound-gradv}, as well as the fact that~$v$ is bounded below, and arguing as in the deduction of~\eqref{eq:est2-media}, we obtain
	\begin{equation}
	\label{eq:to-mult}
		\abs*{v(x_0) \overline{P} - \left(\frac{p}{n-p}\right)^{\! p-1}} \leq C \,\mathsf{t}^{\alpha p},
	\end{equation}
	for some~$C>0$ depending only on~$n$,~$p$, and~$\norma*{\kappa}_{L^\infty(\R^n)}$. Moreover, since we have already proven that~$x_0 \in B_{\mathscr{R}}$, with~$\mathscr{R}$ defined in~\eqref{eq:defR-rad}, the bounds~\eqref{eq:bound-gradv} and~\eqref{eq:bound-vx0} give that~$v \leq C$ in~$B_\mathsf{t}(x_0)$, for some~$C>0$ depending only on~$n$,~$p$, and~$\norma*{\kappa}_{L^\infty(\R^n)}$. As a consequence,
	\begin{equation}
	\label{eq:bound-mean-below}
		\overline{P} \geq \left(\frac{p}{n-p}\right)^{\! p-1} \dashint_{B_\mathsf{t}(x_0)}  v^{-1} \, dx \geq C,
	\end{equation}
	for some~$C>0$ depending only on~$n$,~$p$, and~$\norma*{\kappa}_{L^\infty(\R^n)}$. This, together with~\eqref{eq:P-Linf}, also implies that
	\begin{equation}
	\label{eq:bounds-lambda}
		\Lambda_1 \leq \lambda \leq \Lambda_2,
	\end{equation}
	for some~$\Lambda_1,\Lambda_2>0$ depending only on~$n$,~$p$, and~$\norma*{\kappa}_{L^\infty(\R^n)}$. In addition, dividing both sides of~\eqref{eq:to-mult} by~$\overline{P}$, and exploiting~\eqref{eq:bound-mean-below}, we conclude that
	\begin{equation*}
		\abs*{v(x_0) - \frac{1}{\overline{P}} \left(\frac{p}{n-p}\right)^{\! p-1}} \leq C \,\mathsf{t}^{\alpha p},
	\end{equation*}
	for some~$C>0$ depending only on~$n$,~$p$, and~$\norma*{\kappa}_{L^\infty(\R^n)}$. From the latter and~\eqref{eq:diff-per-Linf}, we read off
	\begin{equation*}
		\norma*{\mathsf{Q} - \mathcal{Q}}_{L^\infty(\R^n)} \leq C \,\mathsf{t}^{\alpha p},
	\end{equation*}
	which, in turn, implies
	\begin{equation}
	\label{eq:closQ}
		\norma*{\mathsf{Q} - \mathcal{Q}}_{L^p_\omega(\R^n)}^p \leq \norma*{\mathsf{Q} - \mathcal{Q}}_{L^\infty(\R^n)}^p \int_{\R^n} v^{-n-p+1} \, dx \leq C_9 \,\mathsf{t}^{\alpha p^2},
	\end{equation}
	for some~$C_9>0$ depending only on~$n$,~$p$, and~$\norma*{\kappa}_{L^\infty(\R^n)}$.

	Taking advantage of~\eqref{eq:norma-p-func} and~\eqref{eq:closQ}, we deduce
	\begin{align}
	\notag
		\int_{B_{r}} v^{-n-p+1} \,\abs*{v-\mathcal{Q}}^{p} \, dx
		&\leq 2^{p-1}  C_8 \, r^{\frac{p\left(2-p\right)_+}{2\left(p-1\right)}} \, \mathscr{G}_{\min\left\{p,\frac{p}{p-1}\right\}} + 2^{p-1} C_9 \,\mathsf{t}^{\alpha p^2} \\
	\label{eq:diff-vQ}
		&\leq C_{10} \left[ r^{\frac{p\left(2-p\right)_+}{p-1}} \, \mathscr{G}_{\min\left\{p,\frac{p}{p-1}\right\}} + \mathsf{t}^{\alpha p^2} \right] \!,
	\end{align}
	for some~$C_{10}>0$ depending only on~$n$,~$p$, and~$\norma*{\kappa}_{L^\infty(\R^n)}$.
	
	
	\subsection{Going back to~$u$.}
	\label{step:backtou}
	
	We are now ready to return to the level of~$u$ by defining the approximation for our original function and showing the smallness of the~$\mathcal{D}^{1,p}$-norm. Finally, we will adjust the parameter introduced so far in terms of~$\defi(u,\kappa)$.
	
	For this purpose, we define the function which approximates~$u$ by
	\begin{equation*}
		\mathcal{U} \coloneqq \mathcal{Q}^{-\frac{n-p}{p}},
	\end{equation*}
	inverting~$\mathcal{Q}$ through~\eqref{eq:defv}. In such a way,~$\mathcal{U}$ is a~$p$-bubble of the form~\eqref{eq:pbubb}, in particular~$\mathcal{U} = U_p[x_0,\lambda]$. Moreover, a direct computation reveals that
	\begin{equation*}
		\nabla u = \frac{p-n}{p} \, v^{-\frac{n}{p}} \,\nabla v \quad\mbox{and}\quad \nabla \mathcal{U} = \frac{p-n}{p} \, \mathcal{Q}^{-\frac{n}{p}} \,\nabla \mathcal{Q} \quad \mbox{in } \R^n.
	\end{equation*}
	Therefore, we see that
	\begin{multline}
	\label{eq:normp-toest-1}
			\norma*{\nabla \!\left(u-\mathcal{U}\right) }_{L^p(B_{r})}^p = \left(\frac{n-p}{p}\right)^{\! p} \int_{B_{r}} \,\abs*{v^{-\frac{n}{p}} \,\nabla v-\mathcal{Q}^{-\frac{n}{p}} \,\nabla \mathcal{Q}} \, dx \\
			\leq 2^{p-1} \left(\frac{n-p}{p}\right)^{\! p} \left\{\int_{B_{r}} v^{-n}  \,\abs*{\nabla \!\left(v-\mathcal{Q}\right)}^p \, dx + \int_{B_{r}} \,\abs*{v^{-\frac{n}{p}}-\mathcal{Q}^{-\frac{n}{p}}}^p \,\abs*{\nabla \mathcal{Q}}^p \, dx\right\} \!.
	\end{multline}
	We now aim to estimate both terms on the right hand side of~\eqref{eq:normp-toest-1}.
	
	By taking advantage of~\eqref{eq:diffQ} and~\eqref{eq:bounds-lambda}, we easily see that
	\begin{equation*}
		\mathcal{Q}^{-1} \,\abs*{\nabla\mathcal{Q}}^p \leq C,
	\end{equation*}
	for some~$C>0$ depending only on~$n$,~$p$, and~$\norma*{\kappa}_{L^\infty(\R^n)}$, from which
	\begin{align}
	\notag
		\abs*{v^{-\frac{n}{p}}-\mathcal{Q}^{-\frac{n}{p}}}^p \,\abs*{\nabla\mathcal{Q}}^p &= v^{-n} \mathcal{Q}^{-n} \,\abs*{v^{\frac{n}{p}}-\mathcal{Q}^{\frac{n}{p}}}^p \,\abs*{\nabla\mathcal{Q}}^p \\
	\label{eq:est-secnd-int}
		&\leq C v^{-n} \mathcal{Q}^{-n+1} \,\abs*{v^{\frac{n}{p}}-\mathcal{Q}^{\frac{n}{p}}}^p.
	\end{align}
	Then, the fundamental theorem of calculus and Jensen inequality entail
	\begin{align}
	\label{eq:est-secnd-int-1}
		\abs*{v^{\frac{n}{p}}-\mathcal{Q}^{\frac{n}{p}}}^p &= \abs*{\int_{0}^{1} \frac{d}{dt} \left(tv + (1-t)\mathcal{Q}\right)^{\frac{n}{p}} dt}^{p} \\
	\notag
		&\leq \left(\frac{n}{p}\right)^{\! p} \int_{0}^{1} \left(tv + (1-t)\mathcal{Q}\right)^{n-p} \, dt \,\abs*{v-\mathcal{Q}}^p.
	\end{align}
	In light of~\eqref{eq:bounds-v-2} and~\eqref{eq:bounds-lambda}, we have
	\begin{equation*}
		\hat{c}_2 \, v \leq \mathcal{Q} \leq \widehat{C}_2 \, v \quad\mbox{in } \R^n,
	\end{equation*}
	for a couple of constants~$\hat{c}_2, \widehat{C}_2>0$ depending only on~$n$,~$p$, and~$\norma*{\kappa}_{L^\infty(\R^n)}$. Therefore,
	\begin{align}
	\label{eq:est-secnd-int-2}
		v^{-n} \mathcal{Q}^{-n+1} \int_{0}^{1} \left(tv + (1-t)\mathcal{Q}\right)^{n-p} \, dt \,\abs*{v-\mathcal{Q}}^p &\leq 	v^{-n} \mathcal{Q}^{1-p} \,\abs*{v-\mathcal{Q}}^p \\
	\notag
		&\leq v^{-n-p+1} \,\abs*{v-\mathcal{Q}}^p \quad\mbox{on } \left\{v \leq \mathcal{Q}\right\}\!,
	\end{align}
	and
	\begin{align}
	\label{eq:est-secnd-int-3}
		v^{-n} \mathcal{Q}^{-n+1} \int_{0}^{1} \left(tv + (1-t)\mathcal{Q}\right)^{n-p} \, dt \,\abs*{v-\mathcal{Q}}^p &\leq 	v^{-p} \mathcal{Q}^{-n+1} \,\abs*{v-\mathcal{Q}}^p \\
	\notag
		&\leq \hat{c}_2^{1-n} \, v^{-n-p+1} \,\abs*{v-\mathcal{Q}}^p \quad\mbox{on } \left\{\mathcal{Q} \leq v\right\}\!.
	\end{align}
	Piecing~\eqref{eq:est-secnd-int}--\eqref{eq:est-secnd-int-3}, it follows that
	\begin{equation}
	\label{eq:normp-toest-3}
		\int_{B_{r}} \,\abs*{v^{-\frac{n}{p}}-\mathcal{Q}^{-\frac{n}{p}}}^p \,\abs*{\nabla \mathcal{Q}}^p \, dx \leq C \int_{B_{r}} v^{-n-p+1} \,\abs*{v-\mathcal{Q}}^{p} \, dx,
	\end{equation}
	for some constant~$C>0$ depending only on~$n$,~$p$, and~$\norma*{\kappa}_{L^\infty(\R^n)}$.
	
	Combining~\eqref{eq:normp-toest-1} with~\eqref{eq:norma-p-grad-2},~\eqref{eq:diff-vQ}, and~\eqref{eq:normp-toest-3}, we conclude that
	\begin{equation}
	\label{eq:D1p-int}
		\norma*{\nabla \!\left(u-\mathcal{U}\right) }_{L^p(B_{r})}^p \leq C_{11} \left[ r^{\frac{p\left(2-p\right)_+}{p-1}} \, \mathscr{G}_{\min\left\{p,\frac{p}{p-1}\right\}} + \mathsf{t}^{\alpha p^2} \right]\!,
	\end{equation} 
	for some~$C_{11}>0$ depending only on~$n$,~$p$, and~$\norma*{\kappa}_{L^\infty(\R^n)}$, with
	\begin{equation*}
		\mathscr{G}_{q} = r^{\frac{2np}{p-1}} \, \mathcal{C}_P \!\left(r,\tau\right)^q \left[1+ \mathcal{C}_P \!\left(r,\mathsf{t}\right)^q\right] \defi(u,\kappa)^{\frac{q}{2}}+ \tau^{\alpha q\left(p-1\right)}
	\end{equation*}
	and where~$\mathcal{C}_P \!\left(r,\cdot\right)$ is defined in~\eqref{eq:def-CP}.
	
	Moreover, by~\eqref{eq:cond-r}, it holds that~$B_1(x_0) \subseteq B_{r}$, therefore, exploiting also~\eqref{eq:bounds-lambda}, we have
	\begin{equation*}
		\abs*{\nabla \mathcal{U}(x)} \leq C \,\abs*{x}^{\frac{1-n}{p-1}} \quad \mbox{for } x \in \R^n \setminus B_{r},
	\end{equation*}
 	for some~$C>0$ depending only on~$n$,~$p$, and~$\norma*{\kappa}_{L^\infty(\R^n)}$. The latter and~\eqref{eq:bound-gradu} imply
 	\begin{equation}
 	\label{eq:D1p-ext}
 		\norma*{\nabla \!\left(u-\mathcal{U}\right) }_{L^p(\R^n \setminus B_{r})}^p \leq C \int_{\R^n \setminus B_{r}} \,\abs*{x}^{-\frac{p}{p-1} (n-1)} \, dx \leq C \, r^{-\frac{n-p}{p-1}},
 	\end{equation}
 	for some~$C>0$ depending only on~$n$,~$p$, and~$\norma*{\kappa}_{L^\infty(\R^n)}$.
 	
 	We finish this step by adjusting the parameters. We choose
 	\begin{align*}
 		r&\coloneqq \defi(u,\kappa)^{-\min\left\{\frac{q}{4\mathfrak{p}},\frac{\alpha q\left(p-1\right)^2}{16\left(n-1\right)p\left(2-p\right)_+}\right\}} = \defi(u,\kappa)^{-\mathsf{m}}, \\
 		\mathsf{t}=\tau&\coloneqq \defi(u,\kappa)^{\frac{1}{8(n-1)}},
 	\end{align*}
 	where
 	\begin{equation*}
 		q \coloneqq \min\left\{p,\frac{p}{p-1}\right\} \quad\mbox{and}\quad \mathfrak{p} \coloneqq 2nq+\frac{p}{p-1}\left[2n+\left(2-p\right)_+\right].
 	\end{equation*}
 	Thus, by summing~\eqref{eq:D1p-int} with~\eqref{eq:D1p-ext}, we deduce
 	\begin{equation*}
 		\norma*{\nabla \!\left(u-\mathcal{U}\right) }_{L^p(\R^n)} \leq \mathscr{C} \defi(u,\kappa)^{\vartheta},
 	\end{equation*}
 	some~$\mathscr{C}>0$, depending only on~$n$,~$p$, and~$\norma*{\kappa}_{L^\infty(\R^n)}$, and some~$\vartheta \in (0,1)$, depending only on~$n$ and~$p$. In addition, we observe that the above requirements on the parameters, in particular~\eqref{eq:cond-r}, are verified provided that
 	\begin{equation*}
 		\gamma \leq \min\left\{\frac{1}{2}, \delta, \left(\mathscr{R}+2\right)^{\!-\frac{1}{\mathsf{m}}} \right\},
 	\end{equation*}
 	where~$\gamma$ and~$\delta$ verifies~\eqref{eq:def-small} and~\eqref{eq:delta-to-fix}, respectively.

	
	\subsection{Conclusion.}
	\label{step:conclusion} 
	
	Here, we need to restore the subscript, as announced at the end of~\ref{step:struwe}, and therefore write~$u_\epsilon$ instead of~$u$.
	
	In~\ref{step:backtou}, we have proven that
	\begin{equation}
	\label{eq:conclusion}
		\norma*{\nabla \!\left(u_\epsilon-\mathcal{U}\right) }_{L^p(\R^n)} \leq \mathscr{C} \defi(u,\kappa)^{\vartheta}.
	\end{equation}
	
 	Recall that we set~$u_\epsilon = T_{-z_\epsilon/\lambda_\epsilon,\lambda_\epsilon} \!\left(u\right)$. Hence, point~\ref{it:inver} of Lemma~\ref{lem:sym} implies that~$u = T_{z_\epsilon,\lambda_\epsilon} \!\left(u_\epsilon\right)$. Defining~$\mathcal{U}_0 \coloneqq T_{z_\epsilon,\lambda_\epsilon} \!\left(\mathcal{U}\right)$, which remains a~$p$-bubble by point~\ref{it:stillbub} of Lemma~\ref{lem:sym},~\eqref{eq:conclusion} reads as
 	\begin{equation*}
 		\norma*{\nabla \!\left(u-\mathcal{U}_0\right) }_{L^p(\R^n)} \leq \mathscr{C} \defi(u,\kappa)^{\vartheta},
 	\end{equation*}
 	which is the desired conclusion.
 	
 	
 	\section{On the optimal stability result}
 	\label{sec:sharp-res}
	In the forthcoming paper~\cite{liu-zhang}, the authors prove that for any non-negative~$u \in \mathcal{D}^{1,p}(\R^n)$ satisfying~\eqref{eq:ipotesi-energ}, there exist a large constant~$C \geq 1$ and a~$p$-bubble~$\mathcal{U}_0$, of the form~\eqref{eq:pbubb}, such that the estimate
	\begin{equation}
 	\label{eq:ideal-est}
 		\norma*{u-\mathcal{U}_0}_{\mathcal{D}^{1,p}(\R^n)} \leq C \,\norma*{\Delta_p u + u^{\past-1}}_{\mathcal{D}^{-1,p'}\!(\R^n)}^{\min\left\{1,\frac{1}{p-1}\right\}} 
 	\end{equation}
 	holds. Hereafter,~$\mathcal{D}^{-1,p'}\!(\R^n)$ denotes the dual space of~$\mathcal{D}^{1,p}(\R^n)$.

 	Truthfully, the estimate in~\eqref{eq:ideal-est} is stronger than that in~\eqref{eq:close-toabub}. Indeed, by Sobolev embedding and duality, we have
 	\begin{equation*}
 		L^{(\past)'}\!(\R^n) \longhookrightarrow \mathcal{D}^{-1,p'}\!(\R^n),
 	\end{equation*}
 	which allows us to control the norm on the right-hand side of~\eqref{eq:ideal-est} via~$\defi(u,\kappa)$ for a solution to~\eqref{eq:maineq-bubb}. Moreover, the proof of Theorem~\ref{th:main-bubbles} fundamentally relies on the fact that~$u$ is a solution to~\eqref{eq:maineq-bubb}. \newline
 	
 	In the remainder of this section, we provide an example which shows the optimality of~\eqref{eq:ideal-est} for~$1<p<2$. To this aim, we adapt to our setting the approach used in~\cite{cfm} to show the optimality of the result therein -- see~\cite[Remark~1.2]{cfm}. Essentially, we construct a suitable perturbation of a~$p$-bubble by adding a small cut-off function, which does not significantly perturb the energy and ensures that the new function remains close to being a solution of~\eqref{eq:eqcritica}.

 	Let us fix~$U \coloneqq U_p[0,1]$ and consider a non-negative, radially symmetric, and radially decreasing cut-off function~$\phi \in C^\infty_c(B_1)$. For~$\varepsilon \in (0,1)$, define
 	\begin{equation*}
 		u_\varepsilon \coloneqq U +\varepsilon\phi,
 	\end{equation*}
 	which is a small, non-negative perturbation of~$U$. Since~$\phi$ is radial with respect to the origin, it easily follows that~$\mathcal{Z}_{u_\varepsilon} = \left\{0\right\}$ and that~$u_\varepsilon \in C^\infty(\R^n \setminus \left\{0\right\}) \cap C^1(\R^n)$.
 	
 	We shall show that~$u_\varepsilon \in \mathcal{D}^{1,p}(\R^n)$ verifies~\eqref{eq:ipotesi-energ} provided~$\varepsilon$ is sufficiently small, and we will provide precise estimates for both sides of~\eqref{eq:ideal-est}.
 	
 	We start by noticing that
 	\begin{equation*}
 		\nabla U (x) = -\abs*{\nabla U (x)} \,\frac{x}{\abs*{x}} \quad\mbox{and}\quad \nabla \phi (x) = -\abs*{\nabla \phi (x)} \,\frac{x}{\abs*{x}} \quad\mbox{for every } x \in \R^n \setminus \{0\},
 	\end{equation*}
 	therefore
 	\begin{align}
 	\notag
 		\abs*{\nabla u_\varepsilon}^2 &= \abs*{\nabla U}^2 +2\varepsilon \left\langle\nabla U, \nabla \phi \right\rangle + \varepsilon^2 \,\abs*{\nabla \phi}^2 = \abs*{\nabla U}^2 +2\varepsilon \,\abs*{\nabla U}\abs*{\nabla \phi} + \varepsilon^2 \,\abs*{\nabla \phi}^2 \\
 	\label{eq:grad-uep}
 		&= \left(\abs*{\nabla U}+\varepsilon\,\abs*{\nabla \phi}\right)^2 \quad\mbox{in } \R^n,
 	\end{align}
 	since the latter holds trivially at the origin. Notice that~\eqref{eq:grad-uep} is geometrically obvious, as~$\nabla U$ and~$\nabla \phi$ are parallel at each point and both vanish at the origin, thanks to our definition of~$\phi$. By H\"older inequality
 	\begin{equation*}
 		\int_{\R^n} \,\abs*{\nabla U}^{p-1} \abs*{\nabla \phi} \, dx \leq \norma*{\nabla U}_{L^p(\R^n)}^{p-1} \norma*{\nabla \phi}_{L^p(\R^n)},
 	\end{equation*}
 	hence a Taylor expansion, together with~\eqref{eq:energ-bubb} and~\eqref{eq:grad-uep}, yields
 	\begin{equation*}
 		S^n = \int_{\R^n} \,\abs*{\nabla U}^{p} \, dx \leq \int_{\R^n} \,\abs*{\nabla u_\varepsilon}^{p} \, dx = \int_{\R^n} \,\abs*{\nabla U}^{p} \, dx + O(\varepsilon) = S^n + O(\varepsilon).
 	\end{equation*}
 	As a result,~$u_\varepsilon$ satisfies~\eqref{eq:ipotesi-energ} provided~$\varepsilon$ is small enough.
 	
 	By taking~$\mathcal{U}_0=U$, for the left-hand side of~\eqref{eq:ideal-est} we trivially have
 	\begin{equation}
 	\label{eq:norma-lhs}
 		\norma*{u_\varepsilon-U}_{\mathcal{D}^{1,p}(\R^n)} = \varepsilon \,\norma*{\nabla\phi}_{L^{p}(\R^n)}.
 	\end{equation}
 	We then claim that
 	\begin{equation}
 	\label{eq:norma-D-1}
 		\norma*{\Delta_p u_\varepsilon + u_\varepsilon^{\past-1}}_{\mathcal{D}^{-1,p'}\!(\R^n)} = O(\varepsilon).
 	\end{equation}
 	By duality and density, we have
 	\begin{align}
 	\label{eq:est-sup}
 		\norma*{\Delta_p u_\varepsilon + u_\varepsilon^{\past-1}}_{\mathcal{D}^{-1,p'}\!(\R^n)} &= \sup_{\overset{\eta \in \mathcal{D}^{1,p}(\R^n)}{\norma*{\nabla\eta}_{L^{p}(\R^n)} = 1}} \,\abs*{\left\langle \Delta_p u_\varepsilon + u_\varepsilon^{\past-1}, \eta\right\rangle} \\
 	\notag
 		&= \sup_{\overset{\eta \in C^\infty_c(\R^n)}{\norma*{\nabla\eta}_{L^{p}(\R^n)} = 1}} \,\abs*{\int_{\R^n} - \left\langle \abs*{\nabla u_\varepsilon}^{p-2} \,\nabla u_\varepsilon, \nabla\eta\right\rangle + u_\varepsilon^{\past-1} \eta \, dx},
 	\end{align}
 	and we notice that, by testing~\eqref{eq:eqcritica} with~$\eta \in C^\infty_c(\R^n)$, we can write
 	\begin{multline}
 	\label{eq:sup-spezzato}
 		\int_{\R^n} - \left\langle \abs*{\nabla u_\varepsilon}^{p-2} \,\nabla u_\varepsilon, \nabla\eta\right\rangle + u_\varepsilon^{\past-1} \eta \, dx \\
 		= \int_{\R^n} \left\langle \abs*{\nabla U}^{p-2} \,\nabla U - \abs*{\nabla u_\varepsilon}^{p-2} \,\nabla u_\varepsilon , \nabla\eta\right\rangle + \left(u_\varepsilon^{\past-1} - U^{\past-1}\right) \eta \, dx.
 	\end{multline}
 	We shall now estimate both terms on the right-hand side of~\eqref{eq:sup-spezzato}.
 	
 	For the second term, H\"older and Sobolev inequalities entail that
 	\begin{equation*}
 			\int_{\R^n} U^{\past-2} \phi \,\abs*{\eta} \, dx \leq \norma*{U}_{L^{\past}\!(\R^n)}^{\past-2} \norma*{\phi}_{L^{\past}\!(\R^n)} \norma*{\eta}_{L^{\past}\!(\R^n)} \leq S^{-2} \norma*{U}_{L^{\past}\!(\R^n)}^{\past-2} \norma*{\nabla\phi}_{L^{p}(\R^n)}
 	\end{equation*}
 	for every~$\eta \in C^\infty_c(\R^n)$ with~$\norma*{\nabla\eta}_{L^{p}(\R^n)}= 1$. Consequently, a Taylor expansion gives that
 	\begin{equation}
 	\label{eq:est-2ndterm-sup}
 		\sup_{\overset{\eta \in C^\infty_c(\R^n)}{\norma*{\nabla\eta}_{L^{p}(\R^n)} = 1}} \,\abs*{\int_{\R^n} \left(u_\varepsilon^{\past-1} - U^{\past-1}\right) \eta \, dx} = O(\varepsilon).
 	\end{equation}
 
 	Exploiting the well-known inequality -- see, for instance,~\cite[Lemma~2.1]{dam-comp} --
 	\begin{equation*}
 		\abs*{\abs*{b}^{p-2} \, b - \abs*{a}^{p-2} \, a} \leq C_p \left(\abs*{b}+\abs*{a}\right)^{p-2} \abs*{a-b} \quad \mbox{for every } a,b \in \R^n \mbox{ with } \abs*{b}+\abs*{a}>0,
 	\end{equation*}
 	we can estimate the first term on the right-hand side of~\eqref{eq:sup-spezzato} deducing
 	\begin{equation*}
 		\abs*{\int_{\R^n}\left\langle \abs*{\nabla U}^{p-2} \,\nabla U - \abs*{\nabla u_\varepsilon}^{p-2} \,\nabla u_\varepsilon , \nabla\eta\right\rangle dx} \leq C_p \int_{\R^n} \left(\abs*{\nabla u_\varepsilon}+\abs*{\nabla U}\right)^{p-2} \varepsilon\,\abs*{\nabla\phi} \abs*{\nabla\eta} \, dx
 	\end{equation*}
 	whence, by means of H\"older inequality, we get
 	\begin{equation*}
 		\int_{\R^n} \left(\abs*{\nabla u_\varepsilon}+\abs*{\nabla U}\right)^{p-2} \varepsilon\,\abs*{\nabla\phi} \abs*{\nabla\eta} \, dx \leq \varepsilon \norma*{\nabla\phi}_{L^{p}(\R^n)} \norma*{\abs*{\nabla u_\varepsilon}+\abs*{\nabla U}}_{L^{p}(\R^n)}^{p-2}
 	\end{equation*}
 	for every~$\eta \in C^\infty_c(\R^n)$ with~$\norma*{\nabla\eta}_{L^{p}(\R^n)}= 1$. Taking into account that both~$u_\varepsilon$ and~$U$ satisfy~\eqref{eq:ipotesi-energ} for~$\varepsilon$ small enough, we conclude that
 	\begin{equation}
 	\label{eq:est-1stterm-sup}
 		\sup_{\overset{\eta \in C^\infty_c(\R^n)}{\norma*{\nabla\eta}_{L^{p}(\R^n)} = 1}} \,\abs*{\int_{\R^n} \left\langle \abs*{\nabla U}^{p-2} \,\nabla U - \abs*{\nabla u_\varepsilon}^{p-2} \,\nabla u_\varepsilon , \nabla\eta\right\rangle dx} = O(\varepsilon).
 	\end{equation}
 	As a result,~\eqref{eq:est-sup}--\eqref{eq:est-1stterm-sup} imply the validity of~\eqref{eq:norma-D-1}.
 	
 	Finally, assuming that~\eqref{eq:ideal-est} holds for some exponent $\vartheta$, the estimates~\eqref{eq:norma-lhs} and~\eqref{eq:norma-D-1} lead to a contradiction unless~$0<\vartheta \leq 1$. 
	
 	
 	\section{An application to quasi-symmetry}
 	\label{sec:application}
 	
 	In the recent paper~\cite{plap}, extending the work in~\cite{ccg}, the second author proved quasi-symmetry for non-negative, non-trivial weak solutions to~\eqref{eq:maineq-bubb} using a quantitative version of the moving plane method. Unfortunately, the dependence on the deficit, which is precisely~\eqref{eq:def_cfm}, is of logarithmic-type due to technical issues in the propagation of the weak Harnack inequality -- see also~\cite{ccg}.
 	
 	Here, we show that by exploiting Corollary~\ref{cor:main-bubbles} and the analytic tools developed in~\cite{ds-har} and further analyzed in~\cite{plap}, we can derive an approximate symmetry result for~\eqref{eq:maineq-bubb} under perhaps more natural assumptions than those of Theorem~1.3 in~\cite{plap} and with a power-like dependence on the deficit~\eqref{eq:def_cfm}. Specifically, we will prove the following result.
 	
 	\begin{theorem}
 		\label{th:qsym-consequence}
 		Let~$n \in \N$ be an integer,~$2<p<n$, and let~$\kappa \in L^\infty(\R^n) \cap C^{1,1}_{\loc}(\R^n)$ be a positive function. Let~$u \in \mathcal{D}^{1,p}(\R^n)$ be a positive weak solution to~\eqref{eq:maineq-bubb} satisfying~\eqref{eq:ip-energ-k0}, where~$\kappa_0(u)$ is defined in~\eqref{eq:defk0}, and the bound
 		\begin{equation}
 			\label{eq:univ-bound-u-Linf}
 			\norma*{u}_{L^\infty(\R^n)} \leq C_0,
 		\end{equation}
 		for some~$C_0 \geq 1$.
 		
 		Then, there exist a point~$\mathcal{O} \in \R^n$, a large constant~$C>0$, and a constant~$\vartheta' \in (0,1)$ such that
 		\begin{equation}
 			\label{eq:quasisym-point-est-toprove}
 			\abs*{u(x)-u(y)} \leq C \defi(u,\kappa)^{\vartheta'}
 		\end{equation}
 		for every~$x,y \in \R^n$ satisfying~$\abs*{x-\mathcal{O}}=\abs*{y-\mathcal{O}}$. Moreover, if~$u_\Theta$ denotes any rotation of~$u$ centered at~$\mathcal{O}$, we also have
 		\begin{equation}
 			\label{eq:quasisym-D1p-est-toprove}
 			\norma*{u-u_\Theta}_{\mathcal{D}^{1,p}(\R^n)} 
 			\leq C \defi(u,\kappa)^{\vartheta'}.
 		\end{equation}
 		The constant~$C$ depends only on~$n$,~$p$,~$C_0$,~$\norma*{\kappa}_{L^\infty(\R^n)}$, and~$\kappa_0(u)$, while~$\vartheta'$ depends only on~$n$ and~$p$.
 	\end{theorem}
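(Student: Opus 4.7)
The plan is to deploy Corollary~\ref{cor:main-bubbles} to locate a $p$-bubble $\mathcal{U}$ close to $u$ in $\mathcal{D}^{1,p}(\R^n)$, take its center $\mathcal{O}$ as the quasi-center of symmetry, and then transfer the quantitative closeness of $u$ to $\mathcal{U}$ into both of the claimed estimates. Applying Corollary~\ref{cor:main-bubbles} under~\eqref{eq:ip-energ-k0} produces $\mathcal{O} \in \R^n$ and $\lambda > 0$ such that $\mathcal{U} := \kappa_0(u)^{\frac{1}{p-\past}} U_p[\mathcal{O},\lambda]$ satisfies
\begin{equation*}
    \norma*{u - \mathcal{U}}_{\mathcal{D}^{1,p}(\R^n)} \leq C \defi(u,\kappa)^\vartheta,
\end{equation*}
with $C = C(n,p,\norma*{\kappa}_{L^\infty(\R^n)},\kappa_0(u))$ and $\vartheta = \vartheta(n,p) \in (0,1)$.

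The estimate~\eqref{eq:quasisym-D1p-est-toprove} is then essentially immediate: since $\mathcal{U}$ is radially symmetric about $\mathcal{O}$ we have $\mathcal{U}_\Theta = \mathcal{U}$ for every rotation $\Theta$ centered at $\mathcal{O}$, and the $\mathcal{D}^{1,p}$-norm is rotation-invariant; a triangle inequality therefore gives $\norma*{u - u_\Theta}_{\mathcal{D}^{1,p}(\R^n)} \leq 2 \norma*{u - \mathcal{U}}_{\mathcal{D}^{1,p}(\R^n)}$, which yields~\eqref{eq:quasisym-D1p-est-toprove} with $\vartheta' = \vartheta$.

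For the pointwise estimate~\eqref{eq:quasisym-point-est-toprove}, the strategy is to upgrade the $\mathcal{D}^{1,p}$-closeness into an $L^\infty$-closeness via an interpolation with Hölder regularity. Sobolev embedding gives $\norma*{u - \mathcal{U}}_{L^{\past}\!(\R^n)} \leq C \defi(u,\kappa)^\vartheta$. On the other hand, the universal bound~\eqref{eq:univ-bound-u-Linf} on $u$, together with the explicit decay of the bubble $\mathcal{U}$, keeps the right-hand sides of~\eqref{eq:maineq-bubb} (for $u$) and of the corresponding equation for~$\mathcal{U}$ uniformly bounded; the local $C^{1,\alpha}$ regularity of DiBenedetto~\cite{diben} and Tolksdorf~\cite{tolk}, realized uniformly on unit balls, then provides uniform $C^{0,\alpha}(\R^n)$ estimates for $u - \mathcal{U}$, with $\alpha$ depending only on $n$ and $p$. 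A standard interpolation --- if $\abs*{(u - \mathcal{U})(x_0)} = M$, then Hölder continuity forces $\abs*{u - \mathcal{U}} \geq M/2$ on a ball of radius $\sim M^{1/\alpha}$ around $x_0$, and integrating against the $L^{\past}$-bound fixes $M$ --- yields
\begin{equation*}
    \norma*{u - \mathcal{U}}_{L^\infty(\R^n)} \leq C \defi(u,\kappa)^{\vartheta'}, \qquad \vartheta' := \frac{\past \alpha \vartheta}{n + \past \alpha} \in (0,1).
\end{equation*}
Since $\mathcal{U}(x) = \mathcal{U}(y)$ whenever $\abs*{x - \mathcal{O}} = \abs*{y - \mathcal{O}}$, the triangle inequality $\abs*{u(x) - u(y)} \leq 2 \norma*{u - \mathcal{U}}_{L^\infty(\R^n)}$ closes~\eqref{eq:quasisym-point-est-toprove}.

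The principal technical point is guaranteeing that the $C^{1,\alpha}$-estimate is truly uniform over $\R^n$, not merely local on compact sets. This is where~\eqref{eq:univ-bound-u-Linf} becomes decisive: combined with the translation-invariance of~\eqref{eq:maineq-bubb} it reduces the question to local regularity on unit balls around an arbitrary point, for which the analytic framework of~\cite{ds-har,plap} supplies the required Harnack- and Hölder-type estimates with constants depending only on the ingredients listed in the statement. The restriction $p > 2$ is expected to enter precisely at this step, through the specific form of the gradient Hölder estimates used to execute the interpolation globally and to realize the power-like exponent $\vartheta'$.
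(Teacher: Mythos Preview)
Your argument for the $\mathcal{D}^{1,p}$-estimate~\eqref{eq:quasisym-D1p-est-toprove} is identical to the paper's. For the pointwise estimate~\eqref{eq:quasisym-point-est-toprove}, however, you take a genuinely different and more elementary route: you interpolate between the $L^{\past}$-closeness (from Sobolev) and a uniform $C^{0,\alpha}$-bound on $u-\mathcal{U}$, whereas the paper rewrites~\eqref{eq:maineq-bubb} as a comparison equation $\Delta_p u + \kappa_0 c\,u + g = \Delta_p\mathcal{U} + \kappa_0 c\,\mathcal{U}$ with linearized coefficient $c$, and then invokes the local boundedness comparison inequality of~\cite[Theorem~2.3]{plap} (built on~\cite{ds-har}), which requires weighted integrability of $|\nabla\mathcal{U}|^{-(p-1)r}$ for $r\in\bigl(\tfrac{p-2}{p-1},1\bigr)$. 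That machinery is what forces the restriction $p>2$ in the paper's statement.

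Your interpolation argument, by contrast, only needs the standard DiBenedetto--Tolksdorf $C^{1,\alpha}$-estimates applied separately to $u$ and $\mathcal{U}$ on unit balls; these hold for every $1<p<n$, so your route would in fact \emph{remove} the restriction $p>2$. Your final paragraph misidentifies where $p>2$ enters: it is not needed for the H\"older interpolation at all, and the references to~\cite{ds-har,plap} are unnecessary for your approach --- DiBenedetto already suffices.

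There is one point you leave implicit that deserves a sentence: the uniform $C^{0,\alpha}$-bound on $\mathcal{U}$ requires the scale $\lambda$ of the bubble to be bounded away from zero. This is not furnished directly by Corollary~\ref{cor:main-bubbles}, but follows from~\eqref{eq:univ-bound-u-Linf} and the $L^{\past}$-closeness: if $\lambda$ were very small, a fixed fraction of $\int\mathcal{U}^{\past}$ would concentrate on a ball where $u\leq C_0$ contributes negligibly, forcing $\|u-\mathcal{U}\|_{L^{\past}}$ to be bounded below, contradicting small deficit. The paper faces (and uses) the same control on $\lambda$, citing the proof of Theorem~\ref{th:main-bubbles}.
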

 	
 	As mentioned, Theorem~\ref{th:qsym-consequence} should be compared with Theorem~1.3 in~\cite{plap}. Clearly, condition~\eqref{eq:univ-bound-u-Linf} is a necessary assumption to fix the scale of~$u$ and is present in both results -- explicitly here and implicitly in~\cite{plap} through the global decay assumption. The decay assumption in~\cite{plap} is replaced here by the energy bound~\eqref{eq:ipotesi-energ}. Both serve to prevent the occurrence of bubbling phenomena and are therefore needed in some form. Finally, apart from the improvement in the estimates, another novelty in Theorem~\ref{th:qsym-consequence} is that the lower bound for the gradient of~$u$ is no longer required. This condition was derived in~\cite{sciu} as part of the proof of the symmetry result and imposed a priori in~\cite{plap} for quantitative reasons. Here, however, this assumption is unnecessary since~\eqref{eq:quasisym-point-est-toprove} is obtained by comparing~$u$ with a multiple of a~$p$-bubble, whose gradient is explicitly known.
 	
 	\begin{proof}[Proof of Theorem~\ref{th:qsym-consequence}]
 		Let us set~$\kappa_0=\kappa_0(u)$. By applying Corollary~\ref{cor:main-bubbles}, there exists a function~$\mathcal{U} \in \mathcal{D}^{1,p}(\R^n)$ of the form
 		\begin{equation*}
 			\mathcal{U} = \kappa_0(u)^{\frac{1}{p-\past}} \,\mathcal{U}_0,
 		\end{equation*}
 		for some~$p$-bubble~$\mathcal{U}_0 \coloneqq U_p[\mathcal{O},\lambda]$, such that
 		\begin{equation}
 			\label{eq:est-quasisym-1}
 			\norma*{u-\mathcal{U}}_{\mathcal{D}^{1,p}(\R^n)} \leq C \defi(u,\kappa)^{\vartheta}.
 		\end{equation}
 		The constant~$C$ depends only on~$n$,~$p$,~$\norma*{\kappa}_{L^\infty(\R^n)}$, and~$\kappa_0$, while~$\vartheta$ depends only on~$n$ and~$p$. Moreover, as follows from the proof of Theorem~\ref{th:main-bubbles}, the scaling factor~$\lambda>0$ depends only on~$n$,~$p$, and~$\norma*{\kappa}_{L^\infty(\R^n)}$.
 		
 		From now on, the constant~$C$ will denote a quantity that may vary from line to line, but depends only on~$n$,~$p$,~$\norma*{\kappa}_{L^\infty(\R^n)}$, and~$\kappa_0$.
 		
 		Let~$\Theta$ be any rotation centered at~$\mathcal{O}$ and define~$u_\Theta(x) \coloneqq u(\Theta x)$ and~$\mathcal{U}_\Theta(x) \coloneqq \mathcal{U}(\Theta x)$ for every~$x \in \R^n$. Since~$\mathcal{U}$ is radially symmetric with respect to~$\mathcal{O}$ we clearly have that~$\mathcal{U}_\Theta=\mathcal{U}$. Therefore, a change of variables gives
 		\begin{equation*}
 			\norma*{u_\Theta-\mathcal{U}}_{\mathcal{D}^{1,p}(\R^n)} = \norma*{u_\Theta-\mathcal{U}_\Theta}_{\mathcal{D}^{1,p}(\R^n)} \leq C \defi(u,\kappa)^{\vartheta}.
 		\end{equation*}
 		Combining the latter with~\eqref{eq:est-quasisym-1} via the triangle inequality, we infer~\eqref{eq:quasisym-D1p-est-toprove}.
 		
 		We plan to prove~\eqref{eq:quasisym-point-est-toprove} by applying the local boundedness comparison inequality of Theorem~2.3 in~\cite{plap} -- see also~\cite[Theorem~3.2]{ds-har} for the original result. More precisely, we will show that
 		\begin{equation}
 			\label{eq:point-est-qiasisym-toprove}
 			\sup_{B_1(x)} \!\left(u - \mathcal{U}\right)_+ \leq C \defi(u,\kappa)^{\vartheta'} \quad\mbox{for every } x \in \R^n,
 		\end{equation}
 		for some~$\vartheta' \in (0,1)$ depending only on~$n$ and~$p$.
 		
 		To this end, we observe that
 		\begin{equation}
 			\label{eq:comparison-torewrite}
 			\Delta_p u + \kappa u^{\past-1} = \Delta_p \,\mathcal{U} + \kappa_0 \, \mathcal{U}^{\past-1} \quad \mbox{in } \R^n.
 		\end{equation}
 		Defining
 		\begin{equation*}
 			c (x) \coloneqq
 			\begin{dcases}
 				\frac{\mathcal{U}^{\past-1}(x)-u^{\past-1}(x)}{\mathcal{U}(x)-u(x)}	& \quad \mbox{if } \mathcal{U}(x) \neq u(x), \\
 				0													& \quad \mbox{if } \mathcal{U}(x) = u(x),
 			\end{dcases}
 		\end{equation*}
 		and setting~$g \coloneqq \left(\kappa-\kappa_0\right)u^{\past-1} \in L^{(\past)'}\!(\R^n) \cap L^\infty(\R^n)$, we can rewrite~\eqref{eq:comparison-torewrite} as
 		\begin{equation}
 			\label{eq:comparison-rewritten}
 			\Delta_p u + \kappa_0 \, c \, u + g = \Delta_p \,\mathcal{U} + \kappa_0 \, c \; \mathcal{U} \quad \mbox{in } \R^n.
 		\end{equation}
 		Now, we note that~$\norma*{\mathcal{U}}_{L^\infty(\R^n)} \leq C$ and also~$\norma*{\nabla\mathcal{U}}_{L^\infty(\R^n)} \leq C$. Moreover, by Theorem~1 in~\cite{diben} and assumption~\eqref{eq:univ-bound-u-Linf}, we deduce that~$\norma*{\nabla u}_{L^\infty(\R^n)} \leq C$. Consequently, we have
 		\begin{equation}
 			\label{eq:est-c-bubb}
 			c \in L^\infty(\R^n) \quad\mbox{with } 0 \leq c \leq \left(\past-1\right) \max\left\{u,\mathcal{U}\right\}^{\past-2} \leq C.
 		\end{equation}
 		
 		By~\eqref{eq:univ-bound-u-Linf}, we can establish~\eqref{eq:quasisym-point-est-toprove} assuming that~$\defi(u,\kappa) \leq \gamma$ for some~$\gamma \in (0,1)$ depending only on~$n$,~$p$,~$\norma*{\kappa}_{L^\infty(\R^n)}$, and~$\kappa_0$. Moreover, up to a translation, we may also assume that~$\mathcal{O}$ is the origin. Thus, by repeating the argument in~\ref{step:decay} of the proof of Theorem~\ref{th:main-bubbles}, we deduce that
 		\begin{equation}
 			u(x) \leq \frac{C}{1+\abs*{x}^\frac{n-p}{p-1}} \quad\mbox{for every } x \in \R^n.
 		\end{equation}
 		Now, we observe that
 		\begin{equation*}
 			\abs*{\nabla \mathcal{U}(x)} = C \left(\lambda^{\frac{p}{p-1}}+\abs*{x}^{\frac{p}{p-1}}\right)^{\!-\frac{n}{p}} \abs*{x}^{\frac{1}{p-1}} \quad\mbox{for every } x \in \R^n.
 		\end{equation*}
 		Fixing~$R_0=\lambda$ and~$R>0$, and setting~$B_R \coloneqq B_R(0)$, this enables us to conclude that
 		\begin{align}
 			\label{eq:int-peso-bubble}
 			\int_{B_{R_0}} \frac{1}{\abs*{\nabla \mathcal{U}}^{\left(p-1\right)r}} \, dx &\leq C, \\
 			\notag
 			\int_{B_R \setminus B_{R_0}} \frac{1}{\abs*{\nabla \mathcal{U}}^{\left(p-1\right)r}} \, dx &\leq C R^{n+(n-1)r} \quad\mbox{for every } r \in \left(\frac{p-2}{p-1},1\right)\!,
 		\end{align}
 		where~$C$ now depends on~$r$ as well.
 		
 		Since~\eqref{eq:comparison-rewritten}--\eqref{eq:int-peso-bubble} suffice to repeat the argument in Step~3 of the proof of Theorem~1.3 in~\cite{plap}, we infer the validity of~\eqref{eq:point-est-qiasisym-toprove}. Furthermore, applying the same reasoning to the function~$\mathcal{U}-u$, we obtain
 		\begin{equation*}
 			\sup_{B_1(x)} \!\left(\mathcal{U}-u\right)_+ \leq C \defi(u,\kappa)^{\vartheta'} \quad\mbox{for every } x \in \R^n,
 		\end{equation*}
 		possibly for a smaller~$\vartheta'$. The latter, together with~\eqref{eq:point-est-qiasisym-toprove}, entails
 		\begin{equation}
 			\label{eq:Linfty-u-bub}
 			\norma*{u-\mathcal{U}}_{L^\infty(\R^n)} \leq C \defi(u,\kappa)^{\vartheta'}.
 		\end{equation}
 		Since this argument also applies to the function~$u_\Theta-\mathcal{U}_\Theta$, we similarly get
 		\begin{equation*}
 			\norma*{u_\Theta-\mathcal{U}}_{L^\infty(\R^n)} = \norma*{u_\Theta-\mathcal{U}_\Theta}_{L^\infty(\R^n)} \leq C \defi(u,\kappa)^{\vartheta'},
 		\end{equation*}
 		possibly for a smaller~$\vartheta'$. Combining this with~\eqref{eq:Linfty-u-bub} via the triangle inequality, we establish~\eqref{eq:quasisym-point-est-toprove}. This concludes the proof.
 	\end{proof}
 	
 	
 	\section*{Acknowledgments} 
 	\noindent The authors have been partially supported by the “INdAM - GNAMPA Project”, CUP \#E5324001950001\# and by the Research Project of the Italian Ministry of University and Research (MUR) PRIN 2022 “Partial differential equations and related geometric-functional inequalities”, grant number 20229M52AS\_004.
	
	The authors thank Alessio Figalli for informing them that Gemei Liu and Yi Ru-Ya Zhang were simultaneously working on this project and for sharing a draft of~\cite{liu-zhang}.

	The authors are sincerely indebted to Carlo Alberto Antonini and Alberto Farina for sharing with them a previously unpublished manuscript -- coauthored with G.C.\ -- which already contained, in essence,~\ref{step:differ-id} and some computations from~\ref{step:approx} in the proof of Theorem~\ref{th:main-bubbles}. M.G.\ further extends his sincere thanks to Matteo Cozzi for valuable discussions on this paper's topic.
 	
 	Moreover, this work was completed while M.G.\ was visiting the Institut f\"ur Mathematik at Goethe-Universit\"at, Frankfurt am Main, whose kind hospitality is gratefully acknowledged.

 	
 	\bigskip

\end{document}